  \newcommand\ma[1]{{\color{black} #1}}
 \newcommand\ibt[1]{{\color{black} #1}}
\let\oldtocsection=\tocsection
\let\oldtocsubsection=\tocsubsection 
\let\oldtocsubsubsection=\tocsubsubsection
\renewcommand{\tocsection}[2]{\vspace{0.5em}\hspace{0em}\oldtocsection{#1}{#2}}
\renewcommand{\tocsubsection}[2]{\vspace{0.5em}\hspace{1em}\oldtocsubsection{#1}{#2}}
\renewcommand{\tocsubsubsection}[2]{\vspace{0.5em}\hspace{2em}\oldtocsubsubsection{#1}{#2}}
\DeclareMathOperator{\supp}{supp}
\newtheorem{theoreme}{Theorem}
\newtheorem{rem}[theoreme]{Remark}
\theoremstyle{definition}
\numberwithin{equation}{section}
 \renewenvironment{proof}{{\bfseries \noindent Proof.}}{\demo}
\newcommand\xqed[1]{%
  \leavevmode\unskip\penalty9999 \hbox{}\nobreak\hfill
  \quad\hbox{#1}}
\newcommand\demo{\xqed{$\square$}}
\newcommand{\nonamethmname}{}
\newenvironment{genthm*}[1]
 {\renewcommand{\nonamethmname}{#1}\nonamethmcheck}
 {\endnonamethm}
\newcommand\nonamethmcheck[1][]{%
  \if\relax\detokenize{#1}\relax
    \nonamethm\relax
  \else
    \nonamethm[#1]%
  \fi
  \mbox{}%
}
\def\R{\mathbb R}
\def\la {{\lambda}}
\newcommand {\nc}   {\newcommand}
\nc {\be}   {\begin{equation}} \nc {\ee}   {\end{equation}} \nc
\nc {\eeq}  {\end{eqnarray}} \nc {\beqs}
\nc {\eeqs} {\end{eqnarray*}}
\def\edc{\end{document}}
\providecommand{\abs}[1]{\lvert#1\rvert}
\newcounter{dummy} 
\numberwithin{dummy}{section}
\newtheorem{Theorem}[dummy]{Theorem}
\newtheorem{Lemma}[dummy]{Lemma}
\newtheorem{Proposition}[dummy]{Proposition}
\newtheorem{Remark}[dummy]{Remark}
\newtheorem{Hypothesis}[dummy]{Hypothesis}
\numberwithin{equation}{section}
\newtheoremstyle{break}
  {\topsep}{\topsep}%
   {\itshape}{}%
  {\bfseries}{}%
  {\newline}{}%
\theoremstyle{break}
\newtheorem{ex}[dummy]{Example}
\begin{document}
\title[\fontsize{7}{9}\selectfont  ]{Stability for degenerate wave equations with drift under simultaneous degenerate damping}
\author{Mohammad Akil$^1$ , Genni Fragnelli$^2$ , Ibtissam Issa$^3$ }

\address{$^1$ Universit\'e Polytechnique Hauts-de-France, C\'ERAMATHS/DEMAV, Le Mont Houy 59313 Valenciennes Cedex 9-France.}

\address{$^2$ Department of Ecology and Biology, Tuscia University, Largo dell’Universit\'a, 01100 Viterbo - Italy.}
\address{$^3$ Universit\'a  degli studi di Bari Aldo Moro-Italy, Dipartimento di Matematica, Via E. Orabona 4, 70125 Bari - Italy.}

\email{mohammad.akil@uphf.fr}
\email{genni.fragnelli@unitus.it}
\email{ibtissam.issa@uniba.it}

\keywords{Degenerate wave equation; degenerate damping, drift, exponential stability.}

\begin{abstract}
In this paper we study the stability of two different problems.
The first one is a one-dimensional degenerate wave equation with degenerate damping, incorporating a drift term and a leading operator in non-divergence form. In the second problem we consider a system that couples degenerate and non-degenerate wave equations, connected through transmission, and subject to a single dissipation law at the boundary of the non-degenerate equation. In both scenarios, we derive exponential stability results.
\end{abstract}

\maketitle
\pagenumbering{roman}
\maketitle
\tableofcontents
\pagenumbering{arabic}
\setcounter{page}{1}
\section{Introduction}

Degenerate partial differential equations (PDEs) are a subclass of PDEs in which some of the coefficients or terms lose their independence or become dependant on the same variable. As a result, the PDE could lose its typical characteristics, including ellipticity or hyperbolicity, which can have a substantial impact on how solutions behave.

In the context of wave equations, one intriguing aspect of wave equations arises when certain coefficients or terms become dependent on the same variable, leading to the emergence of degenerate wave equations. In such scenarios, the usual well-behaved properties of wave equations, such as hyperbolicity and ellipticity, may no longer hold, resulting in unique and sometimes counterintuitive wave behaviors. In some physical systems, losing ellipticity can cause novel phenomena, such as the appearance of many solutions with the same energy (degenerate energy levels) or the non-uniqueness of solutions. In materials with anisotropic properties, the wave equation can become degenerate, meaning that certain directions of wave propagation exhibit different behaviors compared to others. For example, in certain crystals, the speed of wave propagation may vary depending on the direction of the wave, leading to degeneracy in the wave equation. Also, in fluid dynamics,  in the study of shallow water waves, some specific flow conditions can cause the wave equation to become degenerate.

In many real-world contexts, including camouflage (the fabrication of devices that render their operators invisible to outside observation) \cite{Cloaking},  L\'evy noise \cite{BiswasMajee}, meteorology \cite{BADII1999713},  and biological population  \cite{NIKAN}, degenerate partial differential equations (PDEs) give rise to control and inverse problems. Intricate mathematical problems associated with degenerate PDEs have been brought to light by these many applications. For example, the field of semiconductor physics and device engineering provides a significant physical application where degenerate PDEs play a crucial role in understanding and optimizing the behavior of modern electronic devices.\\
These circumstances often include operators that are not uniformly elliptic because their diffusion coefficients vary spatially. However, these operators become uniformly elliptic in small areas of the spatial domain that are at positive distance from the degenerate area. Degeneracy can occur on either the boundary or an internal submanifold.\\
Recently, there has been a growing interest in the study of degenerate wave equations.  To the best of our knowledge, there has been no investigation conducted to date that addresses the case in which both the wave and the damping are simultaneously considered degenerate. Furthermore, the concept of internally localized degenerate damping has not been explored in the existing literature. Thus,  in this paper, the main focus is twofold. Firstly, it aims to explore the stability of a degenerate wave equation that incorporates locally degenerate damping.  Significantly, this study distinguishes itself as the first to simultaneously consider the degeneracy of both the wave and the damping in the equation. Also, in this paper we get a better generalized condition for the exponential stability than that in \cite{fragnelli2022linear} in Hypothesis 4. Thus, the system for this case is as follows:
\begin{equation}\label{S1}
\left\{\begin{array}{lll}
u_{tt}-a(x)u_{xx}-b(x)u_{x}+h(x)u_{t}=0,& (x,t) \in (0,1)\times\R^+_{\ast},\\[0.1in]
u(0, t)=u(1,t)=0, &t\in \R^+_{\ast},\\[0.1in]
u(x,0)=u_0(x), u_t(x,0)=u_1(x),&x\in (0,1)
\end{array}\right.
\end{equation}
where $a, b\in C ^{0}[0,1]$, with $a>0$ on $(0,1]$, $a(0)=0$ and $\dfrac{b}{a}\in L^1(0,1)$. Hence, if $a(x)=x^K$, $K>0$, we can consider $b(x)=x^m$ for any $m>K-1$. The damping coefficient function $h(\cdot):(0,1)\rightarrow \R^{+}\cup \{0\}$, belongs to $L^{\infty}(0,1)$ and satisfies:
$h(x)=0$ for $x\in (0,x_1)\cup(x_2,1)$ (assuming $0<x_1<x_2<1$) and $h(x)=h_1(x)=\abs{(x-x_1)^{\alpha_1}(x-x_2)^{\alpha_2}}$ for $x\in [x_1,x_2]$ with $\alpha_1, \alpha_2\geq 0$ . The initial data $u_0$ and $u_1$ belong to suitable weighted spaces.
The degeneracy of \eqref{S1} at $x = 0$ is measured by the parameter $K$ defined by
\begin{equation}\label{Degeneracy}
K:=\sup _{x\in (0,1]}\dfrac{x\abs{a^{\prime}(x)}}{a(x)}.
\end{equation}
We say that $a$ is weakly degenerate at $0$, (WD) for short, if 
\begin{equation}\label{WD}\tag{WD}
a\in C^0[0,1]\cap C^1(0,1] \quad  \text{and} \quad K\in (0,1)
\end{equation}
and we say that $a$ is strongly degenerate at $0$, (SD) for short, if 
\begin{equation}\label{SD}\tag{SD}
a\in C^1[0,1] \quad \text{and}\quad K\in [1,2).
\end{equation}
Here we assume $K<2$ because it is essential in the calculation that will be conducted below. 

Secondly, it aims to study the coupling of a degenerate and a non degenerate wave equations via transmission with only one dissipation law acting at the end of the non degenerate part.  The system is given as follows
\begin{equation}\label{System2}
\left\{\begin{array}{lll}
u^1_{tt}-a(x)u^1_{xx}-b(x)u^1_{x}=0,& (x,t) \in (0,1)\times\R^+_{\ast},\\[0.1in]
y^1_{tt}-y^1_{xx}=0, &(x,t) \in (1,L)\times\R^+_{\ast},\\[0.1in]
u^1(0,t)=0, u^1(1,t)=y^1(1,t), &t\in \R^+_{\ast},\\[0.1in]
(\eta u^1_x)(1,t)=y^1_x(1,t), y^1_x(L,t)=-y^1_t(L,t),&t\in \R^+_{\ast},\\[0.1in]
u^1(x,0)=u^1_0(x), u^1_t(x,0)=u^1_1(x),&x\in (0,1),\\[0.1in]
y^1(x,0)=y^1_0(x), y^1_t(x,0)=y^1_1(x), &x\in (1,L),
\end{array}\right.
\end{equation}
where $L>1$, $a$, $b$ are defined as in system \eqref{S1},  and $\eta$ is the well-known absolutely continuous weight function
$$\eta(x):=\exp\left\lbrace\int _{\frac{1}{2}}^{x}\dfrac{b(s)}{a(s)}ds\right\rbrace, \quad x\in [0,1],$$
introduced by Feller in a related context \cite{Feller} and used by several authors, see, for example, \cite{PiermarcoGenni} or \cite{FRAGNELLI20161314} and the references therein.\\
Indeed, prior to delving into the systems discussed in this paper, a literature review on the investigation of degenerate systems would be valuable. It is well known that standard linear theory for transverse waves in a string of length $L$ under tension $\mathcal{T}$ leads to the classical wave equation
$$\rho(x)u_{tt}(x,t)=\frac{\partial \mathcal{T}}{\partial x}u_x(x,t)+\mathcal{T}(x,t)u_{xx}(x,t),
$$
where $u(x,t)$ denotes the vertical displacement of the string from the $x$ axis at position $x\in (0,L)$ and time $t> 0$, $\rho(x)$ is the mass density of the string at position $x$, while $\mathcal{T} (x,t)$ denotes the tension in the string at position $x$ and time $t$. Divide by $\rho(x)$, assume $\mathcal{T}$ is independent of $t$, and set $a(x)=\mathcal{T}(x)\rho^{-1}(x)$, $b(x)=\mathcal{T}^{\prime}(x)\rho^{-1}(x)$. In this way, we obtain
$$u_{tt}(x,t)=a(x)u_{xx}(x,t)+b(x)u_x(x,t).
$$
Let's assume that the density is remarkably high at a particular point, for example, $x = 0$. In this case, the previous equation degenerates at $x = 0$, as we can treat $a(0) = 0$, and the remaining term becomes a drift term.

Controllability problems concerning parabolic issues have become a prominent subject in contemporary research. Initially explored in the context of the heat equation, subsequent contributions have extended this investigation to encompass more generalized scenarios. A frequently adopted approach to establish controllability involves proving global Carleman estimates for the adjoint operator of the given problem. While such estimates have been extensively developed for uniformly parabolic operators without any degeneracies or singularities.\\
In recent years, researchers have expanded their investigations of these estimates to encompass operators that are not uniformly parabolic. In fact, several problems arising in Physics and Biology (see \cite{Karachalios2005}), Biology (see \cite{Boutaayamou2020,  Fragnelli+2020}), as well as Mathematical Finance (see \cite{Hagan}), are governed by degenerate parabolic equations.
The existing literature focused on controlling and stabilizing the nondegenerate wave equation using diverse damping methods is notably extensive. This fact can be observed in the substantial number of works cited, as exemplified by \cite{CHENTOUF, Conrad1993, dAndraNovel1994} and the references mentioned within.
In \cite{CHENTOUF}, the authors consider the following modelization of a flexible torque arm controlled by two feedbacks depending only on the boundary velocities:
\begin{equation*}
\begin{cases}
y_{tt}(x,t)-(ay_x)_x(x,t)+\beta y_t(x,t)+\gamma y(x,t) =0,& (x,t)\in (0,1)\times\R^+_{\ast},\\[0.1in]
(ay_x)(0,t)=\varepsilon_1U_1(t),& t>0,\\[0.1in]
(ay_x)(1,t)=\varepsilon_2U_2(t),& t>0,\\[0.1in]
\end{cases}
\end{equation*}
\text{where} $ \beta\geq 0, \, \gamma>0, \, \varepsilon_1,\varepsilon_2\geq 0, \quad \varepsilon_1+\varepsilon_2\neq 0$ and $a\in W^{1,\infty}(0,1), \, a(x)\geq a_0>0$ for all $x\in [0,1].$
They prove the exponential decay of the solutions. On the contrary, when the coefficient $a(x)$ degenerates very little is known in the literature, even though many problems that are relevant for applications are described by hyperbolic equations degenerating at the boundary of the space domain (see \cite{Gueye}).
Lately, the subject of controllability and stability in degenerate hyperbolic equations has gained significant attention, with various advancements made in recent years (see \cite{Cannarsa}, \cite{Gueye}, \cite{Zhang2015}, and the references mentioned within). In \cite{Alabau2006}, the authors prove a Carleman estimate for the one dimensional degenerate heat equation and the null controllability on $[0, 1]$ of the semilinear degenerate parabolic equation is also studied.  On the other hand,  in \cite{Fragnelli2016}, the authors establish Carleman estimates for singular/degenerate parabolic Dirichlet problems with degeneracy and singularity occurring in the interior of the spatial domain. 

More recently, Alabau et al. consider a degenerate wave equation of the form $u_{tt}-(a(x)u_x)_x=0$ in $(0,1) \times (0,+\infty)$, where $a$ is positive on $(0,1]$ and vanishes at zero. Initially presented in an arxiv preprint in May 2015 and later published in \cite{Cannarsa}, their work establishes observability inequalities for weakly and strongly degenerate equations, proving negative results when the diffusion coefficient degenerates too violently (i.e., when the constant $K$ in \eqref{Degeneracy} is greater than $2$). They also study the blow up of observability time when $K$ converges to 2 from below and  prove the exact controllability of the corresponding degenerate control problem. Using the optimal-weight convexity method of \cite{AlabauBoussouira2004} and \cite{ALABAU20101473}, together with the results for linear damping, in \cite{Cannarsa} the authors also study the boundary stabilization of the degenerate linearly damped wave equation, showing that a suitable boundary feedback stabilizes the system exponentially.  In the same work they also consider the stability analysis for the degenerate nonlinearly boundary damped wave equation for an arbitrarily growing nonlinear feedback close to the origin, showing that degeneracy does not affect optimal energy decay rates over time. Then, in \cite{Zhang2015}, Zhang and Gao focus on the case $a(x) = x^{\alpha}$  investigating null controllability for degenerate wave equations using different spectral methods. \\
In \cite{Benaissa2018}, a one-dimensional degenerate wave equation with a boundary control condition of fractional derivative type is considered.  The authors show that the problem is not uniformly stable by a spectrum method obtaining a polynomial stability.  Recently, in \cite{fragnelli2022linear},  the authors consider  a degenerate wave equation in one dimension, with drift and in presence of a leading degenerate operator which is in non divergence form.  In particular, they prove uniform exponential decay under some conditions for the  solutions of the following system
\begin{equation}\label{Genni}
\left\{\begin{array}{llll}
y_{tt}-a(x)y_{xx}-b(x)y_{x}=0,& (x,t) \in (0,1)\times(0,T),\\[0.1in]
y_t(t,1)+\eta y_x(t,1)+\beta y(t,1)=0, &t\in (0,T),\\[0.1in]
y(t,0)=0,&t\in (0,T),\\[0.1in]
y(0,x)=y_0(x), y_t(0,x)=y_1(x),&x\in (0,1),
\end{array}\right.
\end{equation}
where homogeneous Dirichlet boundary condition is taken where the degeneracy occurs and a boundary damping is considered at the other endpoint. A boundary controllability problem for a system similar to \eqref{Genni} is considered in \cite{Boutaayamou2023}. In particular, the authors study the controllability of the system  by providing some conditions for the boundary controllability of the solution of the associated Cauchy problem at a sufficiently large time.
On the other hand, in \cite{Alhabib1} the authors consider the exact boundary controllability for a degenerate and singular wave equation in a bounded interval with a moving endpoint.  Later, in \cite{Allal2022}, the boundary controllability of a one-dimensional degenerate and singular wave equation with degeneracy and singularity occurring at the boundary of the spatial domain is considered. In particular,  exact boundary controllability is proved in the range of both subcritical and critical potentials and for sufficiently large time, through a boundary controller acting away from the degenerate/singular point. \\
Some years ago, in \cite{Liu1},  the authors study the stability of an elastic string system with local Kelvin– Voigt damping given in the following system
\begin{equation}
\left\{\begin{array}{lll}
u_{tt}(x,t)-[u_{x}(x,t)+b(x)u_{xt}(x,t)]_x=0,&  t\geq 0,\, -1<x<1,\\[0.1in]
u(t,-1)=u(t,1)=0, &t\geq 0,\\[0.1in]
u(x,0)=u_0(x), u_t(x,0)=u_1(x),&x\in (-1,1).
\end{array}\right.
\end{equation}
The function $b(x)\in L^{\infty}(-1,1)$ is assumed to be
\begin{equation}
b(x)=
\left\{
\begin{array}{ll}
0,\quad &\text{for}\, x\in [-1,0),\\
a(x),\quad &\text{for}\,x\in [0,1],
\end{array}
\right.
\end{equation}
where the function $a(x)$ is nonnegative. Under the assumption that the damping coefficient has a singularity at the interface of the damped and undamped regions and behaves like $x^{\beta}$ near the interface, they prove that the semigroup corresponding to the system is polynomially (of order $\frac{1}{1-\beta}$ when $\beta\in (0,1)$) or exponentially (when $\beta\geq 1$) stable and the decay rate depends on the parameter $\beta\in (0,1]$. \\
It is known that the optimal decay rate of the solution is $t^{-2}$ in the limit case $\beta=0$ and exponential for $\beta\geq 1$. In the case when the damping coefficient $b(x)$ is continuous, but its derivative has a singularity at the interface $x = 0$, the best known decay rate is $t^{-\frac{3-\beta}{2(1-\beta)}}$(see \cite{Zhong})  , which fails to match the optimal one at $\beta = 0$.  The authors in \cite{Han2022},  obtain a sharper polynomial decay rate $t^{-\frac{2-\beta}{1-\beta}}$; more significantly, the decay rate is consistent with the optimal polynomial decay rate at $\beta = 0$  and uniform boundedness of the resolvent operator on the imaginary axis at $\beta = 1$ (consequently, the exponential decay rate at $\beta=1$ as $t\to \infty$). But, they don't reach the optimal decay rate in this case. For the case of coupling systems, we mention \cite{Wehbe2021}  where the authors prove a polynomial energy decay rate for a locally coupled wave equations with only one internal viscoelastic damping of Kelvin-Voigt type. On the other hand, in \cite{Mohammad},  the authors study the wave-Euler Bernoulli beam equations coupled through transmission with localized fractional Kelvin-Voigt damping acting on one of the two equation. In this case the authors prove polynomial energy decay rate in the different placement of the damping.\\

\noindent The main novelty of this paper lies in:\\
$\bullet$ The simultaneous consideration of degeneracy in both the wave and damping aspects within the first system. Additionally, the concept of internally localized degenerate damping remains unexplored in existing literature. Furthermore, we have achieved exponential stability regardless the of degeneracy, whether it's weak or strong,  representing an independent and significant result.  This stability is attained under assumptions that depend on the length of the damped region according to the choice of the functions $a$ and $b$. Moving on to the second system discussed in this paper, it involves a coupling of the degenerate wave equation with a non-degenerate wave equation through transmission, where only one damping effect is applied at the endpoint of the non-degenerate part.\\
$\bullet$ The degenerate wave equation has a damping term only in the first system \eqref{S1} and not in \eqref{System2}.  Actually,  in \eqref{System2} the damping occurs at the endpoint of the non-degenerate part, and the two equations are connected via transmission.  Notably, achieving exponential stability doesn't necessitate damping of the degenerate equation itself. Thus, ensuring damping at the non-degenerate equation's boundary is sufficient to establish exponential stability.\\
$\bullet$ We enhance Hypothesis 4  in \cite{fragnelli2022linear}, which leads to a more generalized condition for exponential stability compared to the one considered in that paper.\\

The paper is structured as follows. Section 1 addresses a system of a degenerate wave equation with internally local degenerate damping. We reframe the system \eqref{S1} into an evolution system and establish the well-posedness of this system using a semigroup approach. Furthermore, we demonstrate the exponential stability of the system using multiplier methods. Moving to Section 2, we investigate a degenerate wave equation coupled with a non-degenerate one via transmission. The non-degenerate wave equation is subjected to a single dissipation law, which acts only on its end.  For this particular system \eqref{System2}, the authors also prove exponential stability.

\section{Stabilization of degenerate wave equation with drift and locally internal degenerate damping}\label{sec2}
\noindent In this section, we focus on the well-posedness and the exponential stability of \eqref{S1}. 
\subsection{Preliminaries, Functional spaces and Well-Posedness}
This subsection is devoted to establish a very modest assumption and to define the functional spaces that will be used throughout the entire paper.  Moreover, the well-posedness of \eqref{S1} is studied. We begin with the following hypotheses.
\begin{Hypothesis}\label{hyp1}
Functions $a$ and $b$ are continuous in $[0,1]$ and such that $\dfrac{b}{a}\in L^1(0,1)$.
\end{Hypothesis}

\begin{Hypothesis}\label{hyp3}
Hypothesis \ref{hyp1} holds.  In addition, $a$ is such that $a(0)=0, a>0$ on $(0,1]$ and there exists $\alpha>0$ such that the function 
\begin{equation}\label{rq}
x\rightarrow\dfrac{x^{\alpha}}{a(x)}
\end{equation}
is non-decreasing in a right neighborhood of $x=0$.
\end{Hypothesis}

\begin{Remark}
\begin{enumerate}
\item If $a$ is (WD) or (SD), then \eqref{rq} holds for all $\alpha\geq K$ and for all $x\in (0,1)$.
\item We notice that, at this stage, a may not degenerate at $x = 0$.  However, if it is (WD) then $\dfrac{1}{a}\in L^1(0,1)$ and the assumption $\dfrac{b}{a}\in L^1(0,1)$ is always satisfied.  If $a$ is (SD) then $\dfrac{1}{a}\notin  L^1(0,1)$,  hence, if we want $\dfrac{b}{a}\in L^1(0, 1)$  then $b$ has to degenerate at $0$. In this case $b$ can be (WD) or (SD).
\end{enumerate}
\end{Remark}
\noindent In order to study the well-posedness of \eqref{S1}, let us recall the well-known absolutely continuous weight function
$$\eta(x):=\exp\left\lbrace\int _{\frac{1}{2}}^{x}\dfrac{b(s)}{a(s)}ds\right\rbrace, \quad x\in [0,1].$$
introduced by Feller in a related context \cite{Feller} and used later by several authors, see, for example, \cite{PiermarcoGenni}, \cite{FRAGNELLI20161314} and the references therein.

Under Hypothesis \ref{hyp1}, it is clear that the function $ \eta:[0,1]\rightarrow \R $ introduced before is well defined and we immediately find that $\eta\in C^0[0, 1] \cap C^1(0, 1]$  is a strictly positive function, which is bounded above and below by a positive constant. Notice also that $\eta$ can be extended to a function of class $C^1[0,1]$ when $b$ degenerates at $0$ not slower than $a$, for instance if $a(x) = x^K$ and $b(x)=x^m$ with $K\leq m$.\\
Now we set the function $\sigma$ as 
\begin{equation}\label{sigma}
\sigma(x):=\dfrac{a(x)}{\eta(x)},
\end{equation}
which is a continuous function in $[0, 1]$, independent of the possible degeneracy of $a$. Moreover, observe that if $u$ is a sufficiently smooth function, e.g. $u\in W^{2,1}_{loc}(0,1)$, then we can write $Bu := au_{xx} + bu_x$
as
$$Bu=\sigma(\eta u_x)_x.$$
By using the definition of $ \sigma$,  the system \eqref{S1} can be rewritten as
\begin{equation}\label{S}
\left\{\begin{array}{lll}
u_{tt}-\sigma(\eta u_{x})_{x}+h(x)u_{t}=0,& (x,t) \in (0,1)\times\R^+_{\ast},\\[0.1in]
u(t,0)=u(t,1)=,0 &t\in \R^+_{\ast},\\[0.1in]
u(0,x)=u_0(x), y(0,x)=y_0(x),&x\in (0,1).
\end{array}\right.
\end{equation}
We introduce the following Hilbert spaces
$$L_{\frac{1}{\sigma}}^2(0,1):=\left\lbrace y\in L^2(0,1); \|y\|_{\frac{1}{\sigma}}\leq\infty\right\rbrace,  \quad\langle y,z\rangle_{\frac{1}{\sigma}}:=\int _0^1 \dfrac{1}{\sigma}y\bar{z}dx,\quad \text{for every} \quad y,z\in L_{\frac{1}{\sigma}}^2(0,1),$$
$$\ma{H_{\frac{1}{\sigma}}^1(0,1)}:=L_{\frac{1}{\sigma}}^2(0,1)\cap \ma{H^1(0,1)}, \quad \langle y,z\rangle_{1}:=\langle y,z\rangle_{\frac{1}{\sigma}}+\int _0^1\eta y_x\bar{z}_xdx,\quad \text{for every}  \quad y,z\in H_{\frac{1}{\sigma}}^1(0,1), $$
and 
$$H_{\frac{1}{\sigma}}^2(0,1):=\left\lbrace y\in  H_{\frac{1}{\sigma}}^1(0,1); By\in L_{\frac{1}{\sigma}}^2(0,1) \right\rbrace\quad \langle y,z\rangle_{2}:=\langle y,z\rangle_{1}+\langle By,Bz\rangle_{\frac{1}{\sigma}}.$$
The previous inner products induce related respective norms
$$\|u\|^2_{\frac{1}{\sigma}}=\int_0^1 \frac{1}{\sigma}|u|^2dx, \quad \|u\|_{1}^2=\|u\|^2_\frac{1}{\sigma}+\int_0^1 \eta |u_x|^2dx\quad \text{and}\quad \|u\|_{2}^2=\|u\|^2_{1}+\int_0^1 \sigma |(\eta u_x)_x|^2dx.$$
Also, we consider the following spaces
$$H_{\frac{1}{\sigma},0}^1(0,1)= L_{\frac{1}{\sigma}}^2(0,1)\cap H^1_0(0,1)\quad\text{and}\quad H_{\frac{1}{\sigma},0}^2(0,1):=\left\{u\in H_{\frac{1}{\sigma},0}^1(0,1); By \in L_{\frac{1}{\sigma}}^2(0,1) \right\}$$
endowed with the previous inner products and related norms and we denote by $\|\cdot\|=\|\cdot\|_{L^2(0,1)}$.

\begin{Proposition}(Hardy-Poincar\'e Inequality)\label{prop1}
Assume Hypothesis \ref{hyp3}.  Then there exists $C_{HP} > 0$ such that
\begin{equation}\label{HP}\tag{$\rm{HP}$}
\int_0^1 u^2 \dfrac{1}{\sigma}dx\leq C_{HP} \int_0^1 u_x^2dx\quad\forall\, v\in H^1_{\frac{1}{\sigma},0}(0,1).
\end{equation}
where $\displaystyle{C_{HP}=\left(\frac{4}{a(1)}+\max_{x\in [\beta,1]}\left(\frac{1}{a}\right)C_P\right)\max_{x\in [0,1]}\eta(x)}$ ,  $C_P$ the constant of the classical Poincar\'e inequality on $(0,1)$ and $\beta\in (0,1)$.  \end{Proposition}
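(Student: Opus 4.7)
The strategy is to reduce, via the identity $\frac{1}{\sigma}=\frac{\eta}{a}$ together with the boundedness of $\eta$ on $[0,1]$, to an inequality of the form $\int_0^1 \frac{u^2}{a}\,dx \le C\int_0^1 u_x^2\,dx$, and then to split the integral at the point $\beta\in(0,1)$ chosen inside the right neighborhood of $0$ on which Hypothesis \ref{hyp3} guarantees the monotonicity of $x\mapsto x^\alpha/a(x)$. The two resulting subintervals are handled by complementary devices: a weighted Hardy estimate near the degenerate endpoint, and ordinary Poincar\'e where $a$ is bounded away from zero.

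Concretely, I would first factor out $\max_{[0,1]}\eta$, writing
$$\int_0^1\frac{u^2}{\sigma}\,dx\;=\;\int_0^1 \eta\,\frac{u^2}{a}\,dx\;\le\;\max_{[0,1]}\eta\Bigl(\int_0^\beta\frac{u^2}{a}\,dx+\int_\beta^1\frac{u^2}{a}\,dx\Bigr),$$
and estimate the two integrals in turn. On $[\beta,1]$ the function $a$ is continuous and strictly positive, so the pointwise bound $1/a\le\max_{[\beta,1]}(1/a)$ applies and the classical Poincar\'e inequality, available because $u\in H^1_0(0,1)\subset H^1_{1/\sigma,0}(0,1)$, delivers the contribution $\max_{[\beta,1]}(1/a)\,C_P$ to the constant.

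On $(0,\beta)$ I would exploit $u(0)=0$ and combine the classical one-dimensional Hardy inequality $\int_0^1 u^2/x^2\,dx\le 4\int_0^1 u_x^2\,dx$ with a comparison of the form $1/a(x)\le 1/(a(1)\,x^2)$. This comparison is extracted from Hypothesis \ref{hyp3} (the Remark following the hypothesis allows one to take the exponent equal to $2$, since $K<2$ in both \eqref{WD} and \eqref{SD}): the non-decreasing character of $x^2/a(x)$, evaluated between $x\in(0,\beta]$ and the endpoint $x=1$, yields $a(x)\ge a(1)\,x^2$, whence
$$\int_0^\beta \frac{u^2}{a}\,dx\;\le\;\frac{1}{a(1)}\int_0^1 \frac{u^2}{x^2}\,dx\;\le\;\frac{4}{a(1)}\int_0^1 u_x^2\,dx.$$
Summing the two contributions and reinstating the factor $\max_{[0,1]}\eta$ produces the announced value of $C_{HP}$.

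The delicate step is the Hardy estimate on $(0,\beta)$. The exponent $\alpha$ supplied by Hypothesis \ref{hyp3} is a priori only some positive number, and the monotonicity is a priori only local, so to apply the classical Hardy inequality with the pure weight $1/x^2$ one must check that $\alpha$ can effectively be chosen equal to $2$ (this is where $K<2$ is used) and that the monotonicity of $x^2/a(x)$ propagates from a right neighborhood of $0$ all the way up to $x=1$, so that the normalization $a(1)$ appears in the denominator rather than $a(\beta)$. This global extension of the monotonicity, combined with the sharp constant $4$ in the Hardy inequality, is precisely what produces the clean factor $4/a(1)$ in the statement.
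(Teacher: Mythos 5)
Your proposal is correct and follows essentially the same route as the paper: factor out $\max_{[0,1]}\eta$, split the integral at $\beta$, use the classical Hardy inequality with constant $4$ near the degenerate endpoint after comparing $a(x)$ with $a(1)x^{2}$ via the global monotonicity of $x^{\alpha}/a(x)$ guaranteed by the Remark in the (WD)/(SD) cases, and use the classical Poincar\'e inequality on $[\beta,1]$. The only cosmetic difference is that the paper passes through the weight $x^{-K}$ and then uses $x^{-K}\le x^{-2}$ on $(0,1]$ (since $K<2$), whereas you take $\alpha=2$ directly; the two are equivalent.
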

\begin{proof}
The one dimensional Hardy inequality with one-sided boundary condition is represented by 
$$
\abs{u(1)}^p+\frac{p-1}{p}\int_0^1\frac{\abs{u(x)}^p}{x^p}dx\leq \left(\frac{p}{p-1}\right)^{p-1}\int_0^1\abs{u_x}^2dx,
$$
for every $u\in C_{c}^{\infty}((0,1])$ and $1<p<\infty$. When $u(1)=0$, this is a well-known Hardy inequality. Now, taking $p=2$ in the above inequality, we get 
\begin{equation}\label{1Hardy}
\int_0^1\frac{\abs{u(x)}^2}{x^2}dx\leq 4\int_0^1\abs{u_x}^2dx.	
\end{equation}
Now, take $u\in H^1_{\frac{1}{\sigma}}(0,1)$ and using the definition of $\sigma$, we get 
\begin{equation}\label{2Hardy}
\int_0^1\frac{\abs{u}^2}{\sigma}dx=\int_0^1\eta \frac{\abs{u}^2}{a}dx\leq \max_{x\in [0,1]}\eta(x)\int_0^1\frac{\abs{u}^2}{a}dx.
\end{equation}
Thus it is sufficient to estimate $\displaystyle{\int_0^1\frac{\abs{u}^2}{a}dx}$. To this aim,  thanks to Hypothesis \ref{hyp3} and \eqref{1Hardy}, we get
$$
\begin{array}{l}
\displaystyle 
\int_0^1\frac{1}{a}\abs{u}^2dx=\int_0^\beta\frac{1}{a}\abs{u}^2dx+\int_\beta^1\frac{1}{a}\abs{u}^2dx\leq \frac{1}{a(1)}\int_0^1\frac{1}{x^K}\abs{u}^2dx+\int_\beta^1\frac{1}{a}\abs{u}^2dx\\
\displaystyle 
\leq \frac{1}{a(1)}\int_0^1\frac{1}{x^2}\abs{u}^2dx+\int_\beta^1\frac{1}{a}\abs{u}^2dx\leq \frac{4}{a(1)}\int_0^1\abs{u_x}^2dx+\max_{x\in [\beta,1]}\left(\frac{1}{a(x)}\right)\int_0^1\abs{u}^2dx\\
\displaystyle 
\leq \left(\frac{4}{a(1)}+\max_{x\in [\beta,1]}\left(\frac{1}{a}\right)C_P\right)\int_0^1\abs{u_x}^2dx.
\end{array}
$$
Combining the above inequality with \eqref{2Hardy}, we get the desired result \eqref{HP}.  	
\end{proof}

\noindent Using Hypothesis \ref{hyp3}, we have 
$$\int_0^1\eta |u_x|^2dx\leq \|u\|^2_{1}\leq  \left(C_{HP}\max_{x\in[0,1]}\left(\frac{1}{\eta}\right)+1\right)\int_0^1\eta |u_x|^2dx,$$
Thus,  $\ma{\|u\|^2_{\frac{1}{\sigma},0}}$  and $\displaystyle\int_0^1\eta |u_x|^2dx$ are equivalent. Moreover, the norms $\|u\|^2_{\frac{1}{\sigma},1}$ and the usual norm in $H^1_0$ i.e. $\displaystyle\int_0^1 |u_x|^2dx$ are equivalent for all $u \in H^1_0(0, 1)$. Indeed,
$$
\min_{[0,1]}\eta\int_0^1 |u_x|^2dx\leq \|u\|^2_{1}\leq \left(C_{HP}+\max_{x\in[0,1]}\eta\right)\int_0^1 |u_x|^2dx$$
where $C_{HP}$ is the Hardy-Poincar\'e constant introduced in Proposition \ref{prop1}.
Now, defining the domain of the operator $B$ as
$$D(B)=H_{\frac{1}{\sigma},0}^2(0,1)$$
and  using a semigroup approach, we will establish the well-posedness result for  \eqref{S1}. Let $(u,u_t) $ be a regular solution of the system \eqref{S}. The energy of the system is given by
\begin{equation}\label{Energy}
E(t)=\frac{1}{2}\int_0^1\left(\dfrac{1}{\sigma}|u_t|^2+\eta|u_x|^2\right)dx
\end{equation}
and we obtain that
$$
\dfrac{d}{dt} E(t)=-\int_0^1 \frac{1}{\sigma} h(x)| u_t|^2dx\leq 0.
$$
Thus, the system \eqref{S} is dissipative in the sense that its energy is a non increasing function with respect to the time variable $t$. 
We define the energy  Hilbert space $\mathcal{H}$ by 
$$
\mathcal{H}=H^1_{\frac{1}{\sigma},0}(0,1)\times L^2_{\frac{1}{\sigma}}(0,1)
$$
equipped with the following inner product
$$
\left<U,\widetilde{U}\right>_{\mathcal{H}}=\int_0^1\left(\eta u_x{\bar{\tilde{u_x}}}+\frac{1}{\sigma} v\bar{\tilde{v}}\right)dx,\quad \text{for all}\,\, U=(u,v)^{\top}\in\mathcal{H}\,\, \text{and}\,\, \tilde{U}=(\tilde{u},\tilde{v})^{\top}\in\mathcal{H}
$$
 and endowed with the the norm $\displaystyle\|U\|^2_{\mathcal{H}}=\int_0^1 \left(\eta|u_x|^2+\frac{1}{\sigma}|v|^2\right)dx$ . Finally, defining the unbounded linear operator $\mathcal{A}$ by 
$$
\mathcal{A}(u,v)^{\top}=\left(v,\,\displaystyle{\sigma(\eta  u_x)_x-h(x) v} \right)^{\top}
$$
for all $U=(u,v)\in D(\mathcal{A})$, where
$$
D(\mathcal{A}):=H^2_{\frac{1}{\sigma},0}(0,1)\times H^1_{\frac{1}{\sigma},0}(0,1),$$
we can rewrite \eqref{S} as the following evolution equation
\begin{equation}\label{Cauchy}
U_t=\mathcal{A}U,\quad U(0)=U_0
\end{equation}
where $U_0=\left(u_0,u_1\right)^{\top}$.\\
In order to estimate some terms in the following results, the below lemmas are important.

\begin{Lemma}\label{Lemma0}(See Lemma 2.2 in \cite{fragnelli2022linear})
\begin{enumerate}
\item Assume  Hypothesis \ref{hyp1}. If $u\in H^2_{\frac{1}{\sigma}}(0,1)$ and if $v\in H^1_{\frac{1}{\sigma},0}(0,1)$, then \\ $\displaystyle\lim_{x\to 0}v(x)u_x(x)=0$.
\vspace{0.2cm}
\item Assume  Hypothesis \ref{hyp3}. If $u\in D(\mathcal{A})$, then $xu_x(\eta u_x)_x\in L^1(0,1)$.
\vspace{0.2cm}
\item Assume Hypothesis \ref{hyp3}. If $u\in D(\mathcal{A})$ and $K\leq 1$, then $\displaystyle \lim_{x\to 0}x |u_x|^2=0$.

\item Assume Hypothesis \ref{hyp3}. If $u\in D(\mathcal{A})$,  $K> 1$ and $\dfrac{xb}{a}\in L^{\infty}(0,1)$, then  $\displaystyle \lim_{x\to 0}x |u_x|^2=0$.
\item Assume Hypothesis \ref{hyp3}. If $u\in H^1_{\frac{1}{\sigma}}(0,1)$, then $\displaystyle\lim_{x\to 0}\dfrac{x}{a}|u(x)|^2=0$.
\end{enumerate}
\end{Lemma}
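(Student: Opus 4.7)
The plan is to dispatch the five items systematically via the decomposition $Bu=\sigma(\eta u_x)_x$ and the consequence of Hypothesis \ref{hyp3} (via the Remark) that $x^{\alpha}/a(x)\le C$ near $0$ for some $\alpha<2$; in particular $x^{2}/a(x)\to 0$ and $x/\sqrt{a(x)}$ is bounded on $(0,1]$. A weighted Cauchy--Schwarz between $L^{2}_{\sigma}$ and $L^{2}_{1/\sigma}$ is the recurring tool.

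For (2), the factorisation
\begin{equation*}
x\,u_x\,(\eta u_x)_x \;=\; \frac{x}{\sqrt{a(x)}}\,\bigl(\sqrt{\eta}\,u_x\bigr)\,\bigl(\sqrt{\sigma}\,(\eta u_x)_x\bigr)
\end{equation*}
reduces the claim to Cauchy--Schwarz, since the bracketed factors lie in $L^{2}(0,1)$ by $u\in H^{1}_{\frac{1}{\sigma},0}$ and by $Bu\in L^{2}_{\frac{1}{\sigma}}$ respectively, and the prefactor is bounded. For (5), $u\in H^{1}_{\frac{1}{\sigma}}\subset C[0,1]$, so I split on $u(0)$: if $u(0)=0$, then $|u(x)|^{2}\le x\|u_x\|^{2}_{L^{2}(0,x)}$ gives $\frac{x}{a}|u|^{2}\le \frac{x^{2}}{a}\|u_x\|^{2}_{L^{2}(0,x)}\to 0$; if $u(0)\ne 0$, integrability of $|u|^{2}/\sigma$ near $0$ forces $1/a\in L^{1}(0,\delta)$, placing us in the weakly degenerate regime, and then Hypothesis \ref{hyp3} with $\alpha<1$ yields $x/a(x)\to 0$. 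Item (1) follows by combining $v\in H^{1}_{0}$, which gives $|v(x)|\le \sqrt{x}\|v_x\|_{L^{2}(0,x)}=o(\sqrt{x})$, with a control of $\sqrt{x}\,u_x(x)$ obtained from $\sqrt{\sigma}\,(\eta u_x)_x\in L^{2}(0,1)$ through the Cauchy--Schwarz increment bound
$$|\eta(x)u_x(x)-\eta(y)u_y(y)|^{2}\le \Bigl(\int_{y}^{x}\tfrac{1}{\sigma}\,dz\Bigr)\Bigl(\int_{y}^{x}\sigma\bigl((\eta u_z)_z\bigr)^{2}\,dz\Bigr).$$

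For (3) and (4), the central identity is
\begin{equation*}
\frac{d}{dx}\bigl(x|u_x|^{2}\bigr) \;=\; |u_x|^{2} + \frac{2x\,u_x(\eta u_x)_x}{\eta} - \frac{2xb(x)}{a(x)}\,|u_x|^{2}.
\end{equation*}
The first term is $L^{1}$ by $u\in H^{1}$ and the second by item (2) together with boundedness of $1/\eta$. For the third, (4) uses the hypothesis $xb/a\in L^{\infty}$ directly; in (3), this pointwise bound is \emph{automatic} since $K\le 1$ combined with $b\in C^{0}[0,1]$ and Hypothesis \ref{hyp3} (with $\alpha\le 1$) gives $xb/a\lesssim b\cdot(x^{\alpha}/a)\cdot x^{1-\alpha}\le C$ near $0$. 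Thus $(x|u_x|^{2})'\in L^{1}(0,1)$, so $L:=\lim_{x\to 0^{+}}x|u_x|^{2}$ exists; if $L>0$ then $|u_x|^{2}\gtrsim L/(2x)$ near $0$ would contradict $u_x\in L^{2}$, hence $L=0$. The main obstacle I anticipate is item (1): one cannot invoke $u\in D(\mathcal{A})$ (so $u(0)$ need not vanish), and the increment estimate above degenerates in the strongly degenerate case, where $1/\sigma\notin L^{1}$ near $0$; a finer argument — probably via the Hardy--Poincar\'e inequality of Proposition \ref{prop1}, or through approximation by smooth test functions so that the boundary contribution is rigorously tracked — will be required to close this case.
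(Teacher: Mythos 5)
The paper does not actually prove this lemma --- it is quoted verbatim from Lemma 2.2 of \cite{fragnelli2022linear} --- so your proposal has to stand on its own. Items (2)--(5) are essentially correct and follow the standard route: the factorisation $x u_x(\eta u_x)_x=\tfrac{x}{\sqrt a}\,(\sqrt\eta\, u_x)\,(\sqrt\sigma\,(\eta u_x)_x)$ for (2); the identity $(x|u_x|^2)'=|u_x|^2+2x\eta^{-1}u_x(\eta u_x)_x-2\tfrac{xb}{a}|u_x|^2\in L^1(0,1)$ for (3)--(4), where for (3) the boundedness of $x/a$ is most cleanly seen from $(x/a)'=(a-xa')/a^2\ge 0$ when $K\le 1$; and the splitting on $u(0)$ for (5). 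One repair in (5): when $u(0)\ne 0$ you cannot simply ``choose $\alpha<1$'' --- Hypothesis \ref{hyp3} guarantees the monotonicity of $x^\alpha/a$ only for the given $\alpha$ and all larger exponents, not smaller ones. Instead, use that monotonicity to get $\int_{x/2}^{x}a^{-1}\,dt\ge 2^{-\alpha-1}\,x/a(x/2)$, so that the forced integrability of $1/a$ near $0$ directly yields $x/a(x)\to 0$.

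The genuine gap is item (1), which you yourself flag as unresolved: your increment bound for $\eta u_x$ requires $1/\sigma\in L^1$ near $0$ and therefore breaks down precisely in the strongly degenerate case. The way out is not to control $u_x$ alone but the product. Write $(v\,\eta u_x)'=(\sqrt\eta\, v_x)(\sqrt\eta\, u_x)+(v/\sqrt\sigma)\,(\sqrt\sigma\,(\eta u_x)_x)$; both summands are in $L^1(0,1)$ by Cauchy--Schwarz, using only $u\in H^2_{\frac{1}{\sigma}}(0,1)$ and $v\in H^1_{\frac{1}{\sigma},0}(0,1)$, so $L:=\lim_{x\to 0}v(x)\eta(x)u_x(x)$ exists. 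If $L\ne 0$, then since $|v(x)|\le\sqrt{x}\,\varepsilon(x)$ with $\varepsilon(x):=\|v_x\|_{L^2(0,x)}\to 0$, one gets $|u_x(x)|^2\ge c/(x\,\varepsilon(x)^2)\ge c'/x$ near $0$ for every $c'$, contradicting $\sqrt\eta\, u_x\in L^2(0,1)$. Hence $L=0$, and $\lim_{x\to 0}v(x)u_x(x)=0$ because $\eta$ is bounded below by a positive constant. Note this argument uses only Hypothesis \ref{hyp1}, exactly as the statement requires.
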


\begin{Proposition}\label{M-Dissipative}
The unbounded linear operator $\mathcal{A}$ is m-dissipative in the energy space $\mathcal{H}$. 
\end{Proposition}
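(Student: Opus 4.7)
The plan is to apply the Lumer--Phillips theorem: I will check that $\mathcal{A}$ is dissipative on $D(\mathcal{A})$ and that $I-\mathcal{A}$ is surjective from $D(\mathcal{A})$ onto $\mathcal{H}$.

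\textbf{Dissipativity.} For $U=(u,v)^\top\in D(\mathcal{A})$, I would expand
\[
\langle \mathcal{A}U, U\rangle_{\mathcal{H}}=\int_0^1\eta v_x\bar{u_x}\,dx+\int_0^1(\eta u_x)_x\bar v\,dx-\int_0^1\frac{h(x)}{\sigma}|v|^2\,dx,
\]
using that $\sigma$ cancels against $1/\sigma$ in the second component of the inner product. An integration by parts on the middle term produces the boundary contribution $[\eta u_x\bar v]_0^1$; since $v\in H^1_{1/\sigma,0}(0,1)$ vanishes at $x=1$, and Lemma \ref{Lemma0}(1) gives $\lim_{x\to0}v(x)u_x(x)=0$, both endpoint terms drop out. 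Taking real parts then yields
\[
\mathrm{Re}\,\langle \mathcal{A}U,U\rangle_{\mathcal{H}}=-\int_0^1\frac{h(x)}{\sigma}|v|^2\,dx\le0,
\]
since $h\ge 0$ and $\sigma>0$ on $[0,1]$.

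\textbf{Maximality.} Given $F=(f_1,f_2)^\top\in\mathcal{H}$, I want to solve $(I-\mathcal{A})U=F$, i.e.
\[
u-v=f_1,\qquad v-\sigma(\eta u_x)_x+h(x)v=f_2.
\]
Eliminating $v=u-f_1$ reduces the problem to the single elliptic equation
\[
(1+h)u-\sigma(\eta u_x)_x=f_2+(1+h)f_1.
\]
Dividing by $\sigma$ and testing against $\varphi\in H^1_{1/\sigma,0}(0,1)$ leads to the variational formulation
\[
\mathfrak{a}(u,\varphi):=\int_0^1\frac{1+h}{\sigma}u\bar\varphi\,dx+\int_0^1\eta u_x\bar{\varphi_x}\,dx=\int_0^1\frac{f_2+(1+h)f_1}{\sigma}\bar\varphi\,dx.
\]
The form $\mathfrak{a}$ is continuous on $H^1_{1/\sigma,0}(0,1)$ and, because $h\ge 0$ and $\int_0^1\eta|u_x|^2\,dx$ is equivalent to $\|u\|_1^2$ on this space (via the Hardy--Poincar\'e inequality of Proposition \ref{prop1}), it is coercive. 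The right-hand side defines a continuous antilinear functional, so Lax--Milgram produces a unique $u\in H^1_{1/\sigma,0}(0,1)$. Setting $v:=u-f_1\in H^1_{1/\sigma,0}(0,1)$, one reads off from the equation that
\[
\sigma(\eta u_x)_x=(1+h)u-f_2-(1+h)f_1\in L^2_{1/\sigma}(0,1),
\]
because $h\in L^\infty$ and $u,f_1\in L^2_{1/\sigma}(0,1)$, so $Bu=\sigma(\eta u_x)_x\in L^2_{1/\sigma}(0,1)$ and hence $u\in H^2_{1/\sigma,0}(0,1)$. Therefore $U=(u,v)^\top\in D(\mathcal{A})$ solves $(I-\mathcal{A})U=F$.

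\textbf{Main obstacle.} The only delicate point is the boundary behaviour at the degenerate endpoint $x=0$ when integrating by parts in the dissipativity step; this is exactly what the first item of Lemma \ref{Lemma0} is tailored for. Everything else -- coercivity, continuity, regularity upgrade -- is standard once the weighted Hardy--Poincar\'e inequality is available.
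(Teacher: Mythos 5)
Your proof is correct, and the dissipativity step is essentially identical to the paper's (same expansion of the inner product, same integration by parts, same appeal to Lemma \ref{Lemma0}(1) to kill the boundary term at the degenerate endpoint). The difference lies in how maximality is established. The paper solves $-\mathcal{A}U=F$, i.e.\ it first shows $0\in\rho(\mathcal{A})$ via Lax--Milgram applied to the form $\Lambda(u,\varphi)=\int_0^1\eta u_x\bar{\varphi}_x\,dx$ (whose coercivity on $H^1_{\frac{1}{\sigma},0}(0,1)$ genuinely requires the Hardy--Poincar\'e inequality), and then invokes the openness of the resolvent set to conclude $R(\lambda I-\mathcal{A})=\mathcal{H}$ for small $\lambda>0$ before citing the density/m-dissipativity theorems from Pazy. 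You instead solve $(I-\mathcal{A})U=F$ directly, which is the textbook Lumer--Phillips range condition; your bilinear form carries the extra zeroth-order term $\int_0^1\frac{1+h}{\sigma}u\bar{\varphi}\,dx$, and as a consequence coercivity with respect to $\|\cdot\|_1^2=\|\cdot\|^2_{\frac{1}{\sigma}}+\int_0^1\eta|\cdot_x|^2\,dx$ is immediate from $h\ge 0$ alone --- you do not actually need Proposition \ref{prop1} at this point, contrary to what you wrote (it is needed only for the continuity of the right-hand-side functional). Your route is therefore slightly more self-contained (no detour through $0\in\rho(\mathcal{A})$ and Kato's openness argument), while the paper's route has the side benefit of recording explicitly that $\mathcal{A}$ is boundedly invertible, a fact it reuses later in the proof of Theorem \ref{Exponential Stab}. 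One cosmetic remark: the paper's identity \eqref{dissipation} displays $|\lambda u|^2$ where $|v|^2$ is meant; your version is the correct one.
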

\begin{proof}
	For all $U=(u,v)^{\top}\in D(\mathcal{A})$,  we have 
	\begin{equation}\label{dissipation}
	\Re\left(\mathcal{A}U,U\right)_{\mathcal{H}}=-\int_{0}^1\dfrac{h(x)}{\sigma(x)}|\lambda u|^2dx\leq 0,
	\end{equation}
	which implies that $\mathcal{A}$ is dissipative. Now, let $F=(f_1,f_2)^{\top}\in\mathcal{H}$, we prove the existence of  $U=(u,v)^{\top}\in D(\mathcal{A})$ unique solution of the equation
	\begin{equation}\label{dissipative1}
	-\mathcal{A}U=F.
	\end{equation}
	Equivalently, we have the following system
	$$
	-v=f_1\quad \text{and}\quad -\sigma(\eta  u_x)_x+h(x) v=f_2.
	$$
	Combining the above two equations, we get 
	\begin{eqnarray}
-\sigma(\eta  u_x)_x	&=&f_2+h(x)f_1 .\label{dissipative4}
	\end{eqnarray}
Let $\varphi\in H_{\frac{1}{\sigma},0}^1(0,1)$.	Multiplying  \eqref{dissipative4} by  $\dfrac{1}{\sigma}\bar{\varphi}$  and integrate over $(0,1)$, we obtain	
\begin{equation}\label{VP}
	\Lambda(u,\varphi)=L(\varphi),\quad \forall \,\varphi\in H_{\frac{1}{\sigma},0}^1(0,1),
	\end{equation}
where
\begin{equation*}\label{coercive}
\Lambda(u,\varphi)=\int_0^1 \eta u_x\bar{\varphi}_x dx\quad \text{and}\quad L(\varphi)=\int_0^1 \frac{1}{\sigma}f_2 \bar{\varphi}dx+\int_0^1  \frac{1}{\sigma}h(x)f_1 \bar{\varphi} dx.
\end{equation*}
We have that $\Lambda$ is a sesquilinear,  continuous and coercive form on $H_{\frac{1}{\sigma},0}^1(0,1)$,  and $L$ is a continuous form on $H_{\frac{1}{\sigma},0}^1(0,1)$. Then, using Lax-Milgram Theorem, we deduce that there exists $u\in H_{\frac{1}{\sigma},0}^1(0,1)$ unique solution of the variational problem \eqref{VP}.  
Now, taking $v:=-f_1$, we have $v\in H_{\frac{1}{\sigma},0}^1(0,1)$. It remains to prove that $U \in D(\mathcal{A})$ and solves \eqref{dissipative1}. To this aim observe that equation \eqref{VP} holds for every $z\in C_c^{\infty}(0,1)$, thus we have $-(\eta  u_x)_x	=\dfrac{1}{\sigma} (f_2+h(x)f_1)$ a.e. in $(0,1)$. This implies that $-\sigma(\eta  u_x)_x	=f_2+h(x)f_1\in L^2_{\frac{1}{\sigma}}(0,1)$, i.e.  $Bu\in L^2_{\frac{1}{\sigma}}(0,1) $. Thus,  $U\in D(\mathcal{A})$. Therefore, $(u,v)\in D(\mathcal{A})$ is the unique solution of \eqref{dissipative1}. Then, $\mathcal{A}$ is an isomorphism and since $\rho(\mathcal{A})$ is open set of $\mathbb{C}$ (see Theorem 6.7 (Chapter III) in \cite{Kato01}), we easily get $R(\lambda I-\mathcal{A})=\mathcal{H}$ for a sufficiently small $\lambda>0$. This, together with the dissipativeness of $\mathcal{A}$, imply that $D(\mathcal{A})$ is dense in $\mathcal{H}$ and $\mathcal{A}$ is m-dissipative in $\mathcal{H}$ (see Theorem 4.5, 4.6 in \cite{Pazy01}). The proof is thus complete.  
\end{proof}

According to Lumer-Phillips Theorem (see \cite{Pazy01}), Proposition \ref{M-Dissipative} implies that the operator $\mathcal{A}$ generates a $C_0-$semigroup of contractions $(\mathcal T(t) )_{t \ge 0} = (e^{t\mathcal{A}})_{t \ge0}$ in $\mathcal{H}$ which gives the well-posedness of \eqref{Cauchy}. Then, we have the following result 
\begin{theoreme}
	For any $U_0\in\mathcal{H}$, problem \eqref{Cauchy} admits a unique weak solution satisfying 
	$$
	U(t)\in C^0(\R^+;\mathcal{H}). 
	$$
	Moreover, if  $U_0\in D(\mathcal{A}) $, \eqref{Cauchy} admits a unique strong solution $U$ satisfying
	$$
	U(t)\in C^1(\R^+,\mathcal{H})\cap C^0(\R^+,D(\mathcal{A})).
	$$
\end{theoreme}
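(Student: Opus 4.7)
The final theorem is a routine well-posedness statement that follows immediately from Proposition \ref{M-Dissipative} via classical semigroup theory, so the plan is essentially to apply Lumer--Phillips and then invoke the abstract Cauchy problem results for generators of $C_0$-semigroups of contractions.

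The plan is to proceed in three short steps. \textbf{Step 1.} Combine Proposition \ref{M-Dissipative} with the Lumer--Phillips theorem (as stated, e.g., in Pazy \cite{Pazy01}). Since $\mathcal{A}$ is densely defined, dissipative, and satisfies the range condition $R(\lambda I - \mathcal{A}) = \mathcal{H}$ for some $\lambda > 0$ (both of which were proven inside the proof of Proposition \ref{M-Dissipative}), it generates a $C_0$-semigroup of contractions $(e^{t\mathcal{A}})_{t\ge 0}$ on $\mathcal{H}$. \textbf{Step 2.} For arbitrary $U_0 \in \mathcal{H}$, define the mild (weak) solution by $U(t) := e^{t\mathcal{A}} U_0$. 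The strong continuity of the semigroup immediately gives $U \in C^0(\mathbb{R}^+;\mathcal{H})$, while uniqueness is a consequence of contractivity: if $U$ and $\widetilde{U}$ are two such solutions with the same initial datum, then $\|U(t)-\widetilde{U}(t)\|_{\mathcal{H}} \le \|U(0)-\widetilde{U}(0)\|_{\mathcal{H}} = 0$. \textbf{Step 3.} For $U_0 \in D(\mathcal{A})$, the standard regularity theorem for $C_0$-semigroups (see Theorem 1.3 in Chapter 4 of \cite{Pazy01}) guarantees that $t \mapsto e^{t\mathcal{A}}U_0$ is continuously differentiable with values in $\mathcal{H}$, satisfies $\frac{d}{dt}U(t) = \mathcal{A}U(t)$, and $U(t) \in D(\mathcal{A})$ for all $t \ge 0$. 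Hence $U \in C^1(\mathbb{R}^+,\mathcal{H}) \cap C^0(\mathbb{R}^+,D(\mathcal{A}))$, which is exactly the desired regularity for the strong solution.

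There is no genuine obstacle here since all the analytic work (density of $D(\mathcal{A})$, dissipativity, and surjectivity of $\lambda I - \mathcal{A}$ via Lax--Milgram in the appropriate weighted space) was already carried out in Proposition \ref{M-Dissipative}. The only thing worth double-checking is that the weighted functional setting used to define $D(\mathcal{A})$ is compatible with the abstract Lumer--Phillips framework, which it is because $\mathcal{H}$ is a Hilbert space and $\mathcal{A}$ is a closed linear operator on it. Thus the theorem follows by direct citation of the semigroup generation and regularity results.
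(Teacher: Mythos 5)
Your proposal is correct and follows exactly the route the paper takes: the theorem is stated as an immediate consequence of Proposition \ref{M-Dissipative} via the Lumer--Phillips theorem and the standard existence/regularity results for $C_0$-semigroups of contractions in \cite{Pazy01}. Your Steps 2 and 3 merely spell out the details that the paper leaves implicit, so there is no substantive difference in approach.
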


\subsection{Exponential Stability}
In this subsection we prove the exponential stability of the system \eqref{S1}  when $a$ is \eqref{WD} or \eqref{SD}. Here we define the interval $I_{j\epsilon}$ such that $j\in\{0, 1,2\}$, given by $I_{j\epsilon}:=(x_1+j\epsilon,x_2-j\epsilon)$ where $\epsilon$ is such that $\epsilon< \dfrac{x_2-x_1}{4}$. In particular, we denote by $I_0=I_{0\epsilon}=(x_1,x_2)$ and $I_{\epsilon}=I_{1\epsilon}$.  Clearly, $I_{2\epsilon} \subset I_\epsilon \subset I_0$. Moreover, for convenience, we denote by
\begin{equation}\label{M}
\left\{\begin{array}{ll}
\displaystyle
M_{0,1}:=\left\|x\frac{b}{a}\right\|_{L^{\infty}(0,x_1+2\epsilon)}\\
\displaystyle
M_{0,2}:=\left\|x\frac{a^{\prime}-b}{a}\right\|_{L^{\infty}(0,x_1+2\epsilon)}
\end{array}\right.
\text{and}\quad
\left\{\begin{array}{ll}
\displaystyle
M_{1,1}:=\left\|(x-1)\frac{b}{a}\right\|_{L^{\infty}(x_2-2\epsilon,1)}\\
\displaystyle
M_{1,2}:=\left\|(x-1)\frac{a^{\prime}-b}{a}\right\|_{L^{\infty}(x_2-2\epsilon,1)}.
\end{array}\right.
\end{equation}
\begin{Hypothesis}\label{hyp2}
Assume Hypothesis \ref{hyp1}, $a$  \eqref{WD} or \eqref{SD} and the functions $a$ and $b$ such that 
\center{$M_{0,1}+M_{1,1}<1-\frac{K}{2}$ and $M_{0,2}+M_{1,2} <1+\frac{K}{2} $.}
 
\end{Hypothesis}
\begin{Remark}
We remark that the choice of the functions $a$ and $b$ gives reliance on the length of the damped interval; i.e. the choice of $x_1$ and $x_2$ depends on the choice of the functions $a$ and $b$.
\end{Remark}
\begin{ex}\normalfont
$(1)$ Example for the (WD) case: $a(x)=\sqrt{x}$ and $b(x)=1$.  In order for  the functions $a$ and $b$ to satisfy Hypothesis \ref{hyp2}, we need
$$\sqrt{x_1+2\epsilon}+\frac{1-(x_2-2\epsilon)}{\sqrt{x_2-2\epsilon}}<\frac{3}{4}\quad \text{and}\quad \left\|(x-1)\left(\frac{1}{2x}-\frac{1}{\sqrt{x}}\right)\right\|_{L^{\infty}(x_2-2\epsilon,1)}<\frac{3}{4}.$$ 
So, it is sufficient to take $x_2-2\epsilon>\frac{41-3\sqrt{73}}{32}\approx 0.4802$ and  $0<x_1<x_1+2\epsilon<x_2-2\epsilon<x_2<1$ in order to satisfy the above inequalities.
(As a specific choice for $x_1$ and $x_2$, we take $x_1+2\epsilon=0.15$ and $x_2-2\epsilon=0.8$, see the below Figure \ref{f1} that corresponds to this specific choice).
\\
$(2)$ Example for the \eqref{SD} case: $a(x)=x\sqrt{x}$ and $b(x)=\frac{1}{8}x$. In order for  the functions $a$ and $b$ to satisfy Hypothesis \ref{hyp2}, we need
$$\sqrt{x_1+2\epsilon}+\frac{1-(x_2-2\epsilon)}{\sqrt{x_2-2\epsilon}}<2\quad\text{and}\quad \left\|(x-1)\left(\frac{3}{2x}-\frac{1}{8\sqrt{x}}\right)\right\|_{L^{\infty}(x_2-2\epsilon,1)}<\frac{1}{4}.
$$
So, it is sufficient to take $x_2-2\epsilon>0.8471$ and  $0<x_1<x_1+2\epsilon<x_2-2\epsilon<x_2<1$ in order to satisfy the above inequalities.
(As a specific choice here , we take $x_1+2\epsilon=0.3$ and $x_2-2\epsilon=0.9$, see the below Figure \ref{f2} that corresponds to this specific choice).
\end{ex}
\begin{figure}[h!]
  \centering
  \begin{subfigure}[b]{0.45\linewidth}
    \includegraphics[width=\linewidth]{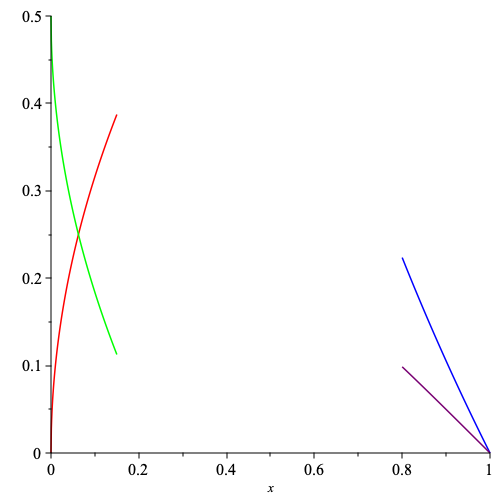}
    \caption{$a(x)=\sqrt{x},b(x)=1$}\label{f1}
  \end{subfigure}
  \begin{subfigure}[b]{0.45\linewidth}
    \includegraphics[width=\linewidth]{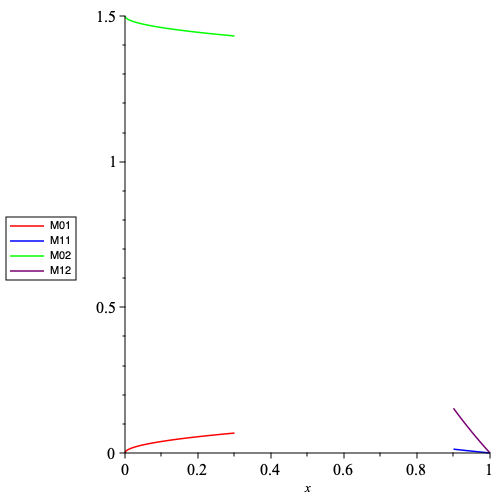}
    \caption{$a(x)=x\sqrt{x}, b(x)=\frac{x}{8}$}\label{f2}
  \end{subfigure}
  \caption{Examples for (WD) and (SD) cases}
  \label{fig:coffee}
\end{figure}
where M01, M11, M02,  and M12 represent the functions inside the norms given in \eqref{M}.

\begin{Theorem}\label{Exponential Stab}
Assume Hypothesis \ref{hyp2}. Then,  the $C_0-$semigroup of contractions $\left(\mathcal T(t)\right)_{t\geq 0}$ is exponentially  stable, i.e. there exist constants $M\geq 1$ and $\tau>0$ independent of $U_0$ such that 
$$
\left\|\mathcal T(t)_0\right\|_{\mathcal{H}}\leq Me^{-\tau t}\|U_0\|_{\mathcal{H}},\qquad t\geq 0.
$$
\end{Theorem}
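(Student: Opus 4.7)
The plan is to prove an observability-type inequality of the form
$$\int_0^T E(t)\,dt \leq C \int_0^T \int_0^1 \frac{h(x)}{\sigma(x)}|u_t|^2\,dx\,dt$$
for $T$ sufficiently large and a constant $C>0$ depending only on the data. Since the dissipation identity reads $E(T)-E(0)=-\int_0^T\int_0^1 (h/\sigma)|u_t|^2\,dx\,dt$, combining it with the above estimate and the monotonicity of $E$ would yield $E(T)\leq \gamma E(0)$ with $\gamma<1$, and the standard semigroup iteration then produces exponential decay with explicit constants $M\geq 1$ and $\tau>0$.

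To establish the observability estimate I would use the multiplier method tailored to the geometry of the damped set. Because $h$ is supported in $[x_1,x_2]$ and degenerates at both its endpoints, I would introduce a weight $p\in C^0[0,1]$ that coincides with $x$ on $[0,x_1+2\epsilon]$, with $x-1$ on $[x_2-2\epsilon,1]$, and smoothly interpolates inside $I_{2\epsilon}$. Testing the equation $u_{tt}=\sigma(\eta u_x)_x-hu_t$ against $(p/\sigma)\,u_x$ and integrating over $(0,1)\times(0,T)$ produces an identity combining a time boundary contribution (bounded by $E(0)+E(T)$), interior terms of the form $\int p'(|u_t|^2/\sigma+\eta |u_x|^2)$ which are positive in the undamped zones, error terms arising from the drift and the non-divergence form that are pointwise dominated by $p\cdot b/a$ and $p\cdot(a'-b)/a$ times the energy density, and a cross term in $h\,p\,u_tu_x$ controlled by Young's inequality. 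The vanishing of the boundary traces at the degenerate endpoint $x=0$ is guaranteed by Lemma \ref{Lemma0}, separately in the \eqref{WD} and \eqref{SD} regimes.

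The role of Hypothesis \ref{hyp2} is precisely to make the error terms strictly subordinate to the coercive energy term. On $[0,x_1+2\epsilon]$ and $[x_2-2\epsilon,1]$, where $p=x$ or $p=x-1$, the error contributions are bounded by $M_{0,1},M_{1,1}$ and $M_{0,2},M_{1,2}$, respectively, while the degeneracy of $a$ produces the factor $K/2$ through the bound $x|a'|/a\leq K$ coming from \eqref{Degeneracy}. The assumption
$M_{0,1}+M_{1,1}<1-K/2$ and $M_{0,2}+M_{1,2}<1+K/2$
is what leaves a strictly positive multiple of $\int_0^T\!\int_0^1(|u_t|^2/\sigma+\eta|u_x|^2)\,dx\,dt$ on the left after absorbing the error contributions.

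Inside the interpolation zone $I_{2\epsilon}$, where $p'$ is no longer sign-definite, I would recover the missing kinetic and potential local energy by an auxiliary cutoff multiplier argument: testing the equation against $\chi\,u_t$ with $\chi$ a smooth bump supported where $h\geq h_0>0$ and using the Hardy–Poincaré inequality of Proposition \ref{prop1} converts these interior terms into quantities controlled by the dissipation $\int(h/\sigma)|u_t|^2$. The main technical obstacle lies precisely in the simultaneous control of the drift contributions $b/a$ and the degenerate diffusion contributions $a'/a$, which enter with opposite signs in the multiplier identity and must both be kept below the thresholds imposed by the hypothesis. Once this delicate balance is achieved, the combination of the multiplier identities on the undamped intervals with the localized cutoff estimate on $I_{2\epsilon}$ gives the observability inequality, and exponential stability follows.
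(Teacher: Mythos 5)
Your route is genuinely different from the paper's: you argue in the time domain via an observability inequality and the standard iteration $E(T)\leq\gamma E(0)$, whereas the paper works in the frequency domain, reducing Theorem \ref{Exponential Stab} to the uniform resolvent bound of Proposition \ref{prop2} and invoking the Huang--Pr\"uss theorem. The multipliers are essentially parallel (your weight $p=x\varphi_1+(x-1)\varphi_2$ is exactly the paper's $\varphi$, and your localized cutoff plays the role of the paper's $\theta$), so in principle either framework can carry the estimates; the frequency-domain version spares you the time-boundary terms and the choice of a large $T$, at the cost of separately verifying $i\R\subseteq\rho(\mathcal A)$.

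There is, however, a genuine gap in the absorption step. After bounding the error terms of the multiplier identity by $(M_{0,2}+M_{1,2})\int\frac{1}{\sigma}|u_t|^2$ and $(M_{0,1}+M_{1,1})\int\eta|u_x|^2$, your argument as written leaves the coefficients $1-(M_{0,2}+M_{1,2})$ and $1-(M_{0,1}+M_{1,1})$ on the left, so it closes only under $M_{0,2}+M_{1,2}<1$, which is strictly stronger than the hypothesis $M_{0,2}+M_{1,2}<1+\frac{K}{2}$ and already fails for the paper's own (SD) example, where $x\frac{a'-b}{a}=\frac{3}{2}-\frac{\sqrt{x}}{8}$ gives $M_{0,2}>1$. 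The factor $K/2$ in the thresholds does \emph{not} come from the pointwise bound $x|a'|/a\leq K$ of \eqref{Degeneracy}, as you assert; it comes from adding to the main identity a \emph{second}, equipartition-type identity obtained by testing the equation against $\frac{K}{2\sigma}\overline{u}$ (the paper's \eqref{NN0}--\eqref{NN00}), which contributes $+\frac{K}{2}\int\frac{1}{\sigma}|u_t|^2-\frac{K}{2}\int\eta|u_x|^2$ up to controlled terms and thereby shifts the two coefficients to $1+\frac{K}{2}-(M_{0,2}+M_{1,2})$ and $1-\frac{K}{2}-(M_{0,1}+M_{1,1})$, exactly matching Hypothesis \ref{hyp2}. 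This extra multiplier is the missing idea; it transplants to the time domain without difficulty, but your proof does not close without it. A secondary slip: to recover the local \emph{potential} energy on $I_{2\epsilon}$ you must test against $\chi u$ (as in the paper's \eqref{E0}), not $\chi u_t$, since the latter only yields a localized energy identity with a non-sign-definite flux term and gives no control of $\int\chi\eta|u_x|^2$ by the dissipation.
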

\noindent According to Huang \cite{Huang01} and Pruss \cite{pruss01}, we have to check if the following conditions hold:
\begin{equation}\label{H1}\tag{${\rm H1}$}
 i\mathbb{R}\subseteq \rho\left(\mathcal{A}\right)
\end{equation}
and 
\begin{equation}\label{H2}\tag{${\rm H2}$}
\displaystyle{\sup_{\la\in \R}\|\left(i\la I-\mathcal{A}\right)^{-1}\|_{\mathcal{L}\left(\mathcal{H}\right)}=O(1).}
\end{equation}
The following proposition is a technical finding that will be used to prove Theorem \ref{Exponential Stab}.
\begin{Proposition}\label{prop2}
Assume Hypothesis \ref{hyp2} and let $(\lambda,U:=(u,v))\in \R^{\ast}\times D(\mathcal{A})$, with $\lambda\neq 0$, such that
\begin{equation}\label{eq0}
\left(i\la I-\mathcal{A}\right)U=F:=(f^1,f^2)\in \mathcal{H} ,
\end{equation}
i.e.
\begin{equation}\label{eq1}
i\la u-v=f^1\quad \text{and}\quad i\la v-\sigma(\eta u_x)_x+h(x)v=f^2.	
\end{equation}
Then,  we have the following inequality
\begin{equation}\label{prop2.9}
\|U\|_{\mathcal{H}}\leq \kappa \left(1+\frac{1}{\la^2}\right)  \|F\|_{\mathcal{H}},
\end{equation}
where $\kappa$ is a suitable positive constant independent of $\lambda$.
\end{Proposition}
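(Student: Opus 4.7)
The plan is a multiplier argument delivering \eqref{prop2.9} in three stages: a dissipation bound, an interior $H^1$-estimate on the damped window $I_{2\epsilon}$, and two outward multipliers $x\bar u_x$ and $(x-1)\bar u_x$ that propagate control of the energy density to the undamped end-regions.

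First, taking the real part of the identity $\langle(i\lambda I-\mathcal A)U,U\rangle_{\mathcal H}=\langle F,U\rangle_{\mathcal H}$ and invoking \eqref{dissipation} yields $\int_0^1(h/\sigma)|v|^2\,dx\le \|F\|_{\mathcal H}\|U\|_{\mathcal H}$. Since $h(x)=|(x-x_1)^{\alpha_1}(x-x_2)^{\alpha_2}|$ is bounded below by a positive constant on $I_{2\epsilon}$, this gives $\int_{I_{2\epsilon}}|v|^2\,dx\lesssim \|F\|_{\mathcal H}\|U\|_{\mathcal H}$, and writing $u=(v+f^1)/(i\lambda)$ converts it into an $L^2$-bound on $u$ over $I_{2\epsilon}$ with an extra factor $\lambda^{-2}$ — the source of the $\lambda^{-2}$ in \eqref{prop2.9}. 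Next, pick a cutoff $\psi$ equal to $1$ on $I_{2\epsilon}$ and supported in $I_\epsilon$ (where $h$ is still bounded below by a positive constant), multiply the second equation in \eqref{eq1} by $\psi\bar u/\sigma$ and integrate by parts. The $\sigma(\eta u_x)_x$ piece produces $\int\psi\eta|u_x|^2\,dx$ as its dominant contribution, the cutoff-derivative remainders live in $I_\epsilon\setminus I_{2\epsilon}$ where the dissipation already controls $v$ (and thus $u$), while $i\lambda v\bar u/\sigma$, after substituting $i\lambda\bar u=\bar v+\bar f^1$, pairs against $|v|^2/\sigma$. This yields
\begin{equation*}
\int_{I_{2\epsilon}}\eta|u_x|^2\,dx\;\lesssim\;\bigl(1+\lambda^{-2}\bigr)\bigl(\|F\|_{\mathcal H}\|U\|_{\mathcal H}+\|F\|_{\mathcal H}^2\bigr).
\end{equation*}

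The heart of the argument is the third stage. Take cutoffs $\chi_0,\chi_1\in C^\infty(\R)$ with $\chi_0\equiv 1$ on $[0,x_1+\epsilon]$, $\supp\chi_0\subset[0,x_1+2\epsilon]$ and symmetrically $\chi_1\equiv 1$ on $[x_2-\epsilon,1]$, $\supp\chi_1\subset[x_2-2\epsilon,1]$. Rewrite the second equation of \eqref{eq1} via $v=i\lambda u-f^1$ as $-\lambda^2 u-\sigma(\eta u_x)_x+i\lambda h\,u=f^2+(i\lambda+h)f^1$, then multiply by $x\chi_0\bar u_x$ and separately by $(x-1)\chi_1\bar u_x$, integrate on $(0,1)$ and take real parts. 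Writing $\sigma(\eta u_x)_x=au_{xx}+bu_x$ and integrating the $au_{xx}$ piece by parts produces the principal quantity
\begin{equation*}
\tfrac12\int a\,\chi_0\Bigl[1+\tfrac{x(a'-b)}{a}-\tfrac{xb}{a}\Bigr]|u_x|^2\,dx+\tfrac12\int a\,\chi_1\Bigl[1+\tfrac{(x-1)(a'-b)}{a}-\tfrac{(x-1)b}{a}\Bigr]|u_x|^2\,dx,
\end{equation*}
together with cutoff-derivative remainders supported in $I_\epsilon\setminus I_{2\epsilon}$, a kinetic contribution $\tfrac{\lambda^2}{2}\int[(x\chi_0)'+((x-1)\chi_1)']|u|^2\,dx$ that is positive on $[0,x_1+\epsilon]\cup[x_2-\epsilon,1]$ and localized in $I_\epsilon$ elsewhere, and a damping cross-term $-\lambda\operatorname{Im}\int h\,u(x\chi_0+(x-1)\chi_1)\bar u_x\,dx$. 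Boundary terms at $x=0$ vanish by the Dirichlet condition and Lemma \ref{Lemma0} (parts 3--5 ensure $x|u_x|^2\to 0$ and $(x/a)|u|^2\to 0$; in the case $K\ge 1$ the extra hypothesis $xb/a\in L^\infty(0,x_1+2\epsilon)$ needed by Lemma \ref{Lemma0}(4) is furnished by $M_{0,1}<\infty$ from Hypothesis \ref{hyp2}), while those at $x=1$ vanish because both $u$ and $x-1$ do. Hypothesis \ref{hyp2} is calibrated precisely so that the two brackets above stay bounded below by a positive constant on $[0,x_1+\epsilon]\cup[x_2-\epsilon,1]$: the combined constraints $M_{0,1}+M_{1,1}<1-K/2$ and $M_{0,2}+M_{1,2}<1+K/2$ control the drift ratios $xb/a$, $(x-1)b/a$ and the combined ratios $x(a'-b)/a$, $(x-1)(a'-b)/a$ simultaneously, while $K=\sup x|a'|/a$ enters through the identity $xa'/a=x(a'-b)/a+xb/a$. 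The cutoff remainders and the $\lambda^2|u|^2$ contribution in the damped window are absorbed via the first stage, and the damping cross-term via $hu=h(v+f^1)/(i\lambda)$ and the dissipation bound.

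Summing the three regional estimates gives $\int_0^1\eta|u_x|^2\,dx\lesssim(1+\lambda^{-2})(\|F\|_{\mathcal H}\|U\|_{\mathcal H}+\|F\|_{\mathcal H}^2)$. To recover the kinetic component $\int_0^1|v|^2/\sigma\,dx$, multiply the second equation of \eqref{eq1} by $\bar v/\sigma$, integrate by parts and take imaginary parts: using $v=i\lambda u-f^1$ one obtains $\int_0^1|v|^2/\sigma\,dx=\int_0^1\eta|u_x|^2\,dx+\lambda^{-1}O(\|F\|_{\mathcal H}\|U\|_{\mathcal H})$. Hence $\|U\|_{\mathcal H}^2\lesssim(1+\lambda^{-2})(\|F\|_{\mathcal H}\|U\|_{\mathcal H}+\|F\|_{\mathcal H}^2)$, and a Young inequality yields \eqref{prop2.9}. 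The principal obstacle is the sharp book-keeping in stage three: one must verify that the two brackets stay uniformly positive under the exact numerical thresholds of Hypothesis \ref{hyp2} (it is genuinely these thresholds $1\mp K/2$, and not cruder bounds, that allow the drift and the degeneracy to coexist), justify the vanishing of all boundary contributions at $x=0$ in both the weakly and strongly degenerate regimes via Lemma \ref{Lemma0}, and handle the damping cross-term arising because the $x\bar u_x$ multiplier crosses into the damped interval $I_0$.
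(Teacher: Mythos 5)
Your overall architecture (dissipation bound, interior cut-off estimate on the damped window, outward multipliers of the form $x\bar u_x$ and $(x-1)\bar u_x$, recovery of the kinetic energy, Young's inequality) is the same as the paper's, and Stages 1, 2 and the final recovery step are sound. The gap is in the heart of Stage 3: you use the \emph{unweighted} multipliers $x\chi_0\bar u_x$ and $(x-1)\chi_1\bar u_x$ after writing $\sigma(\eta u_x)_x=au_{xx}+bu_x$, which dumps \emph{both} drift ratios onto the gradient term, producing the bracket
$1+\tfrac{x(a'-b)}{a}-\tfrac{xb}{a}=1+\tfrac{xa'}{a}-2\tfrac{xb}{a}$,
with no compensating ratio on the kinetic term. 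Your claim that Hypothesis \ref{hyp2} is ``calibrated precisely'' so that this bracket is uniformly positive is not justified and is false in general: the worst-case lower bound Hypothesis \ref{hyp2} yields for this bracket is $1-M_{0,2}-M_{0,1}$ (or $1-K-2M_{0,1}$), and the hypothesis permits $M_{0,2}$ up to $1+\frac K2>1$ (indeed the paper's own (SD) example has $M_{0,2}\approx\frac32$), and permits $M_{0,1}$ up to $1-\frac K2$, so $2M_{0,1}$ can exceed $1-K$ even in the (WD) case. Since the whole point of the proposition is to prove stability under exactly the thresholds $1\mp\frac K2$, an argument whose positivity condition is a \emph{different} inequality does not establish the statement.

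The paper closes this by two devices you omit. First, the multiplier is weighted by $\frac1\sigma$ (i.e.\ $-2\frac{\varphi}{\sigma}\bar u_x$ with $\varphi=x\varphi_1+(x-1)\varphi_2$): using $\left(\frac{\varphi}{\sigma}\right)'=\frac{\varphi'}{\sigma}-\frac{\varphi}{\sigma}\frac{a'-b}{a}$ and $\eta'=\frac ba\eta$, the ratio $\frac{x(a'-b)}{a}$ lands on the \emph{kinetic} term $\frac{1}{\sigma}|\lambda u|^2$ and only $\frac{xb}{a}$ lands on the gradient term $\eta|u_x|^2$, giving coefficients $1-(M_{0,2}+M_{1,2})$ and $1-(M_{0,1}+M_{1,1})$ respectively. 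Second, since the kinetic coefficient may still be negative, the paper adds the identity obtained from the multiplier $\frac{K}{2\sigma}\bar u$, namely $\frac K2\int\frac{1}{\sigma}|\lambda u|^2-\frac K2\int\eta|u_x|^2=O(\|U\|_{\mathcal H}\|F\|_{\mathcal H}+\|F\|_{\mathcal H}^2)$, which transfers a weight $\frac K2$ from the gradient coefficient to the kinetic one and produces exactly $1+\frac K2-(M_{0,2}+M_{1,2})>0$ and $1-\frac K2-(M_{0,1}+M_{1,1})>0$, i.e.\ Hypothesis \ref{hyp2}. Without the $\frac1\sigma$ weighting and this auxiliary multiplier, your Stage 3 does not close under the stated hypothesis. (A minor additional remark: your unweighted kinetic term $\lambda^2\int|u|^2$ does not control $\int\frac{1}{\sigma}|\lambda u|^2$ near the degeneracy; your imaginary-part trick for $\int\frac{1}{\sigma}|v|^2$ would sidestep this, but only once the gradient estimate itself is secured.)
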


\noindent  Observe that,  by substituting $v=i\la u-f^1$  into the second equation in \eqref{eq1}, we get
\begin{equation}\label{Eq1}
\la ^2 u +\sigma(\eta u_x)_x-h(x)i\la u=-\left(f^2+i\la f^1+h(x)f^1\right).
\end{equation}
Using equation \eqref{eq1} and the Hardy-Ponicar\'e inequality given in Proposition \ref{prop1},  we get
\begin{equation}\label{lambdau}
\|\la u\|_{\frac{1}{\sigma}}\leq \|v\|_{\frac{1}{\sigma}}+\sqrt{C_{HP}}\|f^1_x\|
\leq \max\{1, c_0\}\left(\|v\|_{\frac{1}{\sigma}}+\|\sqrt{\eta}f_x^1\|\right)
\leq c_1 \left[\|U\|_{\mathcal{H}}+\|F\|_{\mathcal{H}}\right]
\end{equation}
where $c_1=\max\left(1,c_0\right)$ with $c_0=\sqrt{C_{HP}\max\limits_{x\in[0,1]}\eta^{-1}(x)}$.\\
Moreover, consider the following cut-off functions:
given a function $\theta$ in $C^\infty\left([0,1]\right)$ such that
\begin{equation}\label{theta}
0\leq\theta\leq 1,\quad \theta=1\ \ \text{on}\ \ I_{2\epsilon},\quad \text{and}\quad \theta=0\ \ \text{on}\ \ (0,1)\backslash I_{\epsilon},
\end{equation}
define
the functions $\varphi_1$ and $\varphi_2$ in $C^{\infty}([0,1])$ so that $0\leq\varphi_1, \varphi_2\leq 1$,

\begin{equation}\label{varphi12}
\varphi_1(x):=
\left\lbrace
\begin{array}{ccc}
1&\text{on}&[0, x_1+2\epsilon]\\
0&\text{on}&[ x_2-2\epsilon,1]
\end{array}
\right.
\quad
\text{and}
\quad
\varphi_2(x)=
\left\lbrace
\begin{array}{ccc}
0&\text{on}&[0, x_1+2\epsilon]\\
1&\text{on}&[x_2-2\epsilon,1].
\end{array}
\right.
\end{equation}
Finally, define $\varphi(x)=x\varphi_1(x)+(x-1)\varphi_2(x)$, $x\in [0,1]$.
Now, we introduce the following lemma that will be used in the proof of Proposition \ref{prop2}.
\begin{Lemma}\label{Lemma01}
\begin{enumerate}
\item Assume Hypothesis \ref{hyp3}. If $u\in D(\mathcal{A})$ and $K\leq 1$, then $\displaystyle \lim_{x\to 0}\eta\varphi |u_x|^2=0$.

\item Assume Hypothesis \ref{hyp3}. If $u\in D(\mathcal{A})$,  $K> 1$ and $\dfrac{xb}{a}\in L^{\infty}(0,1)$, then  $\displaystyle \lim_{x\to 0}\eta\varphi |u_x|^2=0$.
\item Assume Hypothesis \ref{hyp3}. If $u\in H^1_{\frac{1}{\sigma}}(0,1)$, then $\displaystyle\lim_{x\to 0}\dfrac{\varphi(x)}{\sigma(x)}|u(x)|^2=0$.

\item Assume Hypothesis \ref{hyp3}. If $u, f^1\in H^1_{\frac{1}{\sigma}}(0,1)$, then $\displaystyle\lim_{x\to 0}\Re\left[2i\la f^1\frac{\varphi}{\sigma}\bar{u}\right]=0.$
\end{enumerate}
\end{Lemma}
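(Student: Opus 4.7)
The proof plan is essentially a localization exercise: near $x=0$ the cut-off function $\varphi$ reduces to $x$, and the four limits then collapse to statements already established in Lemma \ref{Lemma0} (combined with the boundedness properties of $\eta$ noted earlier in the section). Specifically, since $x_1>0$ and $\varphi_1\equiv 1$, $\varphi_2\equiv 0$ on $[0,x_1+2\epsilon]$ by \eqref{varphi12}, we have $\varphi(x)=x$ on this neighborhood of $0$. Also, $\eta\in C^0[0,1]$ is strictly positive, so $0<\min_{[0,1]}\eta\leq \eta(x)\leq \max_{[0,1]}\eta$.

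For parts (1) and (2), I would simply write, for $x$ close to $0$,
\[
\eta(x)\varphi(x)|u_x(x)|^2 \leq \Bigl(\max_{[0,1]}\eta\Bigr)\, x\,|u_x(x)|^2.
\]
Under Hypothesis \ref{hyp3} and the respective restrictions on $K$ (with $xb/a\in L^\infty(0,1)$ in part (2)), Lemma \ref{Lemma0}(3)--(4) gives $\lim_{x\to 0} x|u_x|^2 = 0$, so both limits follow.

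For part (3), on $[0,x_1+2\epsilon]$ we have $\varphi(x)/\sigma(x) = x\eta(x)/a(x)$, so
\[
\frac{\varphi(x)}{\sigma(x)}|u(x)|^2 \leq \Bigl(\max_{[0,1]}\eta\Bigr)\,\frac{x}{a(x)}|u(x)|^2,
\]
and the conclusion is immediate from Lemma \ref{Lemma0}(5).

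For part (4), the plan is to estimate the cross term by the arithmetic--geometric inequality: for $x$ near $0$,
\[
\left|\Re\!\left[2i\lambda f^1(x)\,\frac{\varphi(x)}{\sigma(x)}\,\overline{u(x)}\right]\right|
\leq |\lambda|\,\frac{\varphi(x)}{\sigma(x)}\bigl(|f^1(x)|^2+|u(x)|^2\bigr).
\]
Since $u,f^1\in H^1_{\frac{1}{\sigma}}(0,1)$, applying part (3) to both $u$ and $f^1$ forces each of the two terms on the right-hand side to vanish as $x\to 0$, and hence so does the left-hand side.

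I do not anticipate a real obstacle here: the lemma is a routine adaptation of Lemma \ref{Lemma0} to the presence of the weight $\varphi$, exploiting the fact that $\varphi$ coincides with $x$ in a neighborhood of the degeneracy point and that $\eta$ is bounded above by a positive constant. The only thing to be careful about is to verify explicitly that the neighborhood on which $\varphi(x)=x$ is well separated from the support of $\varphi_2$, which is guaranteed by the choice $\epsilon<(x_2-x_1)/4$ and the definitions in \eqref{theta}--\eqref{varphi12}.
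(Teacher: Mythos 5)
Your proposal is correct and follows essentially the same route as the paper: items (1)--(3) are reduced, via the identity $\varphi(x)=x$ near $0$ and the boundedness of $\eta$, to items (3)--(5) of Lemma \ref{Lemma0}, and item (4) is handled by Young's inequality followed by the $\lim_{x\to 0}\frac{x}{a}|u|^2=0$ statement (the paper invokes Lemma \ref{Lemma0}(5) directly where you invoke part (3), which is the same thing). No gap.
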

\begin{proof}
For the proof of the first three items in the Lemma we rely on the definition of $\varphi$ defined below and the proof of Lemma 2.2 in \cite{fragnelli2022linear}.\\
For the last item (4), using the definition of $\varphi$, we have that  $\displaystyle\lim_{x\to 0}\Re\left[2i\la f^1\frac{\varphi}{\sigma}\bar{u}\right]=\lim_{x\to 0}\Re\left[2i\la f^1\frac{x\varphi_1}{\sigma}\bar{u}\right]$. So, by using Young's inequality we get
\begin{equation}
\left|\Re\left[2i\la f^1\frac{\varphi}{\sigma}\bar{u}\right]\right|\leq \frac{x|\la u|^2}{\sigma}|\varphi_1|+\frac{x|f^1|^2}{\sigma}|\varphi_1|.
\end{equation}
Then,  by using items (5) of Lemma \ref{Lemma0}, we obtain  $\displaystyle\lim_{x\to 0} \left|\Re\left[2i\la f^1\frac{\varphi}{\sigma}\bar{u}\right]\right|=0$ and thus the proof is complete.
\end{proof}
\textbf{Proof of Proposition \ref{prop2}}. We divide the proof of Proposition  \ref{prop2}  into several steps.\\
\underline{\textbf{Step 1.}} The aim of this step is to show that the solution $U = (u, v)\in D(\mathcal{A})$ of equation \eqref{eq0} satisfies the following two estimates
\begin{equation}\label{stab1}
\int _{I_{\epsilon}}\dfrac{1}{\sigma}|\la u|^2dx \leq \kappa_1\left[\|U\|_{\mathcal{H}}\|F\|_{\mathcal{H}}+\|F\|_{\mathcal{H}}^2\right]\quad \text{and}\quad \int_{I_{2\epsilon}}\eta | u_x|^2dx \leq \kappa_2 \left[\|U\|_{\mathcal{H}}\|F\|_{\mathcal{H}}+\|F\|_{\mathcal{H}}^2\right],
\end{equation}
where $\kappa_1$ and $\kappa_2$ are constants to be determined.\\
Firstly, taking the inner product of $F$ with $U$ in $\mathcal{H}$, using \eqref{dissipation} and Cauchy-Schwarz inequality,  we get
\ibt{\begin{equation}\label{eq4}
	\int_{0}^1\dfrac{h(x)}{\sigma(x)}|v|^2dx=\Re\left(\left\langle-\mathcal{A}U,U\right\rangle_{\mathcal{H}}\right)=\Re\left(\left\langle F,U\right\rangle_{\mathcal{H}}\right)\leq \|U\|_{\mathcal{H}}\|F\|_{\mathcal{H}}.
	\end{equation}
	Using the first equation in \eqref{eq1}, and using the above equation, Young and \eqref{HP} inequalities, we have
	
		\begin{align}
	&\int_{0}^1\dfrac{h(x)}{\sigma(x)}|\lambda u|^2dx \leq 2\int_{0}^1\dfrac{h(x)}{\sigma(x)}|v|^2dx+2\int_{0}^1\dfrac{h(x)}{\sigma(x)}|f^1|^2dx\leq 2\|U\|_{\mathcal{H}}\|F\|_{\mathcal{H}}+2\max _{[x_1,x_2]}h_1(x)\int_0^1 \frac{1}{\sigma}|f^1|^2dx\\
&	\leq 2\|U\|_{\mathcal{H}}\|F\|_{\mathcal{H}}+2\max _{[x_1,x_2]}h_1(x)c_0^2\int_0^1 \eta |f^1|^2dx\leq \widetilde{c_0} \left[\|U\|_{\mathcal{H}}\|F\|_{\mathcal{H}}+\|F\|_{\mathcal{H}}^2\right],\nonumber
	 		\end{align}
where $\widetilde{c_0}=2\max\{1, \max\limits _{[x_1,x_2]}h_1(x)c_0^2\}$.
Using the fact that $h_1(x)\neq 0$ in $I_{\epsilon}$ and $h_1\in L^{\infty}(I_{\epsilon})$, we have $\min\limits_{x\in\overline{ I_{\epsilon}}}h_1(x)\leq h_1(x) \leq \max\limits_{x\in \overline{I_{\epsilon}}}h_1(x)$.
	Thus, 
	$$\int_{I_{\epsilon}}\dfrac{h_1(x)}{\sigma}|\la u|^2dx\leq \widetilde{c_0}[ \|U\|_{\mathcal{H}}\|F\|_{\mathcal{H}}+\|F\|^2_{\mathcal{H}}]\quad\Longrightarrow \quad \int _{I_{\epsilon}}\dfrac{1}{\sigma}|\la u|^2dx     \leq\kappa_1\left[\|U\|_{\mathcal{H}}\|F\|_{\mathcal{H}}+\|F\|_{\mathcal{H}}^2\right]$$
	with $\kappa_1=\dfrac{\widetilde{c_0}}{\min\limits_{x\in \overline{I_{\epsilon}}}h_1(x)}$ and thus the first estimation in \eqref{stab1} is proved.\\}
Secondly, multiplying \eqref{Eq1} by $\dfrac{1}{\sigma}\theta \bar{u}$,  integrating over $(0,1)$,  and taking the real part we get,
\begin{equation}\label{E0}
\int_0^1 \dfrac{1}{\sigma}\theta |\la u|^2dx+\dfrac{1}{2}\int_0^1(\eta \theta^\prime)^\prime|u|^2dx-\int_0^1\eta\theta|u_x|^2dx=-\Re\left(\int_0^1\dfrac{\theta}{\sigma}\left(f^2+i\la f^1+h(x)f^1\right)\bar{u}dx\right).
\end{equation}
In order to estimate the terms in the previous equality we use the first inequality in \eqref{stab1},  the definition of the function $\theta$ and the fact that $\supp \theta= \overline{I_{\epsilon}}$,  the Cauchy-Schwarz inequality and \eqref{lambdau}  obtaining
\begin{equation}\label{E1}
\int_0^1\dfrac{1}{\sigma}\theta|\la u|^2dx\leq \int_{I_{\epsilon}}\dfrac{1}{\sigma}|\la u|^2dx\leq \kappa_1\left[\|U\|_{\mathcal{H}}\|F\|_{\mathcal{H}}+\|F\|_{\mathcal{H}}^2\right],
\end{equation}

\begin{equation}\label{E2}
\left|\dfrac{1}{2}\int_0^1(\eta \theta^\prime)^\prime|u|^2dx\right|\leq \dfrac{1}{2}\left\|(\eta \theta^\prime)^\prime\sigma\right\|_{L^{\infty}(I_{\epsilon})}\int _{I_{\epsilon}}\dfrac{1}{\sigma}|u|^2dx\leq \dfrac{c_2}{\la^2}\left[\|U\|_{\mathcal{H}}\|F\|_{\mathcal{H}}+\|F\|_{\mathcal{H}}^2\right],
\end{equation}

\begin{equation}\label{E4}
\left|\int_0^1 \dfrac{i\la\theta}{\sigma}f^1\bar{u}dx\right|\leq \int_0^1\frac{1}{\sqrt{\sigma}}|f^1|\frac{1}{\sqrt{\sigma}}|\la u|dx
\leq c_3  \left[\|U\|_{\mathcal{H}}\|F\|_{\mathcal{H}}+\|F\|_{\mathcal{H}}^2\right],
\end{equation}
where $\displaystyle c_2=\dfrac{1}{2}\|(\eta \theta^\prime)^\prime\sigma\|_{L^{\infty}(I_{\epsilon})}\kappa_1$, and $c_3=c_0c_1$.
Now, by \eqref{HP}, we get

\begin{equation}\label{E3}
\left|\int_0^1\dfrac{\theta}{\sigma}f^2\bar{u}dx\right|\leq \left(\int_0^1 \dfrac{1}{\sigma}|f^2|^2dx\right)^{1/2}\left(\int_0^1 \dfrac{1}{\sigma}|u|^2dx\right)^{1/2}\leq c_0 \left[\|U\|_{\mathcal{H}}\|F\|_{\mathcal{H}}+\|F\|_{\mathcal{H}}^2\right],
\end{equation}
 and
\begin{equation}\label{E5}
\left| \int_0^1\dfrac{h(x)}{\sigma}\theta f^1\bar{u}\right| \leq \max\limits_{x\in [x_1,x_2]} h(x)\int_0^1 \frac{1}{\sqrt{\sigma}}|f^1| \frac{1}{\sqrt{\sigma}}|u|dx\leq  c_4\left[\|U\|_{\mathcal{H}}\|F\|_{\mathcal{H}}+\|F\|_{\mathcal{H}}^2\right],
\end{equation}
where $c_4=c_0^2\,\max\limits_{x\in [x_1,x_2]}h(x)$.
Thus, using equations  \eqref{E1}-\eqref{E5} in \eqref{E0}, we get
\begin{equation}
\int_{I_{2\epsilon}}\eta\theta | u_x|^2dx \leq \kappa_2 \left[\|U\|_{\mathcal{H}}\|F\|_{\mathcal{H}}+\|F\|_{\mathcal{H}}^2\right]
\end{equation}
where $\kappa_2=\kappa_2^{\prime}+\dfrac{c_2}{\la ^2}$ with $\kappa_2^{\prime}=\kappa_1+c_0+c_3+c_4$. Hence, thanks to the definition of $\theta$ we get the second estimate in  \eqref{stab1}.

\underline{\textbf{Step 2.}} The aim of this step is to show that the solution $U = (u, v)\in D(\mathcal{A})$ of \eqref{eq0} satisfies the following equation
\begin{equation}\label{e2}
\begin{array}{l}
\displaystyle\int_0^1 \frac{\varphi^{\prime}}{\sigma}|\la u|^2dx+\int_0^1 \eta \varphi^{\prime}|u_x|^2dx
=\int_0^1\frac{\varphi}{\sigma}\left(\frac{a^\prime-b}{a}\right)|\la u|^2dx+\int_0^1\varphi\frac{b}{a}\eta|u_x|^2dx-\lim_{x\to 0}\eta\varphi |u_x|^2
\\ 
\displaystyle
-\la^2\lim_{x\to 0}\dfrac{\varphi(x)}{\sigma(x)}|u(x)|^2
-2\Re\left(\int_0^1 \dfrac{h(x)}{\sigma}i\la u\varphi\bar{u}_xdx\right)+2\Re\left(\int_0^1\left(f^2+h(x)f^1\right)\dfrac{\varphi}{\sigma}\overline{u_x}dx\right)
\\ 
\displaystyle-2\Re\left(i\int_0^1 \left(\frac{f^1\varphi}{\sigma}\right)_x\la \bar{u}dx\right)-2\lim_{x\to 0}\Re\left[i\la f^1(x)\frac{\varphi(x)}{\sigma(x)}\bar{u}(x)\right].
\end{array}
\end{equation}
First, multiplying  \eqref{Eq1} by $-2\dfrac{\varphi}{\sigma}\bar{u}_x$, integrating over $(0,1)$ and taking the real part,   we get
\begin{equation}\label{e1}
\begin{array}{l}
\displaystyle\int_0^1 \left(\frac{\varphi}{\sigma}\right)^{\prime}|\la u|^2dx+\lim_{x\to 0}\dfrac{\varphi(x)}{\sigma(x)}|\la u(x)|^2-2\Re\left(\int_0^1 (\eta u_x)_x\varphi \bar{u}_xdx\right)+2\Re\left(\int_0^1 \dfrac{h(x)}{\sigma}i\la u\varphi\bar{u}_xdx\right)\\ \displaystyle =2\Re\left(\int_0^1\left(f^2+h(x)f^1\right)\dfrac{\varphi}{\sigma}\overline{u_x}dx\right)-2\Re\left(i\int_0^1 \left(\frac{f^1\varphi}{\sigma}\right)_x\la \bar{u}dx\right)-2\lim_{x\to 0}\Re\left[i\la f^1(x)\frac{\varphi(x)}{\sigma(x)}\bar{u}(x)\right].

\end{array}
\end{equation}
For the first term in the above equation, we have that $\displaystyle\left(\frac{\varphi}{\sigma}\right)^{\prime}=\frac{\varphi^{\prime}}{\sigma}-\frac{\varphi}{\sigma}\left(\frac{a^\prime-b}{a}\right)$ and $\eta'=\dfrac{b}{a}\eta$,  
then 
\begin{equation*}
 \int_0^1 \left(\frac{\varphi}{\sigma}\right)^{\prime}|\la u|^2dx=\int_0^1 \frac{\varphi^{\prime}}{\sigma}|\la u|^2dx-\int_0^1\frac{\varphi}{\sigma}\left(\frac{a^\prime-b}{a}\right)|\la u|^2dx
\end{equation*}
and 
\begin{equation*}
\begin{array}{l}
\displaystyle
-2\Re\left(\int_0^1 (\eta u_x)_x\varphi \bar{u}_xdx\right)=2\Re\left(\int_0^1\eta u_x(\varphi \bar{u}_x)_xdx\right)+2\lim_{x\to 0}\eta(x)\varphi(x)|u_x(x)|^2=2\int_0^1\eta\varphi^{\prime}|u_x|^2dx 
\\ \displaystyle
-\int_0^1 (\eta\varphi)^{\prime}|u_x|^2dx
+\lim_{x\to 0}\eta(x)\varphi(x) |u_x|^2=\int_0^1 \eta\varphi^{\prime}|u_x|^2dx-\int_0^1 \varphi \frac{b}{a}\eta|u_x|^2dx+\lim_{x\to 0}\eta(x)\varphi(x) |u_x(x)|^2.
\end{array}
\end{equation*}
Substituting the above two equations into \eqref{e1}, we get \eqref{e2}.

\underline{\textbf{Step 3.}} The aim of this step is to estimate the terms on the right hand side of \eqref{e2}.\\
$\bullet$ We start by the term $\displaystyle 2\Re\left(\int_0^1 \dfrac{h(x)}{\sigma}i\la u\varphi\bar{u}_xdx\right)$. By the Cauchy-Schwarz inequality, \eqref{eq4} and the inequality: $\sqrt{p+q}\leq \sqrt{p}+\sqrt{q}$ for all $p,q\geq 0$, we get
\begin{equation}\label{E11}
\begin{array}{ll}
\displaystyle
2\left|\Re \left(\int_0^1 \frac{h(x)}{\sigma}i\la u\varphi\bar{u}_xdx\right) \right| \leq 2\max_{\overline{I}_0}\sqrt{h(x)}\|\varphi\|_{\infty} \int_{I_0}\frac{|\la|\sqrt{h(x)}}{\sqrt{\sigma}}|u|\frac{\sqrt{\eta}}{\sqrt{a}}|u_x|dx
\\
\displaystyle
\ibt{\leq  2\max_{x\in\overline{I_0}}\sqrt{h(x)}\left\|\varphi\right\|_{\infty}\max_{x\in \overline{I_0}}\left(\frac{1}{\sqrt{a}}\right)\sqrt{\widetilde{c_0}}\left(\|U\|_{\mathcal{H}}\|F\|_{\mathcal{H}}+\|F\|_{\mathcal{H}}^2\right)^{1/2}\|U\|_{\mathcal{H}}}
\\
\displaystyle
\ibt{\leq 2\max_{x\in\overline{I_0}}\sqrt{h(x)} \left\|\varphi\right\|_{\infty}\max_{x\in \overline{I_0}}\left(\frac{1}{\sqrt{a}}\right)\sqrt{\widetilde{c_0}}\left(\|U\|_{\mathcal{H}}^{3/2}\|F\|_{\mathcal{H}}^{1/2}+\|F\|_{\mathcal{H}}\|U\|_{\mathcal{H}}\right)}
\\
\displaystyle
\ibt{\leq c_5 \|U\|_{\mathcal{H}}^{3/2}\|F\|_{\mathcal{H}}^{1/2}+c_5\left[\|U\|_{\mathcal{H}}\|F\|_{\mathcal{H}}+\|F\|_{\mathcal{H}}^2\right]},
\end{array}
\end{equation}
where \ibt{$\displaystyle c_5=2\max_{x\in\overline{I_0}}\sqrt{h(x)} \left\|\varphi\right\|_{\infty}\max_{x\in \overline{I_0}}\left(\frac{1}{\sqrt{a}}\right)\sqrt{\widetilde{c_0}}$}.\\
$\bullet$ 
For the term $\displaystyle 2\Re\left(\int_0^1h(x)f^1\dfrac{\varphi}{\sigma}\overline{u_x}dx\right)$, observe that by the Cauchy-Schwarz and the Hardy-Poincar\'e inequalities we get
\begin{equation}\label{E21}
\begin{array}{ll}
\displaystyle \left|2\Re\left(\int_0^1h(x)f^1\dfrac{\varphi}{\sigma}\overline{u_x}dx\right)\right|\leq 2\max\limits_{x\in \overline{I_0}}h(x)\|\varphi\|_{\infty}\int_{I_0}\dfrac{|f^1|}{\sqrt{\sigma}}\dfrac{\sqrt{\eta}}{\sqrt{a}}|u_x|dx\\
\displaystyle\leq 2\max\limits_{x\in\overline{I_0}}h(x)\|\varphi\|_{\infty} \max\limits_{x\in\overline{I_0}}\left(\dfrac{1}{\sqrt{a}}\right)\left(\ma{\int_0^1}\dfrac{|f^1|^2}{\sigma}dx\right)^{1/2}\left(\int_{I_0}\eta |u_x|^2dx\right)^{1/2}
\leq c_6 \left[\|U\|_{\mathcal{H}}\|F\|_{\mathcal{H}}+\|F\|_{\mathcal{H}}^2\right]
\end{array}
\end{equation}
where $\displaystyle c_6=2c_0\max\limits_{x\in\overline{I_0}}h(x)\|\varphi\|_{\infty} \max\limits_{x\in\overline{I_0}}\left(\dfrac{1}{\sqrt{a}}\right)$.\\
$\bullet$ Now, consider the term $\displaystyle  2\Re\left(\int_0^1f^2\dfrac{\varphi}{\sigma}\overline{u_x}dx\right).$ By definition of $\varphi$, we have
\begin{equation}\label{ee1}
 2\Re\left(\int_0^1f^2\dfrac{\varphi}{\sigma}\overline{u_x}dx\right)= 2\Re\left(\int_0^1\dfrac{x\varphi_1}{\sigma}f^2\overline{u_x}dx\right)+2\Re\left(\int_0^1\dfrac{(x-1)\varphi_2}{\sigma}f^2\overline{u_x}dx\right).
\end{equation}
First of all, consider the first term in \eqref{ee1}. Obviously, we can rewrite it as
\begin{equation}\label{ee2}
2\Re\left(\int_0^1\dfrac{x\varphi_1}{\sigma}f^2\bar{u}_xdx\right)=2\Re\left(\int_0^{x_1+2\epsilon}\dfrac{x}{\sigma}f^2\overline{u_x}dx\right)+2\Re\left(\int_{I_{2\epsilon}}\dfrac{x\varphi_1}{\sigma}f^2\bar{u}_xdx\right).
\end{equation}
Using the Cauchy-Schwarz inequality, the fact that $a$ doesn't vanish on the interval $I_{2\epsilon}$, the monotonicity of the function $\dfrac{x}{\sqrt{a}}$ in all the interval $(0,1]$ if $a$ is \eqref{WD} or \eqref{SD}  and the fact that $K<2$,  we obtain
\begin{equation}\label{ee2,1}
\left|2\Re\left(\int_0^{x_1+2\epsilon}\dfrac{x}{\sigma}f^2\overline{u_x}dx\right)\right|\leq \frac{2(x_1+2\epsilon)}{\sqrt{a(x_1+2\epsilon})}\int_{0}^{x_1+2\epsilon}\frac{1}{\sqrt{\sigma}}|f^2| \sqrt{\eta}|{u}_x|dx\leq c_7 \left[\|U\|_{\mathcal{H}}\|F\|_{\mathcal{H}}+\|F\|_{\mathcal{H}}^2\right]
\end{equation}
and 
\begin{equation}\label{ee2,2}
\begin{array}{ll}
\displaystyle
\left|2\Re\left(\int_{I_{2\epsilon}}\dfrac{x\varphi_1}{\sigma}f^2\overline{u_x}dx\right)\right|\leq 2(x_2-2\epsilon)\max_{x\in \overline{I_{2\epsilon}}}\left(\frac{1}{\sqrt{a}}\right)\int_{I_{2\epsilon}}\frac{1}{\sqrt{\sigma}}|f^2| \sqrt{\eta}|{u}_x|dx\leq c_8  \left[\|U\|_{\mathcal{H}}\|F\|_{\mathcal{H}}+\|F\|_{\mathcal{H}}^2\right],
\end{array}
\end{equation}
where  $\displaystyle c_7=\frac{2(x_1+2\epsilon)}{\sqrt{a(x_{1+2\epsilon})}}$ and $\displaystyle c_8=2(x_2-2\epsilon)\max_{x\in \overline{I_{2\epsilon}}}\left(\frac{1}{\sqrt{a}}\right)$.
Thus, substituting equations \eqref{ee2,1} and \eqref{ee2,2} into \eqref{ee2}, we get
\begin{equation}\label{ee3}
\left|2\Re\left(\int_0^1\dfrac{x\varphi_1}{\sigma}f^2\bar{u}_xdx\right)\right|\leq c_9 \left[\|U\|_{\mathcal{H}}\|F\|_{\mathcal{H}}+\|F\|_{\mathcal{H}}^2\right]
\end{equation}
where $c_9=c_7+c_8$.\\
Now, for the second term in \eqref{ee1}, we have
\begin{equation}\label{ee4}
\left|2\Re\left(\int_0^1\dfrac{(x-1)\varphi_2}{\sigma}f^2\overline{u_x}dx\right)\right|\leq 2\int_{x_1+2\epsilon}^1\dfrac{|x-1|}{\sqrt{\sigma}}|f^2|\frac{\sqrt{\eta}}{\sqrt{a}}|\bar{u}_x|dx\leq c_{10} \left[\|U\|_{\mathcal{H}}\|F\|_{\mathcal{H}}+\|F\|_{\mathcal{H}}^2\right]
\end{equation}
where $\displaystyle c_{10}=2(1-(x_1+2\epsilon))\max\limits_{x\in [x_1+2\epsilon,1] }\left(\frac{1}{\sqrt{a}}\right)$.
Using \eqref{ee3} and \eqref{ee4} in \eqref{ee1}, we get
\begin{equation}\label{E31}
\left|2\Re\left(\int_0^1f^2\dfrac{\varphi}{\sigma}\overline{u_x}dx\right)\right|\leq c_{11}\left[\|U\|_{\mathcal{H}}\|F\|_{\mathcal{H}}+\|F\|_{\mathcal{H}}^2\right]
\end{equation}
where $\displaystyle c_{11}=c_9+c_{10}$.\\
$\bullet$ Finally, consider the last term $\displaystyle  2\Re\left(i\int_0^1 \left(\frac{f^1\varphi}{\sigma}\right)_x\la \bar{u}dx\right)$. Hence, we have
\begin{equation}\label{ee5}
2\Re\left(i\int_0^1 \left(\frac{f^1\varphi}{\sigma}\right)_x\la \bar{u}dx\right)=2\Re\left(i\int_0^1 \frac{\varphi f^1_x}{\sigma}\la \bar{u}dx\right)+2\Re\left(i\int_0^1 \left(\frac{\varphi}{\sigma}\right)^{\prime}f^1\la \bar{u}dx\right).
\end{equation}
Using the definition of $\varphi$, the first term on the right hand side of \eqref{ee5}, becomes
\begin{equation}\label{ee6}
2\Re\left(i\int_0^1 \frac{\varphi f^1_x}{\sigma}\la \bar{u}dx\right)=2\Re\left(i\int_0^{x_2-2\epsilon} \frac{x}{\sigma}\varphi_1 f^1_x\la \bar{u}dx\right)+2\Re\left(i\int_{x_1+2\epsilon}^1 \frac{(x-1)}{\sigma}\varphi_2 f^1_x\la \bar{u}dx\right).
\end{equation}
Now, we need to estimate the terms in \eqref{ee6}. Thus, using again the monotonicity of $\dfrac{x}{\sqrt{a}}$ and \eqref{lambdau}, we obtain
\begin{equation}\label{ee6,1}
\left|2\Re\left(i\int_0^{x_2-2\epsilon} \frac{x}{\sigma}\varphi_1 f^1_x\la \bar{u}dx\right)\right| 
\leq 2\int_0^{x_2-2\epsilon} \frac{x}{\sqrt{a}}|\varphi_1| \sqrt{\eta}|f^1_x|\frac{1}{\sqrt{\sigma}}|\la u|dx\leq c_{12} \left[\|U\|_{\mathcal{H}}\|F\|_{\mathcal{H}}+\|F\|_{\mathcal{H}}^2\right]
\end{equation}
and 
\begin{equation}\label{ee6,2}
\left|2\Re\left(i\int_{x_1+2\epsilon}^1 \frac{(x-1)}{\sigma}\varphi_2 f^1_x\la \bar{u}dx\right)\right|\leq 2\int_{x_1+2\epsilon}^1 \frac{|x-1|}{\sqrt{a}}\sqrt{\eta} |f^1_x|\frac{1}{\sqrt{\sigma}}|\la \bar{u}|dx\leq c_{13}  \left[\|U\|_{\mathcal{H}}\|F\|_{\mathcal{H}}+\|F\|_{\mathcal{H}}^2\right],
\end{equation}
where $\displaystyle c_{12}=2\frac{(x_2-2\epsilon)}{\sqrt{a(x_2-2\epsilon)}}c_1$ and $c_{13}=c_{10}c_1$.
Using \eqref{ee6,1} and \eqref{ee6,2} in \eqref{ee6}, we get
\begin{equation}\label{ee7}
\left|2\Re\left(i\int_0^1 \frac{\varphi f^1_x}{\sigma}\la \bar{u}dx\right)\right|\leq c_{14}  \left[\|U\|_{\mathcal{H}}\|F\|_{\mathcal{H}}+\|F\|_{\mathcal{H}}^2\right],
\end{equation}
where $c_{14}=c_{12}+c_{13}$.\\
Moving to the second term in \eqref{ee5}, we have
\begin{equation}\label{ee8}
2\Re\left(i\int_0^1 \left(\frac{\varphi}{\sigma}\right)^{\prime}f^1\la \bar{u}dx\right)=2\Re\left(i\int_0^1 \dfrac{\varphi^{\prime}}{\sigma} f^1\la \bar{u}dx\right)+2\Re\left(i\int_0^1 \dfrac{\varphi}{\sigma}\left(\dfrac{a^{\prime}-b}{a}\right) f^1\la \bar{u}dx\right).
\end{equation}
Using \eqref{lambdau}, the first term in the right hand side of \eqref{ee8} can be estimated as \begin{equation}\label{ee8,1}
\left|2\Re\left(i\int_0^1 \dfrac{\varphi^{\prime}}{\sigma} f^1\la \bar{u}dx\right)\right|\leq 2\|\varphi^{\prime}\|_{\infty}\ma{\int_0^1} \frac{1}{\sqrt{\sigma}}|f^1| \frac{1}{\sqrt{\sigma}}|\la \bar{u}|dx\leq c_{15} \left[\|U\|_{\mathcal{H}}\|F\|_{\mathcal{H}}+\|F\|_{\mathcal{H}}^2\right],
\end{equation}
where $\displaystyle c_{15}= 2\|\varphi^{\prime}\|_{\infty}c_0c_1$.\\
On the other hand, using the definition of $\varphi$ and Hypothesis \ref{hyp2}, the second term in the right hand side in  \eqref{ee8} can be estimated in the following way:
\begin{equation}\label{ee8,2}
\begin{array}{l}
\displaystyle
\left| 2\Re\left(i\int_0^1 \dfrac{ \varphi}{\sigma}\left(\dfrac{a^{\prime}-b}{a}\right) f^1\la \bar{u}dx\right)\right| 
\leq 2 \left(M_{0,2}+M_{1,2}+M_{2\epsilon}\right) \|f^1\| _{\frac{1}{\sigma}}\|\la u\|_{\frac{1}{\sigma}}\\
\displaystyle
\leq 2c_0 \left(1+\frac{K}{2}+M_{2\epsilon}\right)\|\sqrt{\eta} f^1_x\|\|\la u\|_{\frac{1}{\sigma}} 
\leq c_{16} \left[\|U\|_{\mathcal{H}}\|F\|_{\mathcal{H}}+\|F\|_{\mathcal{H}}^2\right],
\end{array}
\end{equation}
where $c_{16}=2c_0c_1 \left(1+\frac{K}{2}+M_{2\epsilon}\right)$, where $M_{2\epsilon}:= \left\|x\frac{a^{\prime}-b}{a}\right\|_{L^{\infty}({I_{2\epsilon}})}+\left\|(x-1)\frac{a^{\prime}-b}{a}\right\|_{L^{\infty}({I_{2\epsilon}})}$. \\
Then, by \eqref{ee8}, \eqref{ee8,1} and \eqref{ee8,2}  we obtain 
\begin{equation}\label{ee91}
\left|2\Re\left(i\int_0^1 \left(\frac{\varphi}{\sigma}\right)^{\prime}f^1\la \bar{u}dx\right)\right|\leq  c_{17} \left[\|U\|_{\mathcal{H}}\|F\|_{\mathcal{H}}+\|F\|_{\mathcal{H}}^2\right],
\end{equation}
where $c_{17}=c_{15}+c_{16}$,
and using  \eqref{ee7} and \eqref{ee91} in \eqref{ee5}, we can conclude that
\begin{equation}\label{E41}
\left|2\Re\left(i\int_0^1 \left(\frac{f^1\varphi}{\sigma}\right)_x\la \bar{u}dx\right)\right|\leq c_{18} \left[\|U\|_{\mathcal{H}}\|F\|_{\mathcal{H}}+\|F\|_{\mathcal{H}}^2\right]
\end{equation}
where $c_{18}=c_{14}+c_{17}$.\\

\underline{\textbf{Step 4.}}  The aim of this step is to show  the following two estimates
 \begin{equation}\label{last1}
\int_0^{1}\frac{1}{\sigma}|\la u|^2\leq \kappa_3 \left( \|U\|_{\mathcal{H}}^{3/2}\|F\|_{\mathcal{H}}^{1/2}+(1+\frac{1}{\la ^2})\left[\|U\|_{\mathcal{H}}\|F\|_{\mathcal{H}}+\|F\|_{\mathcal{H}}^2\right]\right)
\end{equation}
and 
\begin{equation}\label{last2}
\int_{0}^1\eta |u_x|^2 dx\leq \kappa_4\left( \|U\|_{\mathcal{H}}^{3/2}\|F\|_{\mathcal{H}}^{1/2}+(1+\frac{1}{\la ^2})\left[\|U\|_{\mathcal{H}}\|F\|_{\mathcal{H}}+\|F\|_{\mathcal{H}}^2\right]\right),
\end{equation}
where $\kappa_3=\frac{1}{1+\frac{K}{2}-(M_{0,2}+M_{1,2})}c_{23}$ and $\kappa_4=\frac{1}{1-\frac{K}{2}-(M_{0,1}+M_{1,1})}c_{23}$, for a suitable positive constant $c_{23}$ that will be determined below.\\
In order to prove the above estimates we start considering the first two terms on the left hand side of \eqref{e2},  and using the fact that $\varphi^{\prime}=\varphi_1+\varphi_2+\hat{\varphi}$ where $\hat{\varphi}=x\varphi_1^{\prime}+(x-1)\varphi_2^{\prime}$ with $\supp \hat{\varphi}=\overline{I_{2\epsilon}}$, we obtain
\begin{equation}\label{ee9}
\begin{array}{lr}
\displaystyle
\int_0^1 \frac{\varphi^{\prime}}{\sigma}|\la u|^2dx+\int_0^1 \eta \varphi^{\prime}|u_x|^2dx=\int_0^{x_1+2\epsilon} \frac{1}{\sigma}|\la u|^2dx+\int_{x_2-2\epsilon}^1 \frac{1}{\sigma}|\la u|^2dx+\int_{x_2-2\epsilon}^1\eta |u_x|^2dx\\
\displaystyle

+\int_0^{x_1+2\epsilon}\eta |u_x|^2+\int_{I_{2\epsilon}} \frac{\varphi_1+\varphi_2}{\sigma}|\la u|^2dx+\int_{I_{2\epsilon}}(\varphi_1+\varphi_2 )\eta |u_x|^2dx
\displaystyle
+\int_0^1 \frac{\hat{\varphi}}{\sigma}|\la u|^2dx+\int_0^1\hat{\varphi}\eta |u_x|^2dx.
\end{array}
\end{equation}
Estimating the last four terms  in the above equation, using  \eqref{stab1} and the fact that $\supp \hat{\varphi}=\overline{I}_{2\epsilon}$, we get
\begin{equation}\label{ee9,1}
\int_0^1 \frac{\hat{\varphi}}{\sigma}|\la u|^2dx\leq \|\hat{\varphi}\|_{L^{\infty}(I_{2\epsilon})}\int_{I_{2\epsilon}}\frac{1}{\sigma}|\la u|^2dx\leq c_{19} \left[\|U\|_{\mathcal{H}}\|F\|_{\mathcal{H}}+\|F\|_{\mathcal{H}}^2\right],
\end{equation}
\begin{equation}\label{ee9,2}
\int_0^1\hat{\varphi}\eta |u_x|^2dx \leq c_{20}  \left[\|U\|_{\mathcal{H}}\|F\|_{\mathcal{H}}+\|F\|_{\mathcal{H}}^2\right],
\end{equation}
and
\begin{equation}\label{ee9new}
\int_{I_{2\epsilon}} \frac{\varphi_1+\varphi_2}{\sigma}|\la u|^2dx+\int_{I_{2\epsilon}}(\varphi_1+\varphi_2 )\eta |u_x|^2dx\leq (2\kappa_1+2(\kappa_2^{\prime}+\frac{c_2}{\la^2})) \left[\|U\|_{\mathcal{H}}\|F\|_{\mathcal{H}}+\|F\|_{\mathcal{H}}^2\right],
\end{equation}
where $c_{19}= \|\hat{\varphi}\|_{L^{\infty}(I_{2\epsilon})}\kappa_1$ and $c_{20}=\|\hat{\varphi}\|_{L^{\infty}(I_{2\epsilon})}\left(\kappa_2^{\prime}+\dfrac{c_2}{\la ^2}\right)$.\\
Finally, using \eqref{E11}, \eqref{E21}, \eqref{E31}, \eqref{E41}, \eqref{ee9}, \eqref{ee9,1},  \eqref{ee9,2}, \eqref{ee9new}, \eqref{stab1} and Lemma \ref{Lemma01} in equation \eqref{e2}, we obtain
\begin{equation}
\begin{array}{ll}
\displaystyle
\int_0^{x_1+2\epsilon} \frac{1}{\sigma}|\la u|^2dx+\int_{x_2-2\epsilon}^1 \frac{1}{\sigma}|\la u|^2dx+\int_{x_2-2\epsilon}^1\eta |u_x|^2dx+\int_0^{x_1+2\epsilon}\eta |u_x|^2dx\\
\displaystyle
\leq (M_{0,2}+M_{1,2})\int_0^1\frac{1}{\sigma}|\la u|^2dx+(M_{0,1}+M_{1,1})\int_0^1\eta|u_x|^2dx+c_5 \|U\|_{\mathcal{H}}^{3/2}\|F\|_{\mathcal{H}}^{1/2}\\
\displaystyle
+ c_{21}\left[\|U\|_{\mathcal{H}}\|F\|_{\mathcal{H}}+\|F\|_{\mathcal{H}}^2\right]+

\frac{c_{22}}{\la ^2}\left[\|U\|_{\mathcal{H}}\|F\|_{\mathcal{H}}+\|F\|_{\mathcal{H}}^2\right],
\end{array}
\end{equation}
Then,  adding the above equation with the two equations in \eqref{stab1}, we get
\begin{equation}\label{MM00}
\begin{array}{ll}
\displaystyle
\int_0^{1} \frac{1}{\sigma}|\la u|^2dx+\int_0^{1}\eta |u_x|^2dx\leq (M_{0,2}+M_{1,2})\int_0^1\frac{1}{\sigma}|\la u|^2dx\\
\displaystyle
+(M_{0,1}+M_{1,1})\int_0^1\eta|u_x|^2dx+c_5 \|U\|_{\mathcal{H}}^{3/2}\|F\|_{\mathcal{H}}^{1/2}\\
\displaystyle
+ c_{21}\left[\|U\|_{\mathcal{H}}\|F\|_{\mathcal{H}}+\|F\|_{\mathcal{H}}^2\right]+

\frac{c_{22}}{\la ^2}\left[\|U\|_{\mathcal{H}}\|F\|_{\mathcal{H}}+\|F\|_{\mathcal{H}}^2\right],
\end{array}
\end{equation}
where  $c_{21}=c_5+c_6+c_{11}+c_{18}+c_{19}+\|\hat{\varphi}\|_{L^{\infty}(I_{2\epsilon})}\kappa_2^{\prime}+M_{1,2\epsilon}\kappa_1+M_{0,2\epsilon}\kappa_2^{\prime}+2\kappa_1+2\kappa_2^{\prime}$ ,  $c_{22}=\|\hat{\varphi}\|_{L^{\infty}}c_2+2c_2+M_{0,2\epsilon}c_2$ such that $M_{1,2\epsilon}:=\left\|x\frac{a^{\prime}-b}{a}\right\|_{L^{\infty}(I_{2\epsilon})}+\left\|(x-1)\frac{a^{\prime}-b}{a}\right\|_{L^{\infty}(I_{2\epsilon})}$ and $M_{0,2\epsilon}:=\left\|x\frac{b}{a}\right\|_{L^{\infty}(I_{2\epsilon})}+\left\|(x-1)\frac{b}{a}\right\|_{L^{\infty}(I_{2\epsilon})}$.\\
Now,  multiply \eqref{Eq1} by $\displaystyle\frac{K}{2\sigma}\overline{u}$, integrate over $(0,1)$, and take the real part, we get
\begin{equation}\label{NN0}
\begin{array}{ll}
\displaystyle
\frac{K}{2}\int_0^1\frac{1}{\sigma}|\la u|^2dx-\frac{K}{2}\int_0^1\eta|u_x|^2dx=-\frac{K}{2}\Re\left(\int_0^1 \frac{1}{\sigma}f^2\overline{u}dx\right)-\frac{K}{2}\Re\left(i\int_0^1 \frac{1}{\sigma}f^1\la\overline{u}dx\right)\\
\displaystyle
-\frac{K}{2}\Re\left(\int_0^1 \frac{1}{\sigma}h(x)f^1\overline{u}dx\right).
\end{array}
\end{equation}
For the estimation of the last terms in the above equation, by using Cauchy-Schwarz,  \eqref{HP} and \eqref{lambdau}, we obtain

\begin{equation}\label{NN1}
\begin{array}{ll}
\displaystyle \left| \frac{K}{2}\Re\left(\int_0^1 \frac{1}{\sigma}f^2\overline{u}dx\right)\right|\leq \frac{K}{2}c_0\left[\|U\|_{\mathcal{H}}\|F\|_{\mathcal{H}}+\|F\|_{\mathcal{H}}^2\right],
\end{array}
\end{equation}

\begin{equation}\label{NN2}
\begin{array}{ll}
\displaystyle\left|\frac{K}{2}\Re\left(i\int_0^1 \frac{1}{\sigma}f^1\la\overline{u}dx\right)\right|\leq \frac{K}{2}c_0c_1\left[\|U\|_{\mathcal{H}}\|F\|_{\mathcal{H}}+\|F\|_{\mathcal{H}}^2\right],
\end{array}
\end{equation}

\begin{equation}\label{NN3}
\begin{array}{ll}
\displaystyle \left|\frac{K}{2}\Re\left(\int_0^1 \frac{1}{\sigma}h(x)f^1\overline{u}dx\right)\right|\leq \frac{K}{2} \max\limits_{x\in [x_1,x_2]} h(x)c_0^2\left[\|U\|_{\mathcal{H}}\|F\|_{\mathcal{H}}+\|F\|_{\mathcal{H}}^2\right].
\end{array}
\end{equation}
So, using \eqref{NN1}-\eqref{NN3} in \eqref{NN0}, we get
\begin{equation}\label{NN00}
\frac{K}{2}\int_0^1\frac{1}{\sigma}|\la u|^2dx-\frac{K}{2}\int_0^1\eta|u_x|^2dx\leq \widetilde{c}_{20} \left[\|U\|_{\mathcal{H}}\|F\|_{\mathcal{H}}+\|F\|_{\mathcal{H}}^2\right],
\end{equation}
where $\widetilde{c}_{20}=\frac{K}{2}(c_0c_1+c_0+\max\limits_{x\in [x_1,x_2]} h(x)c_0^2)$.\\
Finally, adding \eqref{MM00} and \eqref{NN00}, we obtain
\begin{equation}
\begin{array}{ll}
\displaystyle
(1+\frac{K}{2}-(M_{0,2}+M_{1,2}))\int_0^{1} \frac{1}{\sigma}|\la u|^2dx+(1-\frac{K}{2}-(M_{0,1}+M_{1,1}))\int_0^{1}\eta |u_x|^2dx\leq \\
\displaystyle
+c_5 \|U\|_{\mathcal{H}}^{3/2}\|F\|_{\mathcal{H}}^{1/2}
+ (c_{21}+\kappa_1+\kappa_2^{\prime}+\widetilde{c}_{20})\left[\|U\|_{\mathcal{H}}\|F\|_{\mathcal{H}}+\|F\|_{\mathcal{H}}^2\right]+

\frac{c_{22}+c_2}{\la ^2}\left[\|U\|_{\mathcal{H}}\|F\|_{\mathcal{H}}+\|F\|_{\mathcal{H}}^2\right]
\\
\displaystyle
\leq c_{23} \left( \|U\|_{\mathcal{H}}^{3/2}\|F\|_{\mathcal{H}}^{1/2}+(1+\frac{1}{\la ^2})\left[\|U\|_{\mathcal{H}}\|F\|_{\mathcal{H}}+\|F\|_{\mathcal{H}}^2\right]\right),
\end{array}
\end{equation}
where $c_{23}=\max\{c_5, c_{21}+\kappa_1+\kappa_2^{\prime}+\widetilde{c}_{20},c_{22}+c_2\}$.
Finally, by Hypothesis \ref{hyp2}, we get the desired estimates \eqref{last1} and \eqref{last2}.\\
\underline{\textbf{Step 5.}}The goal of this step is to prove \eqref{prop2.9}. \\
Using the first equation in \eqref{eq1}, \eqref{HP} and the Young inequality, we get    
$$
\int_0^1\frac{1}{\sigma}\abs{v}^2dx\leq 2\int_0^1\frac{1}{\sigma}\abs{\la u}^2dx+2c_0^2\int_0^1\eta\abs{f^1_x}^2dx\leq 2\max\left\{1,c_0^2\right\}\left(\int_0^1\frac{1}{\sigma}\abs{\la u}^2dx+\int_0^1\eta \abs{f^1_x}^2dx\right).
$$  
Now, from the above estimate, \eqref{last1} and the fact that $\displaystyle{\int_0^1\eta \abs{f^1_x}^2dx}\leq \|F\|_{\mathcal{H}}^2\leq (1+\la^{-2})\|F\|^2_{\mathcal{H}}$, we obtain 
$$
\int_0^1\frac{1}{\sigma}\abs{v}^2dx\leq 2\max\left\{1,c_0^2\right\}(\kappa_3+1)\left( \|U\|_{\mathcal{H}}^{3/2}\|F\|_{\mathcal{H}}^{1/2}+\left(1+\frac{1}{\la ^2}\right)\left[\|U\|_{\mathcal{H}}\|F\|_{\mathcal{H}}+\|F\|_{\mathcal{H}}^2\right]\right);
$$
thus, thanks to this inequality and  again by \eqref{last1}, we get 
$$
\|U\|_{\mathcal{H}}^2\leq c_{24}\left( \|U\|_{\mathcal{H}}^{3/2}\|F\|_{\mathcal{H}}^{1/2}+\left(1+\frac{1}{\la ^2}\right)\left[\|U\|_{\mathcal{H}}\|F\|_{\mathcal{H}}+\|F\|_{\mathcal{H}}^2\right]\right),
$$
with $c_{24}=2\max\left\{1,c_0^2\right\}(\kappa_3+1)+\kappa_4$. 
Since $\displaystyle c_{24} \|U\|_{\mathcal{H}}^{3/2}\|F\|_{\mathcal{H}}^{1/2}\leq \frac{1}{2}\|U\|_{\mathcal{H}}^2+\frac{c_{24}^2}{2}\|U\|_{\mathcal{H}} \|F\|_{\mathcal{H}}$, by the previous inequality we have
\begin{equation}\label{eqstep5}
\frac{1}{2}\|U\|_{\mathcal{H}}^2\leq c_{25}\left(1+\frac{1}{\la^2}\right)\left[\|U\|_{\mathcal{H}}\|F\|_{\mathcal{H}}+\|F\|_{\mathcal{H}}^2\right],
\end{equation}
where $\displaystyle c_{25}= \left(\frac{c_{24}^2}{2}+c_{24}\right)$.
Now, since
\[\displaystyle c_{25}^2\left(1+\frac{1}{\la^2}\right)^2+c_{25}\left(1+\frac{1}{\la^2}\right) \leq \left(c_{25}+\frac{1}{2}+\frac{c_{25}}{\la^2}\right)^2\]
and
 \[\displaystyle c_{25}\left(1+\frac{1}{\la^2}\right)\|U\|_{\mathcal{H}}\|F\|_{\mathcal{H}}\leq \frac{1}{4}\|U\|_{\mathcal{H}}^2+ c_{25}^2\left(1+\frac{1}{\la^2}\right)^2\|F\|_{\mathcal{H}}^2,\]
 by \eqref{eqstep5} we have

\begin{equation*}
\|U\|_{\mathcal{H}}^2\leq 4\left(c_{25}+\frac{1}{2}+\frac{c_{25}}{\la^2}\right)^2 \|F\|_{\mathcal{H}}^2\leq 4\left(c_{25}+\frac{1}{2}\right)^2\left(1+\frac{1}{\la^2}\right)^2 \|F\|_{\mathcal{H}}^2.
\end{equation*}
Thus, $\displaystyle\|U\|_{\mathcal{H}}\leq \kappa \left(1+\frac{1}{\la^2}\right)  \|F\|_{\mathcal{H}}$, with $\kappa=2c_{25}+1$ and the proof of Proposition \ref{prop2} is completed.
\hfill$\square$

\noindent\textbf{Proof of Theorem \ref{Exponential Stab}.}
First, we will prove \eqref{H1}. Remark that it has been proved in Proposition \ref{prop1} that $0\in\rho(\mathcal{A})$.  Now, suppose \eqref{H1} is not true,  then there exists $\la_{\infty}\in \R^{\ast}$ such that $i\la_{\infty}\notin\rho(\mathcal{A})$. According to Remark A.1 in \cite{AkilCPAA},  page 25 in \cite{LiuZheng01},  and Remark A.3 in \cite{Akil2022},   then there exists $\left\{\left(\la_n,U_n=\left(u_n,v_n\right)\right)\right\}_{n\geq 1}\subset \R^{\ast}\times D\left(\mathcal{A}\right)$
with $\la_n\to\la_{\infty}$ as $n\to \infty$, $|\la_n|\leq|\la_\infty|$ and$ \|U_n\|_{\mathcal{H}}=1$ such that $\left(i\la_nI-\mathcal{A}\right)U_n=F_n\to 0\quad \text{in}\quad \mathcal{H}\quad\text{as}\quad n\to 0.$
By taking $F=F_n$, $U=U_n$ and $\la =\la_n$ in Proposition \ref{prop2} such that $ \|F_n\|_{\mathcal{H}}\to 0$  as $n\to \infty$,  we get $ \|U_n\|_{\mathcal{H}}\to 0$, which contradicts that  $\|U_n\|_{\mathcal{H}}=1$. Thus, condition \eqref{H1} is true. Next, we will prove \eqref{H2} by a contradiction argument. Suppose there exists 
\begin{equation*}
\left\{\left(\la_n,U_n=\left(u_n,v_n\right)\right)\right\}_{n\geq 1}\subset \R^{\ast}\times D\left(\mathcal{A}\right)
\end{equation*}
with $|\la _n|>1$ without affecting the result, such that $|\la _n|\to\infty$, $\|U_n\|_{\mathcal{H}}=1$ and there exists a sequence $F_n=(f^1_n,f_n^2)\in\mathcal{H}$ such that
$$\left(i\la_nI-\mathcal{A}\right)U_n=F_n\to 0\quad \text{in}\quad \mathcal{H}\quad\text{as}\quad n\to 0.$$
From Proposition \ref{prop2}, as  $|\la _n|\to\infty$, we get$\|U_n\|_{\mathcal{H}}=o(1)$, which contradicts $\|U_n\|_{\mathcal{H}}=1$. Thus, condition \eqref{H2} holds true. The result follows from Huang-Pr\"uss Theorem (see  \cite{Huang01} and  \cite{pruss01}) and the proof is completed. \hfill$\square$

\section{Stabilization of  connected degenerate and non-degenerate wave equations  with drift and a single boundary damping}
\noindent This section is devoted to study the well-posedness and the exponential stability of \eqref{System2}. 
\subsection{Well-Posedness}
In this subsection, we will study the well-posendness of  \eqref{System2}.  To this aim, using the definition of $\sigma$ given in \eqref{sigma}, we rewrite the system \eqref{System2} as follows:
\begin{equation}\label{S2}
\left\{\begin{array}{lll}
u^1_{tt}-\sigma(\eta u^1_{x})_x=0,& (x,t) \in (0,1)\times\R^+_{\ast},\\[0.1in]
y^1_{tt}-y^1_{xx}=0, &(x,t) \in (1,L)\times\R^+_{\ast},\\[0.1in]
u^1(0,t)=0, u^1(1,t)=y^1(1,t), &t\in \R^+_{\ast},\\[0.1in]
(\eta u^1_x)(1,t)=y^1_x(1,t), y^1_x(L,t)=-y^1_t(L,t), &t\in \R^+_{\ast},\\[0.1in]
u^1(x,0)=u^1_0(x), u^1_t(x,0)=u^1_1(x),&x\in (0,1),\\[0.1in]
y^1(x,0)=y^1_0(x), y^1_t(x,0)=y^1_1(x),&x\in (1,L).
\end{array}\right.
\end{equation}
By using the same arguments as in Section \ref{sec2}, we introduce the following Hilbert spaces 
$$H_{\frac{1}{\sigma},\ell}^1(0,1):=\left\lbrace u^1\in H_{\frac{1}{\sigma}}^1(0,1); u^1(0)=0\right\rbrace\quad\text{and}\quad H_{\frac{1}{\sigma},\ell}^2(0,1):=\left\lbrace u^1\in  H_{\frac{1}{\sigma},\ell}^1(0,1); Bu^1\in L_{\frac{1}{\sigma}}^2(0,1) \right\rbrace$$
equipped with the following norm
$$\|u^1\|_{\frac{1}{\sigma},\ell}^2=\|u^1\|_{\frac{1}{\sigma},1}^2=\|u\|^2_\frac{1}{\sigma}+\int_0^1 \eta |u_x|^2dx\quad\text{and}\quad  \|u^1\|_{\frac{1}{\sigma},2}^2=\|u\|^2_{\frac{1}{\sigma},\ell}+\int_0^1 \sigma |(\eta u_x)_x|^2dx .$$
Thanks to \eqref{HP}, we obtain that $\|u\|_{\frac{1}{\sigma},\ell}^2$ is equivalent to $\displaystyle\int_0^1 \eta |u_x|^2dx$. 
Let $\ibt{(u^1,u^1_t, y^1, y^1_t)} $ be a regular solution of the system \eqref{S2}. The energy of the system is given by
\begin{equation}\label{Energy2}
E_1(t)=\frac{1}{2}\int_0^{1}\left(\dfrac{1}{\sigma}|u^1_t|^2+\eta|u^1_x|^2\right)dx+\frac{1}{2}\int_{1}^L\left(|y^1_t|^2+|y^1_x|^2\right)dx
\end{equation}
and we obtain that
$$
\dfrac{d}{dt} E_1(t)=-|z^1(L)|^2\leq 0.
$$
Thus, the system \eqref{S2} is dissipative in the sense that its energy is a non increasing function with respect to the time variable $t$. 
We define the energy Hilbert space $\mathcal{H}_1$ by 
$$
\mathcal{H}_1=\left\{\mathcal{U}=(u,^1 v,^1 y,^1 z^1)\in H^1_{\frac{1}{\sigma},\ell}(0,1)\times L^2_{\frac{1}{\sigma}}(0,1)\times H^1(1,L)\times L^2(1,L); u^1(1)=y^1(1) \right\}
$$
equipped with the following inner product
$$
\left<\mathcal{U},\widetilde{\mathcal{U}}\right>_{\mathcal{H}_1}=\int_0^{1}\left(\eta u^1_x{\overline{\widetilde{u^1_x}}}+\frac{1}{\sigma} v^1\overline{\widetilde{v^1}}\right)dx+\int_{1}^L\left(y^1_x{\overline{\widetilde{y^1_x}}}+z^1\overline{\widetilde{z^1}}\right)dx ,
$$
and endowed with the the norm 
\begin{equation}\label{NormEq}
\|\mathcal{U}\|^2_{\mathcal{H}_1}=\int_0^{1} \left(\eta|u^1_x|^2+\frac{1}{\sigma}|v^1|^2\right)dx+\int_{1}^L \left(|y^1_x|^2+|z^1|^2\right) dx
\end{equation}
for all
 $U=(u^1,v^1, y^1, z^1)^{\top}$ and $\widetilde{\mathcal{U}}=(\widetilde{u^1},\widetilde{v^1}, \widetilde{y^1}, \widetilde{z^1})^{\top}\in\mathcal{H}_1$ .
Noting that the standard norm on $\mathcal{H}_1$ is 
\begin{equation}\label{Norm}
\|\mathcal{U}\|^2_{\mathcal{S}}=\|v^1\|^2_{\frac{1}{\sigma}}+\|u^1\|^2_{\frac{1}{\sigma},\ell}+\|z^1\|_{L^2(1,L)}^2+\|y^1\|_{L^2(1,L)}+\|y^1_x\|_{L^2(1,L)}^2.
\end{equation}

\begin{Lemma}\label{Norms}
The two norms  $\|\cdot\|_{\mathcal{H}_1}$ and $\|\cdot\|_{\mathcal{S}}$ are equivalent in $\mathcal{H}_1$, i.e.  there exist two positive constants $C_1, C_2$, independent of $\mathcal{U}$, such that
\begin{equation}\label{equiv}
C_1\|\mathcal{U}\|^2_{\mathcal{S}}\leq \|\mathcal{U}\|^2_{\mathcal{H}_1}\leq C_2 \|\mathcal{U}\|^2_{\mathcal{S}}
\end{equation}
for all $\mathcal{U}=(u^1,v^1,y^1,z^1)\in \mathcal{H}_1$.

\begin{proof}
The inequality on the right hand side is evident with $C_2=1$. \\
Thanks to the boundary and the transmission conditions ($u^1(0)=0$ and $u^1(1)=y^1(1)$), we have $\displaystyle u^1(1)=\int_0^{1}u^1_xdx$ and
$$y^1(x)=u^1(1)+\int_{1}^xy^1_s(s)ds.$$\\
Moreover, by the Young and the Cauchy-Schwarz inequalities we have
\begin{equation*}\label{young1}
|u^1(1)|^2\leq \max_{x\in[0,1]}\frac{1}{\eta} \int_0^{1} \eta |u^1_x|^2dx\quad \text{and}\quad |y^1(x)|^2\leq 2|u^1(1)|^2+2(L-1)\|y_x^1\|_{L^2(1,L)}^2;
\end{equation*}
thus
\begin{equation}\label{young3}
\int^L_{1}|y^1(x)|^2dx\leq  R\left(\int_0^{1} \eta |u^1_x|^2dx+\int_{1}^L|y^1_x|^2dx\right),
\end{equation}
where $R:=2(L-1)\max\left\{\max_{x\in[0,1]}\frac{1}{\eta},L-1\right\}.$
Hence, by  \eqref{young3}, 
\begin{equation*}
\|\mathcal{U}\|^2_{\mathcal{S}}\leq (1+R) \left( \|v^1\|^2_{\frac{1}{\sigma}}+\|u^1\|^2_{\frac{1}{\sigma},\ell}+\|z^1\|_{L^2(1,L)}^2+\|y^1_x\|_{L^2(1,L)}\right)
\end{equation*}
and the left hand side of inequality of \eqref{equiv} is true with
$\displaystyle C_1=\frac{1}{1+R}$.
\end{proof}
\end{Lemma}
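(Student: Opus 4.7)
The plan is to treat the two sides of \eqref{equiv} separately. The upper bound $\|\mathcal{U}\|_{\mathcal{H}_1}^2 \leq \|\mathcal{U}\|_{\mathcal{S}}^2$ is essentially immediate: every summand appearing in $\|\mathcal{U}\|_{\mathcal{H}_1}^2$ already sits inside $\|\mathcal{U}\|_{\mathcal{S}}^2$, since by definition $\|u^1\|_{\frac{1}{\sigma},\ell}^2$ contains $\int_0^1 \eta |u^1_x|^2\,dx$, while the contributions in $v^1$, $z^1$ and $y^1_x$ match term by term. So $C_2=1$ works with no further argument.

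For the reverse inequality, the only quantities in $\|\mathcal{U}\|_{\mathcal{S}}^2$ that are not visible in $\|\mathcal{U}\|_{\mathcal{H}_1}^2$ are $\|u^1\|_{\frac{1}{\sigma}}^2$ and $\|y^1\|_{L^2(1,L)}^2$, so the task reduces to dominating each of these by a constant multiple of $\|\mathcal{U}\|_{\mathcal{H}_1}^2$. The first is handled by the equivalence recorded just above the lemma (a consequence of the Hardy--Poincar\'e inequality \eqref{HP}) between $\|u^1\|_{\frac{1}{\sigma},\ell}^2$ and $\int_0^1 \eta |u^1_x|^2\,dx$ on $H^1_{\frac{1}{\sigma},\ell}(0,1)$; in particular $\|u^1\|_{\frac{1}{\sigma}}^2$ is dominated by a constant times $\int_0^1\eta|u^1_x|^2\,dx \leq \|\mathcal{U}\|_{\mathcal{H}_1}^2$.

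The step I expect to be the main obstacle is controlling $\|y^1\|_{L^2(1,L)}$: since $y^1$ carries no homogeneous boundary condition on $(1,L)$ in isolation, no Poincar\'e inequality is directly available there. The cure is to import information across the interface using the transmission relation $y^1(1)=u^1(1)$ together with the Dirichlet condition $u^1(0)=0$. Writing $u^1(1)=\int_0^1 u^1_s\,ds$ and applying Cauchy--Schwarz with the weight $\eta$ gives $|u^1(1)|^2 \leq \bigl(\max_{[0,1]}\eta^{-1}\bigr) \int_0^1\eta |u^1_s|^2\,ds$; then expressing $y^1(x)=u^1(1)+\int_1^x y^1_s\,ds$ for $x\in(1,L)$, squaring, applying Cauchy--Schwarz once more, and integrating over $(1,L)$ produces an estimate of the form $\int_1^L|y^1|^2\,dx \leq R\bigl(\int_0^1\eta|u^1_x|^2\,dx+\int_1^L |y^1_x|^2\,dx\bigr)$ with $R$ depending only on $L$ and $\eta$, and the right-hand side is bounded by $R\,\|\mathcal{U}\|_{\mathcal{H}_1}^2$.

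Assembling the two new estimates with the terms already common to both norms yields $\|\mathcal{U}\|_{\mathcal{S}}^2 \leq C\,\|\mathcal{U}\|_{\mathcal{H}_1}^2$ for an explicit $C$ depending on $L$, $\eta$ and the Hardy--Poincar\'e constant; taking $C_1=1/C$ concludes the proof. Conceptually only the $y^1$ step is nontrivial, and what makes it work is precisely the transmission constraint $u^1(1)=y^1(1)$ built into the definition of $\mathcal{H}_1$.
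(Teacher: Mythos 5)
Your proposal is correct and follows essentially the same route as the paper: $C_2=1$ is immediate, and the only real work is bounding $\|y^1\|_{L^2(1,L)}$ via the chain $u^1(0)=0$, $u^1(1)=\int_0^1 u^1_x\,dx$, $y^1(x)=y^1(1)+\int_1^x y^1_s\,ds$ together with the transmission condition, exactly as in the paper's argument leading to the constant $R$. You are in fact slightly more explicit than the paper in invoking the Hardy--Poincar\'e equivalence to absorb the $\|u^1\|_{\frac{1}{\sigma}}^2$ part of $\|u^1\|_{\frac{1}{\sigma},\ell}^2$, a step the paper leaves implicit in its final display.
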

\noindent Finally, we define the unbounded linear operator $\mathcal{A}_1$ by 
$$
\mathcal A \mathcal U=\mathcal{A}_1(u^1,v^1, y^1, z^1)^{\top}=\left(v^1,\sigma(\eta  u^1_x)_x,z^1,y^1_{xx}\right)^{\top}.
$$
for all $\mathcal{U}=(u^1,v,^1 y^1, z^1)^{\top}\in D(\mathcal{A}_1)$, where
\begin{equation*}
D(\mathcal{A}_1)=\left\{\begin{array}{c}
\displaystyle{\mathcal{U}=(u^1,v^1,y^1,z^1)\in \mathcal{H}_1;\ (v^1,z^1)\in H_{\frac{1}{\sigma},\ell}^1(0,1)\times H^1(1,L),\ u^1\in H_{\frac{1}{\sigma},\ell}^2(0,1)},\\[0.1in]  
y^1\in  H^2(L,1),\ (\eta u^1_x)_x(1)=y^1_x(1), \ y^1_x(L)=-z^1(L)
\end{array}
\right\},
\end{equation*}
Hence, we can rewrite \eqref{S2} as the following Cauchy problem
\begin{equation}\label{Cauchy1}
\mathcal{U}_t=\mathcal{A}_1\mathcal{U},\quad\mathcal{ U(}0)=\mathcal{U}_0\quad \text{where}\quad\mathcal{U}_0=\left(u^1_0,u^1_1, y_0^1, y_1^1\right)^{\top}.
\end{equation}
We have that $\Re(\mathcal{A}_1\mathcal{U}, \mathcal{U})_{\mathcal{H}_1}=-|z^1(L)|^2$,
which implies that $\mathcal{A}_1$ is dissipative.  Now,  let $\mathcal{G}=(g_1,g_2, g_3, g_4)\in \mathcal{H}_1 $; by Lax-Milgram Theorem, one can prove the existence of $\mathcal{U}\in D(\mathcal{A}_1)$ such that
$$-\mathcal{A}_1\mathcal{U}=\mathcal{G.}$$
Therefore, the unbounded linear operator  $\mathcal{A}_1$ is m-dissipative in $\mathcal{H}_1, $ thus $0\in\rho(\mathcal{A}_1)$ and, by Lumer-Phillips Theorem (see, e.g., \cite{LiuZheng01} and \cite{Pazy01}), $\mathcal{A}_1$ generates a $C_0$- semigroup of contractions $(\mathcal T_1(t))_{t\ge0}=(e^{t\mathcal{A}_1})_{t\geq 0}$ in $\mathcal{H}_1$.  Hence, the solution of the Cauchy problem  \eqref{Cauchy1} admits the following representation 
$$\mathcal{U}(t)=e^{t\mathcal{A}_1}\mathcal{U}_0\quad t\geq 0,$$
which leads to the well-posedness of \eqref{Cauchy1}. The following result is immediate.

\begin{theoreme}
	For any $U_0\in\mathcal{H}_1$, problem \eqref{Cauchy1} admits a unique weak solution satisfying 
	$$
	\mathcal{U}(t)\in C^0(\R^+;\mathcal{H}_1). 
	$$
	Moreover, if  $\mathcal{U}_0\in D(\mathcal{A}_1) $, \eqref{Cauchy1} admits a unique strong solution $\mathcal{U}$ satisfying
	$$
	\mathcal{U}(t)\in C^1(\R^+,\mathcal{H}_1)\cap C^0(\R^+,D(\mathcal{A}_1)).
	$$
\end{theoreme}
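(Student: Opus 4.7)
The plan is to apply the Lumer--Phillips theorem to $\mathcal{A}_1$ on $\mathcal{H}_1$: once $\mathcal{A}_1$ is shown to be m-dissipative, the generated $C_0$-semigroup of contractions immediately yields both assertions of the theorem through the classical linear evolution theory (see \cite{Pazy01}). Dissipativity and maximality must therefore be proved, and the Feller-type behaviour at $x=0$ together with the transmission at $x=1$ and the boundary feedback at $x=L$ must be handled carefully.

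For dissipativity I would take $\mathcal{U}=(u^1,v^1,y^1,z^1)^\top\in D(\mathcal{A}_1)$ and integrate by parts in the two terms $\int_0^1(\eta u^1_x)_x\,\overline{v^1}\,dx$ and $\int_1^L y^1_{xx}\,\overline{z^1}\,dx$ appearing in $\langle\mathcal{A}_1\mathcal{U},\mathcal{U}\rangle_{\mathcal{H}_1}$. The boundary term at $x=0$ vanishes by $v^1(0)=0$ combined with Lemma \ref{Lemma0}(1); the boundary contributions at $x=1$ cancel thanks to the transmission condition $(\eta u^1_x)(1)=y^1_x(1)$ and the compatibility $v^1(1)=z^1(1)$ built into $\mathcal{H}_1$; the feedback $y^1_x(L)=-z^1(L)$ produces a residual $-|z^1(L)|^2$; and the two conjugate pairs of remaining interior integrals are purely imaginary. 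Taking the real part gives $\Re\langle\mathcal{A}_1\mathcal{U},\mathcal{U}\rangle_{\mathcal{H}_1}=-|z^1(L)|^2\leq 0$.

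For maximality I would, given $\mathcal{G}=(g_1,g_2,g_3,g_4)\in\mathcal{H}_1$, set $v^1=-g_1$ and $z^1=-g_3$ from the first and third components of $-\mathcal{A}_1\mathcal{U}=\mathcal{G}$, and seek $(u^1,y^1)$ in the closed subspace
\begin{equation*}
V=\bigl\{(u,y)\in H^1_{\frac{1}{\sigma},\ell}(0,1)\times H^1(1,L):\ u(1)=y(1)\bigr\}
\end{equation*}
solving the elliptic problem $-\sigma(\eta u^1_x)_x=g_2$, $-y^1_{xx}=g_4$ subject to $(\eta u^1_x)(1)=y^1_x(1)$ and $y^1_x(L)=g_3(L)$. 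Testing formally against $(\varphi,\psi)\in V$ leads to the variational identity
\begin{equation*}
\int_0^1\eta u^1_x\overline{\varphi_x}\,dx+\int_1^L y^1_x\overline{\psi_x}\,dx=\int_0^1\frac{g_2}{\sigma}\,\overline{\varphi}\,dx+\int_1^L g_4\,\overline{\psi}\,dx+g_3(L)\overline{\psi(L)},
\end{equation*}
whose left-hand side is coercive on $V$ by the Hardy--Poincar\'e inequality of Proposition \ref{prop1} together with the norm equivalence of Lemma \ref{Norms}, and whose right-hand side is a bounded antilinear functional on $V$ by Cauchy--Schwarz and the continuity of the trace at $x=L$. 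Lax--Milgram supplies a unique solution $(u^1,y^1)\in V$; localising $(\varphi,\psi)$ inside $(0,1)$ and $(1,L)$ recovers the two PDEs in the distributional sense, in particular $Bu^1\in L^2_{\frac{1}{\sigma}}(0,1)$ and $y^1_{xx}\in L^2(1,L)$, after which the natural conditions at $x=1$ and $x=L$ follow by integrating by parts against arbitrary $(\varphi,\psi)\in V$. The main technical obstacle is ruling out a spurious boundary contribution at the degenerate endpoint $x=0$ during each integration by parts, which is exactly the content of Lemma \ref{Lemma0}(1); the secondary technical point is coercivity of the sesquilinear form on the degenerately weighted space $V$, which rests entirely on Proposition \ref{prop1} and Lemma \ref{Norms}.
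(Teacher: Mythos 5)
Your proposal is correct and follows essentially the same route as the paper: dissipativity via integration by parts (with the boundary term at $x=0$ killed by Lemma \ref{Lemma0}, cancellation at $x=1$ from the transmission conditions, and the residual $-|z^1(L)|^2$ from the feedback), maximality via Lax--Milgram on the coupled variational problem, and then Lumer--Phillips. In fact you supply more detail than the paper, which merely asserts ``by Lax--Milgram Theorem, one can prove the existence''; your explicit choice of the space $V$, the variational identity, and the coercivity argument via Proposition \ref{prop1} and Lemma \ref{Norms} are exactly what is needed to fill in that assertion.
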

\vspace{0.4cm}

\subsection{Exponential Stability}
This subsection focuses on examining the exponential stability of system \eqref{S2}. We begin introducing a main hypothesis that will serve as the basis for studying the system's exponential stability.  Denote by 
\begin{equation}
N_1:=\left\|x\frac{a^{\prime}-b}{a}\right\|_{L^{\infty}(0,1)}\quad\text{and}\quad
N_2:=\left\|x\frac{b}{a}\right\|_{L^{\infty}(0,1)}.
\end{equation}

\begin{Hypothesis}\label{HypothesisExp2}
Assume Hypothesis \ref{hyp1}, $a$  \eqref{WD} or \eqref{SD} and the functions $a$ and $b$ such that 
\center{  $N_1<1+\frac{K}{2}$ and $ N_2<1-\frac{K}{2}$.}
\end{Hypothesis}

\begin{ex}\normalfont
$(1)$ Example for the (WD) case: $a(x)=\sqrt{x}$ and $b(x)=\frac{1}{2}$.  In this case $K=\frac{1}{2}$ and it is easy to see that $N_1=N_2=\frac{1}{2}$. So, Hypothesis \ref{HypothesisExp2} is satisfied. \\
$(2)$ Example for the (SD) case: $a(x)=x\sqrt{x}$ and $b(x)=\frac{x}{8}$. In this case $K=\frac{3}{2}$ and it is easy to see that $N_1=\frac{3}{2}$ and $N_2=\frac{1}{8}$. So, Hypothesis \ref{HypothesisExp2} is satisfied. 

\end{ex}
\begin{Theorem}\label{Exponential Stab2}
Assume Hypothesis \ref{HypothesisExp2}. Then, the  $C_0-$semigroup of contractions $\left(\mathcal T_1(t)\right)_{t\geq 0}$ is exponentially  stable, i.e. there exist constants $M\geq 1$ and $\tau>0$, independent of $\mathcal{U}_0$, such that 
$$
\left\|e^{t\mathcal{A}_1}\mathcal{U}_0\right\|_{\mathcal{H}_1}\leq Me^{-\tau t}\|\mathcal{U}_0\|_{\mathcal{H}_1},\qquad t\geq 0.
$$
\end{Theorem}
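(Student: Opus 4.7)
The plan is to reduce Theorem \ref{Exponential Stab2} to the Huang--Pr\"uss resolvent conditions (H1) and (H2), exactly as in the proof of Theorem \ref{Exponential Stab}: since $0\in\rho(\mathcal{A}_1)$ is already known from well-posedness, both conditions will follow once I establish a resolvent estimate of the form
\begin{equation*}
\|\mathcal{U}\|_{\mathcal{H}_1}\leq C\Bigl(1+\tfrac{1}{\lambda^2}\Bigr)\|\mathcal{F}\|_{\mathcal{H}_1}
\end{equation*}
for every $\lambda\in\mathbb{R}^\ast$ and every solution $\mathcal{U}=(u^1,v^1,y^1,z^1)\in D(\mathcal{A}_1)$ of $(i\lambda I-\mathcal{A}_1)\mathcal{U}=\mathcal{F}:=(f_1,f_2,f_3,f_4)$. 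Substituting $v^1=i\lambda u^1-f_1$ and $z^1=i\lambda y^1-f_3$, this becomes the coupled system $\lambda^2 u^1+\sigma(\eta u^1_x)_x=-(f_2+i\lambda f_1)$ on $(0,1)$ and $\lambda^2 y^1+y^1_{xx}=-(f_4+i\lambda f_3)$ on $(1,L)$, together with the transmission conditions $u^1(1)=y^1(1)$ and $(\eta u^1_x)(1)=y^1_x(1)$ and the damping relation $y^1_x(L)=-z^1(L)$.

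First, the dissipation identity $\Re\langle\mathcal{A}_1\mathcal{U},\mathcal{U}\rangle_{\mathcal{H}_1}=-|z^1(L)|^2$ and Cauchy--Schwarz give $|z^1(L)|^2\leq\|\mathcal{U}\|_{\mathcal{H}_1}\|\mathcal{F}\|_{\mathcal{H}_1}$, and via the damping relation this also controls $|y^1_x(L)|$. Next, for the non-degenerate part I would apply the classical multiplier $(x-1)\overline{y^1_x}$ to the equation for $y^1$ on $(1,L)$, obtaining after integration by parts an identity of the form
\begin{equation*}
\int_1^L\!\bigl(|z^1|^2+|y^1_x|^2\bigr)dx=\mathcal{B}_L+\mathcal{B}_1+\mathcal{R}_{\mathcal F},
\end{equation*}
where $\mathcal{B}_L$ contains $|z^1(L)|^2$ and $|y^1_x(L)|^2$ (already controlled by the damping), $\mathcal{R}_{\mathcal F}$ can be absorbed into $\|\mathcal{U}\|_{\mathcal{H}_1}\|\mathcal{F}\|_{\mathcal{H}_1}+\|\mathcal{F}\|^2_{\mathcal{H}_1}$, and $\mathcal{B}_1$ is a boundary term at $x=1$ which, after using the transmission conditions, is expressed through $|\lambda u^1(1)|^2$, $|(\eta u^1_x)(1)|^2$ and a cross term.

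For the degenerate part on $(0,1)$, the absence of any internal damping forces me to recover control only through the interface at $x=1$. I would imitate Proposition \ref{prop2} but with the simpler choice $\varphi(x)=x$ (no cutoff near $x=1$), multiplying the resolvent equation by $-\tfrac{2x}{\sigma}\overline{u^1_x}$ and integrating by parts. Lemma \ref{Lemma0} and Lemma \ref{Lemma01} dispose of the boundary limits at $x=0$, while the $x=1$ trace produces precisely $\tfrac{1}{\sigma(1)}|\lambda u^1(1)|^2+\eta(1)|u^1_x(1)|^2$. Combining this with the second multiplier $\tfrac{K}{2\sigma}\overline{u^1}$ (as in Step 4 of Proposition \ref{prop2}) and invoking Hypothesis \ref{HypothesisExp2}, namely $N_1<1+K/2$ and $N_2<1-K/2$, I obtain
\begin{equation*}
\int_0^1\Bigl(\tfrac{1}{\sigma}|\lambda u^1|^2+\eta|u^1_x|^2\Bigr)dx\leq C_\ast\,\mathcal{B}_1+ C\Bigl(1+\tfrac{1}{\lambda^2}\Bigr)\bigl[\|\mathcal{U}\|_{\mathcal{H}_1}\|\mathcal{F}\|_{\mathcal{H}_1}+\|\mathcal{F}\|_{\mathcal{H}_1}^2\bigr].
\end{equation*}
Adding this to the non-degenerate estimate and matching the boundary terms via transmission, I absorb $\mathcal{B}_1$ on the right by a suitable choice of multiplier weights, yielding the required resolvent bound. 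The standard $\varepsilon$-Young absorption argument of Step 5 of Proposition \ref{prop2} then reabsorbs the $\|\mathcal{U}\|_{\mathcal{H}_1}$ factor on the right-hand side, and the two contradiction arguments at the end of the proof of Theorem \ref{Exponential Stab} transfer verbatim to establish (H1) and (H2).

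The main obstacle will be the precise handling of the coupling at $x=1$: the degenerate multiplier identity naturally generates boundary traces weighted by $\sigma(1)$ and $\eta(1)$, whereas the non-degenerate multiplier generates unweighted traces. The transmission conditions $u^1(1)=y^1(1)$ and $(\eta u^1_x)(1)=y^1_x(1)$ must be used together with a careful tuning of the multiplier coefficients so that the interface contributions from the two sides cancel (up to terms controlled by the damping). Secondary but nontrivial technicalities are the treatment of the $x\to 0^+$ limits for the degenerate equation (handled through Lemma \ref{Lemma0} and Lemma \ref{Lemma01} as in Proposition \ref{prop2}) and the absorption of the drift-induced remainders $\int\tfrac{\varphi}{\sigma}(\tfrac{a'-b}{a})|\lambda u^1|^2$ and $\int\varphi\tfrac{b}{a}\eta|u^1_x|^2$, for which Hypothesis \ref{HypothesisExp2} is tailor-made.
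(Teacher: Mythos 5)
Your overall architecture is exactly the paper's: reduce to the Huang--Pr\"uss conditions, derive a resolvent estimate $\|\mathcal{U}\|_{\mathcal{H}_1}\leq C(1+\lambda^{-2})\|\mathcal{F}\|_{\mathcal{H}_1}$, control the non-degenerate part through the damping at $x=L$ with a multiplier of the form $q(x)\overline{y^1_x}$, and control the degenerate part with the pair of multipliers $-\tfrac{2x}{\sigma}\overline{u^1_x}$ and $\tfrac{K}{2\sigma}\overline{u^1}$ under Hypothesis \ref{HypothesisExp2}; the contradiction arguments for (R1) and (R2) then transfer verbatim. This is precisely Proposition \ref{prop2'} and its Steps 1--4.

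There is, however, one concrete gap in the step where the interface traces are supposed to be controlled. You claim that the multiplier $(x-1)\overline{y^1_x}$ on $(1,L)$ produces a boundary term $\mathcal{B}_1$ at $x=1$ involving $|\lambda u^1(1)|^2$ and $|(\eta u^1_x)(1)|^2$. It does not: since $(x-1)$ vanishes at $x=1$, integration by parts leaves only boundary contributions at $x=L$, so $\mathcal{B}_1=0$ and your identity only yields the bulk bound $\int_1^L(|\lambda y^1|^2+|y^1_x|^2)dx\lesssim \|\mathcal{U}\|_{\mathcal{H}_1}\|\mathcal{F}\|_{\mathcal{H}_1}+\|\mathcal{F}\|_{\mathcal{H}_1}^2$ (the paper's Step 1). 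As written, your plan therefore has no mechanism to bound the traces $|\lambda u^1(1)|^2$ and $|(\eta u^1_x)(1)|^2$, and these are exactly the terms through which \emph{all} of the control of the undamped degenerate component must flow, since they appear with the unfavorable sign in the identity coming from $-\tfrac{2x}{\sigma}\overline{u^1_x}$. The missing ingredient is a second multiplier on $(1,L)$ that vanishes at $x=L$ instead of at $x=1$ --- the paper uses $g_{(0,1)}(x)=x-L$ in its Step 2 --- which produces $(L-1)\bigl(|\lambda y^1(1)|^2+|y^1_x(1)|^2\bigr)$ with the favorable sign, bounded by the already-controlled bulk energy plus data; the transmission conditions then convert these into the needed bounds on $|\lambda u^1(1)|^2$ and $|(\eta u^1_x)(1)|^2$. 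Note also that your suggestion to make the interface contributions from the two sides \emph{cancel} by tuning a scalar weight cannot work in general: the transmission conditions give $|y^1_x(1)|^2=\eta(1)^2|u^1_x(1)|^2$ while $|\lambda y^1(1)|^2=|\lambda u^1(1)|^2$, and no single scalar matches both the $\eta(1)|u^1_x(1)|^2$ and the $\tfrac{1}{\sigma(1)}|\lambda u^1(1)|^2$ coefficients simultaneously unless $a(1)=\eta(1)^2$; one must bound the interface terms first and then substitute, as the paper does. With that repair the rest of your argument goes through as described.
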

\noindent According to Huang \cite{Huang01} and Pruss \cite{pruss01}, we have to check if the following conditions hold:
\begin{equation}\label{R1}\tag{${\rm R1}$}
 i\mathbb{R}\subseteq \rho\left(\mathcal{A}_1\right)
\end{equation}
and 
\begin{equation}\label{R2}\tag{${\rm R2}$}
\displaystyle{\sup_{\la\in \R}\|\left(i\la I-\mathcal{A}_1\right)^{-1}\|_{\mathcal{L}\left(\mathcal{H}_1\right)}=O(1).}
\end{equation}
The following proposition is a technical finding that will be used to prove Theorem \ref{Exponential Stab2}.
\begin{Proposition}\label{prop2'}
Assume Hypothesis \ref{HypothesisExp2} and let $(\lambda,\mathcal{U}:=(u,v, y, z))\in \R^{\ast}\times D(\mathcal{A}_1)$, with $\ma{\lambda\neq 0}$, such that
\begin{equation}\label{Q0}
\left(i\la I-\mathcal{A}_1\right)\mathcal{U}=G:=(g^1,g^2, g^3, g^4)\in \mathcal{H}_1 ,
\end{equation}
i.e.
\begin{eqnarray}
i\la u^1-v^1 &=&g^1,\label{Q1}\\
i\la v^1-\sigma(\eta u^1_x)_x&=&g^2,\label{Q2}\\
i\la y^1-z^1 &=&g^3,\label{Q3}\\
i\la z^1-y^1_{xx}&=&g^4,\label{Q4}.
\end{eqnarray}
Then the following inequality holds:
\begin{equation}\label{mainresult}
\|\mathcal{U}\|_{\mathcal{H}_1}\leq  m \left(1+\frac{1}{\la^2}\right) \|G\|_{\mathcal{H}_1},
\end{equation}
where $m$ is a suitable positive constant independent of $\lambda$.
\end{Proposition}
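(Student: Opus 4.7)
\textbf{Plan of proof for Proposition \ref{prop2'}.}
The strategy mirrors the one used for Proposition \ref{prop2}, with the new feature that the dissipation is available only at the endpoint $x=L$ of the non-degenerate part, so all the energy must be transported from $L$ through the interface $x=1$ and then through the degenerate interval $(0,1)$. First, taking the inner product of $G$ with $\mathcal{U}$ in $\mathcal{H}_1$ and using the computation $\Re\langle \mathcal{A}_1\mathcal{U},\mathcal{U}\rangle_{\mathcal{H}_1}=-|z^1(L)|^2$ one immediately obtains
\begin{equation*}
|z^1(L)|^2 \le \|\mathcal{U}\|_{\mathcal{H}_1}\|G\|_{\mathcal{H}_1}.
\end{equation*}
Substituting $v^1=i\lambda u^1-g^1$ and $z^1=i\lambda y^1-g^3$ in \eqref{Q2} and \eqref{Q4} one then gets the second-order equations $\lambda^2 u^1+\sigma(\eta u^1_x)_x=-(g^2+i\lambda g^1)$ on $(0,1)$ and $\lambda^2 y^1+y^1_{xx}=-(g^4+i\lambda g^3)$ on $(1,L)$, which will be treated by multiplier methods.

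Second, on the non-degenerate interval I would use the classical multiplier $(x-1)\bar y^1_x$ (a piecewise version of the $\varphi$ multiplier from Step 2--3 in the previous proof). Multiplying $\lambda^2 y^1+y^1_{xx}=-(g^4+i\lambda g^3)$ by $2(x-1)\bar y^1_x/\sigma$-analogue (here simply $2(x-1)\bar y^1_x$ since no weight is present on $(1,L)$) and integrating by parts over $(1,L)$ yields
\begin{equation*}
\int_1^L\!\bigl(|\lambda y^1|^2+|y^1_x|^2\bigr)dx = (L-1)\bigl(|z^1(L)|^2+|y^1_x(L)|^2\bigr)-|\lambda y^1(1)|^2-|y^1_x(1)|^2+\mathcal{R}_L,
\end{equation*}
where $\mathcal{R}_L$ collects terms controlled by $\|\mathcal{U}\|_{\mathcal{H}_1}\|G\|_{\mathcal{H}_1}+\|G\|_{\mathcal{H}_1}^2$. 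Since $y^1_x(L)=-z^1(L)$, the boundary term at $L$ is already absorbed by the first step, so this controls the non-degenerate part up to the interface values $|\lambda y^1(1)|^2$ and $|y^1_x(1)|^2$. A second multiplier (multiplying by $\bar y^1$) together with an auxiliary cut-off argument near $x=1$ provides an estimate for those interface values in terms of $\|\mathcal{U}\|_{\mathcal{H}_1}\|G\|_{\mathcal{H}_1}+\|G\|_{\mathcal{H}_1}^2$ and the energies on the degenerate side (via the transmission $u^1(1)=y^1(1)$, $(\eta u^1_x)(1)=y^1_x(1)$).

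Third, on the degenerate interval $(0,1)$ I would mimic Steps 2--4 of the proof of Proposition \ref{prop2} but with the single cut-off $\varphi(x)=x$, because the damping is no longer internal but rather \emph{passes through} the interface $x=1$. Multiplying \eqref{Q2} (in the form $\lambda^2 u^1+\sigma(\eta u^1_x)_x=-(g^2+i\lambda g^1)$) by $-2x\bar u^1_x/\sigma$ and integrating over $(0,1)$ yields, after the same algebraic manipulations as in equation \eqref{e2} (using $\eta'=b\eta/a$ and $(x/\sigma)'=1/\sigma-(x/\sigma)(a'-b)/a$),
\begin{equation*}
\int_0^1\!\frac{1}{\sigma}|\lambda u^1|^2dx+\int_0^1\!\eta|u^1_x|^2dx=\mathcal{B}(1)+\int_0^1\!\frac{x}{\sigma}\!\Bigl(\frac{a'-b}{a}\Bigr)|\lambda u^1|^2dx+\int_0^1\!\frac{xb}{a}\eta|u^1_x|^2dx+\mathcal{R}_0,
\end{equation*}
where $\mathcal{B}(1)$ gathers the boundary contributions at $x=1$ (already handled in the preceding paragraph) and $\mathcal{R}_0$ is bounded by $\|\mathcal{U}\|_{\mathcal{H}_1}\|G\|_{\mathcal{H}_1}+\|G\|_{\mathcal{H}_1}^2$; the vanishing of the boundary terms at $x=0$ is guaranteed by Lemma \ref{Lemma0}. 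Using the $L^\infty$ bounds $N_1$ and $N_2$ introduced just before Hypothesis \ref{HypothesisExp2} one absorbs the two residual integrals into the left-hand side. Combining this with the equivalent $K/2$-multiplier identity obtained by testing \eqref{Q2} against $K\bar u^1/(2\sigma)$ (exactly as in \eqref{NN0}) produces the coercivity
\begin{equation*}
\bigl(1+\tfrac{K}{2}-N_1\bigr)\!\!\int_0^1\!\!\frac{|\lambda u^1|^2}{\sigma}dx+\bigl(1-\tfrac{K}{2}-N_2\bigr)\!\!\int_0^1\!\!\eta|u^1_x|^2dx\le c\bigl(1+\tfrac{1}{\lambda^2}\bigr)\bigl[\|\mathcal{U}\|_{\mathcal{H}_1}\|G\|_{\mathcal{H}_1}+\|G\|_{\mathcal{H}_1}^2\bigr],
\end{equation*}
and Hypothesis \ref{HypothesisExp2} makes both coefficients strictly positive. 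Adding the estimates on $(0,1)$ and $(1,L)$, recovering $\|v^1\|_{1/\sigma}$ from $v^1=i\lambda u^1-g^1$ and $\|z^1\|_{L^2(1,L)}$ from $z^1=i\lambda y^1-g^3$, and applying Young's inequality to absorb the $\|\mathcal{U}\|_{\mathcal{H}_1}^{3/2}\|G\|_{\mathcal{H}_1}^{1/2}$ term as in Step~5 of the previous proof, yields \eqref{mainresult}.

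The main obstacle is the control of the interface contributions at $x=1$: unlike in the internally damped case, the boundary multipliers produce cross-terms involving $|\lambda y^1(1)|^2$, $|y^1_x(1)|^2$, $|\lambda u^1(1)|^2$ and $(\eta u^1_x)(1)\overline{u^1_x(1)}$ that must be matched on the two sides via the transmission conditions and reabsorbed without destroying the $N_1<1+K/2$, $N_2<1-K/2$ coercivity margin. All other steps are essentially the bookkeeping already carried out in the proof of Proposition \ref{prop2}.
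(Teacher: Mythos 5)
Your overall architecture is the same as the paper's: dissipation at $x=L$, affine multipliers on $(1,L)$, the multipliers $-2x\bar u^1_x/\sigma$ and $K\bar u^1/(2\sigma)$ on $(0,1)$ combined to produce the coefficients $1+\frac{K}{2}-N_1$ and $1-\frac{K}{2}-N_2$, and recovery of $v^1,z^1$ at the end. But the step you yourself flag as ``the main obstacle'' --- controlling $|\lambda y^1(1)|^2$ and $|y^1_x(1)|^2$ at the interface --- is exactly where your sketch has a genuine gap. You propose ``a second multiplier (multiplying by $\bar y^1$) together with an auxiliary cut-off argument near $x=1$'' and say the interface values would be estimated in terms of $\|\mathcal{U}\|_{\mathcal{H}_1}\|G\|_{\mathcal{H}_1}+\|G\|_{\mathcal{H}_1}^2$ \emph{and the energies on the degenerate side}. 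That last clause is the problem: the degenerate-side estimate in your third step needs the interface terms $\mathcal{B}(1)$ as input, so feeding degenerate-side energy back into the interface bound is circular unless you can show the relevant constant is small enough to absorb --- and nothing in your sketch guarantees that. The paper closes this loop differently and cleanly: it uses the affine multiplier $g_{(0,1)}(x)=x-L$, which vanishes at $x=L$ and equals $-(L-1)$ at $x=1$, so the resulting identity expresses $(L-1)\bigl(|\lambda y^1(1)|^2+|y^1_x(1)|^2\bigr)$ entirely in terms of $\int_1^L(|\lambda y^1|^2+|y^1_x|^2)dx$ (already bounded by the first multiplier $x-1$) plus source terms. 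The interface values are thus controlled purely from the non-degenerate side, and only then transported to $|\lambda u^1(1)|^2$ and $|(\eta u^1_x)(1)|^2$ via the transmission conditions, after which the degenerate-side identity closes with no absorption needed.

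Two smaller inaccuracies: the identity you display for the multiplier $(x-1)\bar y^1_x$ cannot contain the terms $-|\lambda y^1(1)|^2-|y^1_x(1)|^2$, because $x-1$ vanishes at $x=1$ (those terms appear only for a multiplier not vanishing there, e.g.\ $x-L$); since they enter with a favourable sign this does not break the inequality, but it suggests the two multipliers are being conflated. Also, there is no $\|\mathcal{U}\|_{\mathcal{H}_1}^{3/2}\|\mathcal{G}\|_{\mathcal{H}_1}^{1/2}$ term to absorb here: that term in Proposition \ref{prop2} came from the internal damping coefficient $h$, which is absent in system \eqref{S2}; the final step only needs the elementary absorption of $\|\mathcal{U}\|_{\mathcal{H}_1}\|G\|_{\mathcal{H}_1}$.
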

Before proving the above proposition, observe that,
by the dissipation of the energy, we have
\begin{equation}
|z^1(L)|^2= |\Re\left<\mathcal{A}\mathcal{U},\mathcal{U}\right>_{\mathcal{H}}|=|\Re\left<i\la I-\mathcal{A}\mathcal{U},\mathcal{U}\right>_{\mathcal{H}_1}|=|\left<G,\mathcal{U}\right>_{\mathcal{H}_1}|\leq \ma{\|\mathcal{U}\|_{\mathcal{H}_1}\|G\|_{\mathcal{H}_1}},
\end{equation}
and, using the equality $y^1_x(L)=-z^1(L)$, 
\begin{equation}\label{y1xL}
\ma{\abs{y^1_x(L)}^2\leq  \|\mathcal{U}\|_{\mathcal{H}_1}\|G\|_{\mathcal{H}_1}\leq \|\mathcal{U}\|_{\mathcal{H}_1}\|G\|_{\mathcal{H}_1}+\|G\|_{\mathcal{H}_1}^2.}
\end{equation}

\noindent Moreover, thanks to the transmission condition  $g^3(1)=g^1(1)$, one has
\begin{equation}\label{g1L1}
|g^3(1)|=|g^1(1)|=\left|\int_0^{1} g_x^1dx\right|\leq \sqrt{\max\limits_{x\in[0,1]}\frac{1}{\eta}}\|\ma{\sqrt{\eta}}g^1_x\|\leq m_1 \|G\|_{\mathcal{H}_1}
\end{equation}
where $m_1:=\sqrt{ \max\limits_{x\in[0,1]}\frac{1}{\eta}}. $
Hence,
 \begin{equation}\label{g3L1}
|g^3(1)|\leq m_1 \|G\|_{\mathcal{H}_1}
\end{equation}
and using the fact that $\displaystyle g^3(L)=\int_{1}^L g_x^3 dx+g^3(1)$, we can deduce
\begin{equation}\label{g3L}
 |g^3(L)|^2\leq 2(L-1)\|g^3_x\|^2+2m_1^2 \|G\|_{\mathcal{H}_1}^2\leq 2(L-1+m_1^2)\|G\|_{\mathcal{H}_1}^2\leq m_2\left[\|\mathcal{U}\|_{\mathcal{H}_1}\|G\|_{\mathcal{H}_1}+\|G\|_{\mathcal{H}_1}^2\right]
\end{equation}
where $m_2=2(L-1+m_1^2)$.
Now, since $ i\la y^1 -z^1 =g^3$, we get
\begin{equation}\label{lambdayL}
|\la y^1 (L)|^2\leq 2|z^1 (L)|^2+2|g^3(L)|^2\leq 2\|\mathcal{U}\|_{\mathcal{H}_1}\|G\|_{\mathcal{H}_1}+2m_2\|G\|_{\mathcal{H}_1}^2\leq m_3\left[\|\mathcal{U}\|_{\mathcal{H}_1}\|G\|_{\mathcal{H}_1}+\|G\|_{\mathcal{H}_1}^2\right] 
\end{equation}
where $m_3=2\max\{1, m_2\}$ and, using \eqref{Q1}, the Young inequality and \eqref{HP}, we deduce
\begin{equation}\label{lambdau1}
\begin{array}{ll}
\displaystyle
\|\la u^1\|_{\frac{1}{\sigma}}\leq \|v\|_{\frac{1}{\sigma}}+\sqrt{C_{HP}}\|g^1_x\|
\leq m_0\left(\|v^1\|_{\frac{1}{\sigma}}+\|\sqrt{\eta}g_x^1\|\right)\leq m_0 \left[\|\mathcal{U}\|_{\mathcal{H}_1}+\|G\|_{\mathcal{H}_1}\right]
\end{array}
\end{equation}
where $\displaystyle m_0= \max\left(1, \widetilde{m}_0\right)$ with $\displaystyle\widetilde{m}_0=\sqrt{C_{HP}\max_{x\in[0,1]}\frac{1}{\eta}}$.\\
Using the equivalence between the norms given in Lemma \ref{Norms} and the fact that $G\in \mathcal{H}_1$, we obtain 
\begin{equation}\label{g3}
\|g^3\|_{L^2(1,L)}\leq\|g^3\|_{H^1(1,L)} \leq \|G\|_{\mathcal{S}}\leq \ma{\sqrt{1+R}} \|G\|_{\mathcal{H}_1}
\end{equation}
\ma{where $R$ is defined in \eqref{young3}}; thus by \eqref{g3} and the equality $i\la y^1=z^1+g^3$, we obtain
\begin{equation}\label{lambday}
\|\la y^1\|_{L^2(1,L)}\leq \|z^1\|_{L^2(1,L)}+\|g^3\|_{L^2(1,L)}\leq \widetilde{m}_1 [ \|\mathcal{U}\|_{\mathcal{H}_1}+ \|G\|_{\mathcal{H}_1}],
\end{equation}
where $\ma{\widetilde{m}_1=\ibt{\sqrt{1+R}}}$. 
Substituting $v^1 =i\la u^1 -g^1$  and  $z^1 =i\la y^1 -g^3$ in \eqref{Q2} and \eqref{Q4} respectively we get,
\begin{equation}\label{Q56}
\la ^2u^1+\sigma(\eta u^1_x)_x=-(g^2+i\la g^1)\quad \text{and}\quad \la ^2y^1+y^1_{xx}=-(g^4+i\la g^3).
\end{equation}
Now, we are able to prove Proposition \ref{prop2'}.\\
\textbf{Proof of Proposition \ref{prop2'}.}We divide the proof into several steps.
\\
\underline{\textbf{Step 1.}}  The aim of this step is to show that  the solution $U = (u^1, v^1, y^1, z^1)\in D(\mathcal{A}_1)$ of \eqref{Q0} satisfies the following estimates
\begin{equation}\label{est5}
\int_{1}^{L}\left(|\la y^1|^2+|y^1 _x|^2\right)dx\leq m_5 \left[\|\mathcal{U}\|_{\mathcal{H}_1}\|G\|_{\mathcal{H}_1}+\|G\|_{\mathcal{H}_1}^2\right]
\end{equation}
and 
\begin{equation}\label{FinalEq4}
\int_{1}^L| z^1|^2dx\leq 2(m_5+\ibt{R+1})\left[\|\mathcal{U}\|_{\mathcal{H}_1}\|G\|_{\mathcal{H}_1}+\|G\|_{\mathcal{H}_1}^2\right],
\end{equation}
\ma{where $m_5$ will be determined below}. Let $\alpha, \beta\in \R$ and define the function $\displaystyle g_{(\alpha,\beta)}(x):=(\alpha+\beta)x-(\alpha +\beta L)$.\\
As a first step, multiply the second equation in \eqref{Q56} by $-2g_{(\alpha,\beta)}(x)\overline{y^1_x}$ and  integrate by parts over $(1,L)$. Then, taking the real part, we have
\begin{equation}\label{G1}
\begin{array}{ll}
\displaystyle
\int_{1}^{L}g_{(\alpha,\beta)}^{\prime}(x)\left(|\la y^1 |^2+|y^1_x|^2\right)dx-g_{(\alpha,\beta)}(L)\left(|\la y^1 (L)|^2+|y^1_x(L)|\right)\\
\displaystyle
+g_{(\alpha,\beta)}(1)\left(|\la y^1 (1)|^2+|y^1_x(1)^2|\right)=2\Re\left(\int_{1}^{L}g^4 g_{(\alpha,\beta)}(x)\overline{y^1_x}dx\right)-2\Re\left(i\la\int_{1}^{L}g^3_x g_{(\alpha,\beta)}(x)\overline{y^1}dx\right)\\ 
\displaystyle
-2\Re\left(i\la\int_{1}^{L}g^3 g^{\prime}_{(\alpha,\beta)}(x)\overline{y^1}dx\right)+2\Re\left[i\la g^3(L) g_{(\alpha,\beta)}(L)\overline{y^1}(L)\right]-2\Re\left[i\la g^3(1) g_{(\alpha,\beta)}(1)\overline{y^1}(1)\right].
\end{array}
\end{equation}
Taking $\beta=0$ and $\alpha=1$ in \eqref{G1}, we get
\begin{equation}\label{G2}
\begin{array}{ll}
\displaystyle
\int_{1}^{L}\left(|\la y^1 |^2+|y^1 _x|^2\right)dx=(L-1)|\la y^1(L)|^2+(L-1)|y^1 _x(L)|^2+2\Re\left(\int_{1}^{L}g^4 g_{(1,0)}(x)\overline{y^1_x}dx\right)\\ 
\displaystyle-2\Re\left(i\la\int_{1}^{L}g^3_x g_{(1,0)}(x)\overline{y^1}dx\right)
-2\Re\left(i\la\int_{1}^{L}g^3\overline{y^1}dx\right)+2\Re\left[i\la g^3(L) g_{(1,0)}(L)\overline{y^1}(L)\right].
\end{array}
\end{equation}
Now, we will  estimate the terms on the right hand side of the above equation.  To this aim, by \eqref{g3L},  \eqref{lambdayL},  \eqref{g3} and \eqref{lambday}, we have
\begin{equation}\label{est1}
\left|2\Re\left(\int_{1}^{L}g^4 g_{(1,0)}(x)\overline{y^1_x}dx\right)\right|\leq 2(L-1)\|\mathcal{U}\|_{\mathcal{H}_1}\|G\|_{\mathcal{H}_1},
\end{equation}
\begin{equation}\label{est2}
\left|2\Re\left(i\la\int_{1}^{L}g^3_x g_{(1,0)}(x)\overline{y^1}dx\right)\right|\leq 2(L-1)\|G\|_{\mathcal{H}_1}\|\la y^1 \|\leq 2\widetilde{m}_1(L-1)\left[\|\mathcal{U}\|_{\mathcal{H}_1}\|G\|_{\mathcal{H}_1}+\|G\|_{\mathcal{H}_1}^2\right],
\end{equation}
\begin{equation}\label{est3}
\left|2\Re\left(i\la\int_{1}^{L}g^3 \overline{y^1}dx\right)\right|\leq 2\|g^3 \| \|\la y^1 \|\leq 2\sqrt{1+R}\,\widetilde{m}_1 \left[\|\mathcal{U}\|_{\mathcal{H}_1}\|G\|_{\mathcal{H}_1}+\|G\|_{\mathcal{H}_1}^2\right], 
\end{equation}
\begin{equation}\label{est4}
\begin{array}{l}
\left|2\Re\left[i\la g^3(L) g_{(1,0)}(L)\overline{y^1}(L)\right]\right|\leq \ma{(L-1)^2}|\la y^1 (L)|^2+|g^3(L)|^2
\leq  m_4 \left[\|\mathcal{U}\|_{\mathcal{H}_1}\|G\|_{\mathcal{H}_1}+\|G\|_{\mathcal{H}_1}^2\right],
\end{array}
\end{equation}
where $m_4=\ma{(L-1)^2m_3+m_2}$. Thus, \ma{from the above estimations,  \eqref{y1xL} and \eqref{lambdayL} we get the following one}
\begin{equation*}
\int_{1}^{L}\left(|\la y^1 |^2+|y^1 _x|^2\right)dx\leq m_5 \left[\|\mathcal{U}\|_{\mathcal{H}_1}\|G\|_{\mathcal{H}_1}+\|G\|_{\mathcal{H}_1}^2\right],
\end{equation*}
where $\ma{m_5=(L-1)\left[Lm_3+3+2\widetilde{m}_1\right]+2\sqrt{1+R}\,\widetilde{m}_1+m_2}$.\\
From the above inequality, \eqref{Q3} \ma{and the fact that $\displaystyle{\int_{1}^L| g^3|^2dx\leq \|G\|^2_{\mathcal{S}}\leq \ibt{(1+R)}\|G\|_{\mathcal{H}_1}^2}$} we obtain
\begin{equation*}
\int_{1}^L| z^1|^2dx\leq 2\int_{1}^L|\la y^1|^2dx+2\int_{1}^L| g^3|^2dx\leq \ibt{2(m_5+R+1)}\left[\|\mathcal{U}\|_{\mathcal{H}_1}\|G\|_{\mathcal{H}_1}+\|G\|_{\mathcal{H}_1}^2\right].
\end{equation*}
\underline{\textbf{Step 2.}}  The aim of this step is to show that  the solution $U = (u^1, v^1, y^1, z^1)\in D(\mathcal{A}_1)$ of \eqref{Q0} satisfies the following estimates
 \begin{equation}\label{BDu}
|\la u^1 (1)|^2\leq m_7  \left[\|\mathcal{U}\|_{\mathcal{H}_1}\|G\|_{\mathcal{H}_1}+\|G\|_{\mathcal{H}_1}^2\right]\quad\text{and}\quad|\ibt{ (\eta u^1_x)(1)|}^2\leq m_8  \left[\|\mathcal{U}\|_{\mathcal{H}_1}\|G\|_{\mathcal{H}_1}+\|G\|_{\mathcal{H}_1}^2\right],
\end{equation}
where $m_7$ and $m_8$ will be determined below.\\
Taking $\alpha=0$ and $\beta=1$ in \eqref{G1}, we get
\begin{equation}\label{est6}
\begin{array}{ll}
\displaystyle
(L-1)|\la y^1 (1)|^2+(L-1)|y_x(1)|^2=\int_{1}^{L}\left(|\la y^1 |^2+|y^1_x|^2\right)dx-2\Re\left(\int_{1}^{L}g^4 g_{(0,1)}(x)\overline{y^1}_xdx\right)\\ 
\displaystyle+2\Re\left(i\la\int_{1}^{L}g^3_x g_{(0,1)}(x)\overline{y^1}dx\right)
+2\Re\left(i\la\int_{1}^{L}g^3\overline{y^1}dx\right)+2\Re\left[i\la g^3(1) g_{(0,1)}(1)\overline{y^1}(1)\right].
\end{array}
\end{equation}
To estimate the last term in the right hand side of the above equation, we use the Young inequality and \ma{\eqref{g3L1}}, obtaining
\begin{equation*}\label{est7}
\begin{array}{l}
\ma{\displaystyle 
\left|2\Re\left[i\la g^3(1) g_{(0,1)}(1)\overline{y^1}(1)\right]\right|\leq (L-1)\left(\frac{1}{2}|\la y^1(1)|^2+2|g^3(1)|^2\right)\leq \frac{1}{2}(L-1)|\la y^1(1) |^2+}\\
\displaystyle 
\ma{2(L-1)m_1^2\|G\|^2_{\mathcal{H}_1}\leq \frac{1}{2}(L-1)|\la y^1(1) |^2+2(L-1)m_1^2\left[\|\mathcal{U}\|_{\mathcal{H}_1}\|G\|_{\mathcal{H}_1}+\|G\|_{\mathcal{H}_1}^2\right]}.
\end{array}
\end{equation*}
Now, using the above inequality, observing that estimates \eqref{est1},\eqref{est2}, and \eqref{est3} are true also for $g_{(0,1)}(x)$ and using \eqref{est5}  in \eqref{est6}, we obtain
\begin{equation}
\frac{1}{2}(L-1)|\la y^1 (1)|^2+(L-1)|y^1 _x(1)|^2\leq m_6 \left[\|\mathcal{U}\|_{\mathcal{H}_1}\|G\|_{\mathcal{H}_1}+\|G\|_{\mathcal{H}_1}^2\right],
\end{equation}
where $m_6=\ibt{m_5+2\sqrt{1+R}\widetilde{m_1}+2(L-1)(1+\widetilde{m_1}+m_1^2)}$.
Therefore, 
\begin{equation}\label{BDy}
|\la y^1 (1)|^2\leq m_7  \left[\|\mathcal{U}\|_{\mathcal{H}_1}\|G\|_{\mathcal{H}_1}+\|G\|_{\mathcal{H}_1}^2\right]\quad\text{and}\quad|y^1 _x(1)|^2\leq m_8  \left[\|\mathcal{U}\|_{\mathcal{H}_1}\|G\|_{\mathcal{H}_1}+\|G\|_{\mathcal{H}_1}^2\right],
\end{equation}
with $\displaystyle m_7=\frac{2}{L-1}m_6$ and $\displaystyle m_8=\frac{1}{L-1}m_6$.
Now, using the  transmission conditions $\ma{y^1(1)=u^1(1)}$ and $\ma{y^1_x(1)=(\eta u^1_x)(1)}$, we obtain \eqref{BDu}.\\
\underline{\textbf{Step 3.}}  The aim of this step is to show that  the solution $U = (u^1, v^1, y^1, z^1)\in D(\mathcal{A}_1)$ of \eqref{Q0} satisfies the following estimates
\begin{equation}\label{FinalEq1}
\int_0^{1}\frac{1}{\sigma}|\la u^1|^2dx\leq m_{17} \left(1+\frac{1}{\la^2}\right)  \left[\|\mathcal{U}\|_{\mathcal{H}_1}\|G\|_{\mathcal{H}_1}+\|G\|_{\mathcal{H}_1}^2\right]
\end{equation}
and
\begin{equation}\label{FinalEq2}
\int_0^{1}\eta|u^1_x|^2dx\leq m_{18} \left(1+\frac{1}{\la^2}\right)  \left[\|\mathcal{U}\|_{\mathcal{H}_1}\|G\|_{\mathcal{H}_1}+\|G\|_{\mathcal{H}_1}^2\right],
\end{equation}
where $m_{17}$ and $m_{18}$ are positive constants to be determined below.\\
Multiplying the first equation in \eqref{Q56}  by $\displaystyle\frac{K}{2\sigma}\overline{u^1}$, integrating over $(0,1)$ and taking the real part, we get
\begin{equation}\label{edit1}
\begin{array}{ll}
\displaystyle
\frac{K}{2}\int_0^1\frac{1}{\sigma}|\la u^1|^2dx-\frac{K}{2} \int_0^1 \eta |u^1_x|^2dx+\frac{K}{2}\Re\left(\eta(1)u^1_x(1)\overline{u^1}(1)\right)-\frac{K}{2}\lim_{x\to 0} \eta u^1_x\overline{u^1}\\
\displaystyle=-\Re\left(\frac{K}{2}\int_0^1\frac{1}{\sigma}g^2\overline{u^1}dx\right)
-\Re\left(i\frac{K}{2}\int_0^1\frac{1}{\sigma} g^1\la \overline{u^1}dx\right)
\end{array}
\end{equation}
Now, multiplying the first equation in \eqref{Q56} by $\displaystyle-2\frac{x}{\sigma}\overline{u^1_x}$, integrating over $(0,1)$ and taking the real part, we get
\begin{equation}
\begin{array}{ll}
\displaystyle
\int_0^{1}\left(\frac{x}{\sigma}\right)^{\prime}|\la u^1|^2dx +\int_0^{1} \eta|u^1_x|^2dx-\int_0^{1}x\frac{b}{a}\eta|u^1_x|^2dx-\eta(1)|u^1_x(1)|^2-\frac{1}{\sigma(1)}|\la u^1(1)|^2\\
\displaystyle
+\la ^2\lim_{x\to 0} \frac{x}{\sigma}|u^1|^2=2\Re\left(\int_0^{1} g^2\frac{x}{\sigma}\overline{u^1_x}dx\right)+2\Re\left(i\la \int_0^{1} g^1\frac{x}{\sigma}\overline{u^1_x}dx\right).
\end{array}
\end{equation}
Using Lemma \ref{Lemma0} and the fact that  $\displaystyle\left(\frac{x}{\sigma}\right)^{\prime}=\ma{\frac{1}{\sigma}\left(1-x\left(\frac{a'-b}{a}\right)\right)}$, the above equation becomes
\begin{equation}\label{EE0}
\begin{array}{ll}
\displaystyle
\int_0^{1}\left(\frac{1}{\sigma}|\la u^1|^2+\eta|u^1_x|^2\right)dx=\int_0^{1}\frac{x}{\sigma}\left(\frac{a^{\prime}-b}{a}\right)|\la u^1|^2dx+\int_0^{1}x\frac{b}{a}\eta|u^1_x|^2dx+\eta(1)|u^1_x(1)|^2\\
\displaystyle+\frac{1}{\sigma(1)}|\la u^1(1)|^2
+2\Re\left(\int_0^{1} g^2\frac{x}{\sigma}\overline{u^1_x}dx\right)+2\Re\left(i\la \int_0^{1} g^1\frac{x}{\sigma}\overline{u^1_x}dx\right)
\end{array}
\end{equation}
Adding \eqref{edit1} and \eqref{EE0}, we get
\begin{equation}
\begin{array}{ll}
\displaystyle
\left(1+\frac{K}{2}\right)\int_0^{1}\frac{1}{\sigma}|\la u^1|^2+\left(1-\frac{K}{2}\right)\int_0^{1}\eta|u^1_x|^2dx=\int_0^{1}\frac{x}{\sigma}\left(\frac{a^{\prime}-b}{a}\right)|\la u^1|^2dx\\
\displaystyle+\int_0^{1}x\frac{b}{a}\eta|u^1_x|^2dx

+\eta(1)|u^1_x(1)|^2+\frac{1}{\sigma(1)}|\la u^1(1)|^2
+2\Re\left(\int_0^{1} g^2\frac{x}{\sigma}\overline{u^1_x}dx\right)+2\Re\left(i\la \int_0^{1} g^1\frac{x}{\sigma}\overline{u^1_x}dx\right)\\
\displaystyle-\frac{K}{2}\Re\left(\eta(1)u^1_x(1)\overline{u^1}(1)\right)

-\Re\left(\frac{K}{2}\int_0^1\frac{1}{\sigma}g^2\overline{u^1}dx\right)
-\Re\left(i\frac{K}{2}\int_0^1\frac{1}{\sigma} g^1\la \overline{u^1}dx\right).
\end{array}
\end{equation}
Now, we need to estimate the terms on the left hand side of the above equation. \\
$\bullet$ 
As a first step, consider the term $\displaystyle 2\Re\left(\int_0^{1} g^2\frac{x}{\sigma}\overline{u^1_x}dx\right)$: using the Cauchy-Schwarz inequality and the monotonicity of the function $\displaystyle\frac{x}{\sqrt{a}}$ in $(0,1]$, we get
\begin{equation}\label{EE1}
\left|2\Re\left(\int_0^{1} g^2\frac{x}{\sigma}\overline{u^1_x}dx\right)\right|\leq 2\Re\int_0^{1} \frac{x}{\ma{\sqrt{\sigma}}}|g^2|\frac{\sqrt{\eta}}{\sqrt{a}}|u^1_x|dx\leq m_9\left[\|\mathcal{U}\|_{\mathcal{H}_1}\|G\|_{\mathcal{H}_1}+\|G\|_{\mathcal{H}_1}^2\right],
\end{equation}
where $m_9=\frac{2}{\sqrt{a(1)}} $.\\
$\bullet$ Now, for the term $\displaystyle 2\Re\left(i\la \int_0^{1} g^1\frac{x}{\sigma}\overline{u^1_x}dx\right)$, we have
\begin{equation}\label{Q12}
\begin{array}{ll}
\displaystyle
2\Re\left(i\la \int_0^{1} g^1\frac{x}{\sigma}\overline{u^1_x}dx\right)=-2\Re\left(i\la \int_0^{1}\frac{x}{\sigma} g^1_x\overline{u^1}dx\right)-2\Re\left(i\la \int_0^{1} \left(\frac{x}{\sigma}\right)^{\prime}g^1\overline{u^1}dx\right)\\
\displaystyle+2\Re\left(i\la g^1(1)\frac{1}{\sigma(1)}\overline{u^1}(1)\right)-2\lim_{x\to 0}\Re\left(i\la g^1\frac{x}{\sigma}\overline{u^1}\right).
\end{array}
\end{equation}
Estimating the first term on the right hand side of \eqref{Q12}, we get
\begin{equation}\label{EstQ2}
\left|2\Re\left(i\la \int_0^{1}\frac{x}{\sigma} g^1_x\overline{u^1}dx\right)\right|\leq \frac{2}{\sqrt{a(1)}}\|\la u^1\|_{\frac{1}{\sigma}}\|G\|_{\mathcal{H}_1}\leq m_{10} \left[\|\mathcal{U}\|_{\mathcal{H}_1}\|G\|_{\mathcal{H}_1}+\|G\|_{\mathcal{H}_1}^2\right],
\end{equation}
with $m_{10}=\frac{2}{\sqrt{a(1)}}m_0$.
Now for the second term on the right hand side in \eqref{Q12}, using \eqref{lambdau1}, we have
\begin{equation}\label{Q13}
2\Re\left(i\la \int_0^{1} \left(\frac{x}{\sigma}\right)^{\prime}g^1\overline{u^1}dx\right)=2\Re\left(i\la \int_0^{1} \frac{1}{\sigma}g^1\overline{u^1}dx\right)+2\Re\left(i\la \int_0^{1} \frac{x}{\sigma}\left(\frac{a^{\prime}-b}{a}\right)g^1\overline{u^1}dx\right).
\end{equation}
Using \eqref{lambdau1} and the Hardy-Poincaré inequality, the last two terms in the above equation can be estimated  in the following way
\begin{equation*}
\begin{array}{ll}
\displaystyle
\left|2\Re\left(i\la \int_0^{1} \frac{1}{\sigma}g^1\overline{u^1}dx\right)\right|\leq 2 \|\la u^1\|_{\frac{1}{\sigma}}\|\frac{1}{\sqrt{\sigma}}g^1\|\leq 2m_0(\|\mathcal{U}\|_{\mathcal{H}_1}+\|G\|_{\mathcal{H}_1}) \widetilde{m}_0\|\ma{\sqrt{\eta}} g_x^1\|\\
\displaystyle
\leq m_{11} \left[\|\mathcal{U}\|_{\mathcal{H}_1}\|G\|_{\mathcal{H}_1}+\|G\|_{\mathcal{H}_1}^2\right]
\end{array}
\end{equation*}
and 
\begin{equation*}
\left|2\Re\left(i\la \int_0^{1} \frac{x}{\sigma}\left(\frac{a^{\prime}-b}{a}\right)g^1\overline{u^1}dx\right)\right|\leq 2N_1\int_0^{1} \frac{1}{\sqrt{\sigma}} |\la u^1| \frac{1}{\sqrt{\sigma}} |g^1|dx\leq m_{12}  \left[\|\mathcal{U}\|_{\mathcal{H}_1}\|G\|_{\mathcal{H}_1}+\|G\|_{\mathcal{H}_1}^2\right],
\end{equation*}
where $\displaystyle m_{11}=2m_0 \widetilde{m}_0 $ and $\displaystyle m_{12}=2m_0N_1\widetilde{m}_0$.\\
Then, thanks to the above two inequalities and \eqref{Q13}, we get
\begin{equation}\label{EstQ4}
\left|2\Re\left(i\la \int_0^{1} \left(\frac{x}{\sigma}\right)^{\prime}g^1\overline{u^1}dx\right)\right| \leq m_{13}  \left[\|\mathcal{U}\|_{\mathcal{H}_1}\|G\|_{\mathcal{H}_1}+\|G\|_{\mathcal{H}_1}^2\right],
\end{equation}
where $m_{13}=m_{11}+m_{12}$.
Now, consider the term $\displaystyle 2\Re\left(i\la g^1(1)\frac{1}{\sigma(1)}\overline{u^1}(1)\right)$. Using \ma{the conditions $g^1(1)=g^3(1)$, $u^1(1)=y^1(1)$ and the inequalities \eqref{g3L1} and \eqref{BDu}}, we get
\begin{equation}\label{EstQ3}
\left|2\Re\left(i\la g^1(1)\frac{1}{\sigma(1)}\overline{u^1}(1)\right)\right|\leq\frac{1}{\sigma(1)}\left(|g^1(1)|^2+|\la u^1(1)|^2\right) 
\leq m_{14} \left[\|\mathcal{U}\|_{\mathcal{H}_1}\|G\|_{\mathcal{H}_1}+\|G\|_{\mathcal{H}_1}^2\right],
\end{equation}
where $\ma{\displaystyle m_{14}=\frac{1}{\sigma(1)}(m_7+m_1^2)}$.\\
By  \eqref{Q12}, \eqref{EstQ2}, \eqref{EstQ4},   \eqref{EstQ3} and Lemma \ref{Lemma0}, we get
\begin{equation}\label{EE2}
\left|2\Re\left(i\la \int_0^{1} g^1\frac{x}{\sigma}\overline{u^1_x}dx\right)\right|\leq m_{15}\left[\|\mathcal{U}\|_{\mathcal{H}_1}\|G\|_{\mathcal{H}_1}+\|G\|_{\mathcal{H}_1}^2\right]
\end{equation}
where $m_{15}=m_{10}+m_{13}+m_{14}$.\\

By using Cauchy-Schwarz and Young's inequality and \eqref{BDu}, we get \
\begin{equation}\label{edit2}
\begin{array}{ll}
\displaystyle \left|\Re\left(\frac{K}{2}\int_0^1\frac{1}{\sigma}g^2\overline{u^1}dx\right)\right|\leq \frac{K}{2} \widetilde{m}_0 \left[\|\mathcal{U}\|_{\mathcal{H}_1}\|G\|_{\mathcal{H}_1}+\|G\|_{\mathcal{H}_1}^2\right],
\end{array}
\end{equation}
\begin{equation}\label{edit3}
\begin{array}{ll}
\displaystyle \left|\Re\left(i\frac{K}{2}\int_0^1\frac{1}{\sigma} g^1\la \overline{u^1}dx\right)\right| \leq \frac{K}{2} \widetilde{m}_0 m_0 \left[\|\mathcal{U}\|_{\mathcal{H}_1}\|G\|_{\mathcal{H}_1}+\|G\|_{\mathcal{H}_1}^2\right]
\end{array}
\end{equation}
and 
\begin{equation}\label{edit4}
\begin{array}{ll}
\displaystyle 
\left|\frac{K}{2}\Re\left(\eta(1)u^1_x(1)\overline{u^1}(1)\right)\right|\leq  \widetilde{m}_7 \left(1+\frac{1}{\la^2}\right)  \left[\|\mathcal{U}\|_{\mathcal{H}_1}\|G\|_{\mathcal{H}_1}+\|G\|_{\mathcal{H}_1}^2\right]
\end{array}
\end{equation}
where $\displaystyle \widetilde{m}_7=\max\left(\frac{m_8K^2}{8},\frac{m_7}{2}\right)$.
Hence, using \eqref{BDu}, \eqref{EE1},  \eqref{EE2}, \eqref{edit2}, \eqref{edit3} and \eqref{edit4} in \eqref{EE0}, we obtain
\begin{equation}\label{FinalEq}
\int_0^{1}(1+\frac{K}{2}-N_1)\frac{1}{\sigma}|\la u^1|^2dx+\int_0^{1}(1-\frac{K}{2}-N_2)\eta|u^1_x|^2dx\leq m_{16}\left(1+\frac{1}{\la^2}\right)  \left[\|\mathcal{U}\|_{\mathcal{H}_1}\|G\|_{\mathcal{H}_1}+\|G\|_{\mathcal{H}_1}^2\right],
\end{equation}
where $\displaystyle m_{16}=\max\left(m_9+m_{15}+\frac{1}{\sigma(1)}m_7+\frac{1}{\eta(1)}m_8+\frac{K}{2} \widetilde{m}_0+\frac{K}{2} \widetilde{m}_0 m_0, \widetilde{m}_7\right)$. Thus, we can conclude that \eqref{FinalEq1} and \eqref{FinalEq2} are satisfied with $m_{17}=m_{16}(1+\frac{K}{2}-N_1)^{-1}$ and $m_{18}=m_{16}(1-\frac{K}{2}-N_2)^{-1}$.\\
\underline{\textbf{Step 4.}} The aim of this step is to prove \eqref{mainresult}. \\
From \eqref{Q1},  and by using the Young's inequality,  \eqref{HP} and \eqref{FinalEq1},  we obtain
\begin{equation}\label{FinalEq3}
\begin{array}{ll}\displaystyle
\int_0^{1}\frac{1}{\sigma}| v^1|^2dx\leq 2\int_0^{1}\frac{1}{\sigma}|\la u^1|^2dx+2C_{HP}\max_{x\in[0,1]}\frac{1}{\eta}\int_0^{1}\eta| g^1_x|^2dx\\
\displaystyle
\leq 2\max\{m_{17},\widetilde{m}_0^2\}\left(1+\frac{1}{\la^2}\right) \left[\|\mathcal{U}\|_{\mathcal{H}_1}\|G\|_{\mathcal{H}_1}+\|G\|_{\mathcal{H}_1}^2\right].
\end{array}
\end{equation}
Moreover, by \eqref{est5}, \eqref{FinalEq4}, \eqref{FinalEq2},  and \eqref{FinalEq3},  we obtain
\begin{equation}\label{Editfinal}
\|\mathcal{U}\|^2_{\mathcal{H}_1}\leq m_{19} \left(1+\frac{1}{\la^2}\right)  \left[\|\mathcal{U}\|_{\mathcal{H}_1}\|G\|_{\mathcal{H}_1}+\|G\|_{\mathcal{H}_1}^2\right],
\end{equation}
where $\displaystyle m_{19}=3m_5+2(R+1)+m_{18}+2\max\{m_{17},\widetilde{m}_0^2\}$. \\
We have $\displaystyle m_{19}\left(1+\frac{1}{\la^2}\right)\|\mathcal{U}\|_{\mathcal{H}_1}\|G\|_{\mathcal{H}_1}\leq \frac{1}{4}\|\mathcal{U}\|_{\mathcal{H}_1}^2+ m_{19}^2\left(1+\frac{1}{\la^2}\right)^2\|G\|_{\mathcal{H}_1}^2$. 
Since $\displaystyle m_{19}^2\left(1+\frac{1}{\la^2}\right)^2+m_{19}\left(1+\frac{1}{\la^2}\right) \leq \left(m_{19}+\frac{1}{2}+\frac{m_{19}}{\la^2}\right)^2$, then \eqref{Editfinal} becomes
\begin{equation*}
\|\mathcal{U}\|_{\mathcal{H}_1}^2\leq \frac{4}{3}\left(m_{19}+\frac{1}{2}+\frac{m_{19}}{\la^2}\right)^2 \|G\|_{\mathcal{H}_1}^2\leq  \frac{4}{3}\left(m_{19}+\frac{1}{2}\right)^2\left(1+\frac{1}{\la^2}\right)^2 \|G\|_{\mathcal{H}_1}^2.
\end{equation*}
Thus, $\displaystyle\|\mathcal{U}\|_{\mathcal{H}_1}\leq m \left(1+\frac{1}{\la^2}\right)  \|G\|_{\mathcal{H}_1}$, with $m=\frac{1}{\sqrt{3}}(2m_{19}+1)$ and the proof of Proposition \ref{prop2'} is completed.
\hfill$\square$

\noindent \textbf{Proof of Theorem \ref{Exponential Stab2}.}
The proof of the conditions \eqref{R1} and \eqref{R2}, is similar to the proof of \eqref{H1} and \eqref{H2} respectively in Theorem \ref{Exponential Stab}.And finally, we get the conclusion applying the Huang-Pr\"{u}ss (see \cite{Huang01, pruss01}) Theorem.\hfill$\square$

\begin{rem}\label{RemarkG}
Thanks to the previous result we can improve the result given in \cite{fragnelli2022linear}. Indeed  if Hypothesis 4 in \cite{fragnelli2022linear} is satisfied then the two assumptions taken on $N_1$ and $N_2$ in Hypothesis   \ref{HypothesisExp2} satisfies $N_1<1+\frac{K}{2}$ and $N_2<1-\frac{K}{2}$ .
\end{rem}

\section{Conclusion} 
In this paper we study two systems. The first one is a degenerate wave equation in non divergence form with a drift term and localized internal degenerate damping. We prove the well-posedness of this system and the exponential stability, without any restriction on the degeneracy of the damping  coefficient (i.e $\alpha_1, \alpha_2\geq 0$).
The second system we consider consists of coupled degenerate and non-degenerate wave equations connected through transmission conditions with a  drift term and a single boundary damping at the endpoint of the non-degenerate wave equation. Also for this system we prove well-posedness and exponential stability. 

\section*{Acknowledgment}
The authors would like to thank the Project Horizon Europe Seeds {\it STEPS: STEerability and 
controllability of PDES in Agricultural and Physical models} and the Project {\it PEPS JCJC-FMHF- FR2037}.\\
Genni Fragnelli is also a member of the  {\it Gruppo Nazionale per l'Analisi Ma\-te\-matica, la Probabilit\`a e le loro Applicazioni (GNAMPA)} of the Istituto Nazionale di Alta Matematica (INdAM) and a member of {\it UMI ``Modellistica Socio-Epidemiologica (MSE)''}. She is partially supported by the GNAMPA project 2023 {\em Modelli differenziali per l'evoluzione del clima e i suoi impatti} and by FFABR {\it Fondo per il finanziamento delle attivit\`a base di ricerca} 2017.

\end{document}